\pgfplotsset{compat=1.9}
\renewcommand{\tocsection}[3]{%
  \indentlabel{\@ifnotempty{#2}{\bfseries\ignorespaces#1 #2\quad}}\bfseries#3}
\renewcommand{\tocsubsection}[3]{%
  \indentlabel{\@ifnotempty{#2}{\ignorespaces#1 #2\quad}}#3}
\newcommand\@dotsep{4.5}
\def\@tocline#1#2#3#4#5#6#7{\relax
  \ifnum #1>\c@tocdepth 
  \else
    \par \addpenalty\@secpenalty\addvspace{#2}%
    \begingroup \hyphenpenalty\@M
    \@ifempty{#4}{%
      \@tempdima\csname r@tocindent\number#1\endcsname\relax
    }{%
      \@tempdima#4\relax
    }%
    \parindent\z@ \leftskip#3\relax \advance\leftskip\@tempdima\relax
    \rightskip\@pnumwidth plus1em \parfillskip-\@pnumwidth
    #5\leavevmode\hskip-\@tempdima{#6}\nobreak
    \leaders\hbox{$\m@th\mkern \@dotsep mu\hbox{.}\mkern \@dotsep mu$}\hfill
    \nobreak
    \hbox to\@pnumwidth{\@tocpagenum{\ifnum#1=1\bfseries\fi#7}}\par
    \nobreak
    \endgroup
  \fi}
\renewcommand\csname r@tocindent0\endcsname{0pt}
\def\l@subsection{\@tocline{2}{0pt}{2.5pc}{5pc}{}}
\newcommand{\N}{{\mathbb N}}
\newcommand{\Z}{{\mathbb Z}}
\newcommand{\Q}{{\mathbb Q}}
\newcommand{\C}{{\mathbb C}}
\newcommand{\R}{{\mathbb R}}
\newcommand{\V}{{\mathbb V}}
\DeclareMathAlphabet{\pazocal}{OMS}{zplm}{m}{n}
\newcommand{\calA}{{\pazocal A}}
\newcommand{\calB}{{\pazocal B}}
\newcommand{\calC}{{\pazocal C}}
\newcommand{\calG}{{\pazocal G}}
\newcommand{\calH}{{\pazocal H}}
\newcommand{\calM}{{\pazocal M}}
\newcommand{\calN}{{\pazocal N}}
\newcommand{\calO}{{\pazocal O}}
\newcommand{\calP}{{\pazocal P}}
\newcommand{\calR}{{\pazocal R}}
\newcommand{\calS}{{\pazocal S}}
\newcommand{\calU}{{\pazocal U}}
\newcommand{\pazB}{{\mathcal B}}
\newcommand{\pazF}{{\mathcal F}}
\newcommand{\pazX}{{\mathcal X}}
\newcommand{\gotL}{{\mathfrak L}}
\newcommand{\gotR}{{\mathfrak R}}
\newcommand{\ra}{\rightarrow}
\newcommand{\act}{\curvearrowright}
\newcommand{\ol}{\overline}
\newcommand{\ul}{\underline}
\newcommand{\wh}{\widehat}
\newcommand{\myRule}[3][white]{\textcolor{#1}{\rule{#2}{#3}}}
\newcommand{\bigzero}{\mbox{\normalfont\Large\bfseries 0}}
\newcommand{\tr}{\operatorname{tr}}
\newcommand{\Tr}{\operatorname{Tr}}
\newcommand{\rk}{\operatorname{rk}}
\newcommand{\Rk}{\operatorname{Rk}}
\newcommand{\xddots}{%
  \raise 4pt \hbox {.}
  \mkern 6mu
  \raise 1pt \hbox {.}
  \mkern 6mu
  \raise -2pt \hbox {.}
}
\numberwithin{equation}{section}
\theoremstyle{plain}
\newtheorem{theorem}{Theorem}[section]
\newtheorem*{theorem*}{Theorem}
\newtheorem{lemma}[theorem]{Lemma}
\newtheorem{proposition}[theorem]{Proposition}
\theoremstyle{definition}
\newtheorem{definition}[theorem]{Definition}
\newtheorem*{definition*}{Definition}
\newtheorem{remark}[theorem]{Remark}
\newtheorem*{remark*}{Remark}
\newtheorem{question}[theorem]{Question}
\newtheorem*{question*}{Question}
\newtheorem*{assumption*}{Assumption}
\newtheorem*{sac*}{Strong Atiyah Conjecture (\textbf{SAC})}
\newtheorem*{bsac*}{Strong Atiyah Conjecture, bounded case (\textbf{BSAC})}
\title[$L^2$-Betti numbers of the lamplighter]{$L^2$-Betti numbers arising from the lamplighter group}
\author{Pere Ara}
\address[P. Ara]{Departament de Matem\`atiques, Edifici Cc, Universitat Aut\`onoma de Barcelona, 08193 Cerdanyola del Vall\`es (Barcelona), Spain.}
\address[]{Centre de Recerca Matem\`atica, Edifici Cc, Campus de Bellaterra, 08193 Cerdanyola del Vall\`es (Barcelona), Spain.}
\email{para@mat.uab.cat}
\author{Joan Claramunt}
\address[J. Claramunt]{Department of Mathematics and Statistics, Lancaster University, LA1 4YW Lancaster, United Kingdom.}
\address[]{Departamento de Matem\'atica, Universidade Federal de Santa Catarina, 88040-900 Florian\'opolis SC, Brazil.}
\email{j.claramunt@lancaster.ac.uk}
\subjclass[2010]{Primary 20C07; Secondary 20F65, 16E50}
\keywords{$\ell^2$-Betti number, Atiyah conjecture, rank function, lamplighter, odometer}
\thanks{Both authors were partially supported by DGI-MINECO-FEDER through the grants MTM2014-53644-P and MTM2017-83487-P, and by the Generalitat de Catalunya through the grant 2017-SGR-1725. The second named author was also partially supported by DGI-MINECO-FEDER through the grant BES-2015-071439, by the research funding Brazilian agency CAPES and by the ERC Starting Grant “Limits of Structures in Algebra and Combinatorics” No. 805495}
\date{\today}
\begin{document}

\pagestyle{plain}
 
\begin{abstract}
We apply a construction developed in a previous paper by the authors in order to obtain a formula which enables us to compute $\ell^2$-Betti numbers coming from a family of group algebras representable as crossed product algebras. As an application, we obtain a whole family of irrational $\ell^2$-Betti numbers arising from the lamplighter group algebra $\Q[\Z_2 \wr \Z]$. This procedure is constructive, in the sense that one has an explicit description of the elements realizing such irrational numbers. This extends the work made by Grabowski, who first computed irrational $\ell^2$-Betti numbers from the algebras $\Q[\Z_n \wr \Z]$, where $n \geq 2$ is a natural number. We also apply the techniques developed to the generalized odometer algebra $\calO(\overline{n})$, where $\overline{n}$ is a supernatural number. We compute its $*$-regular closure, and this allows us to fully characterize the set of $\calO(\overline{n})$-Betti numbers.
\end{abstract}

\maketitle

\tableofcontents

\normalsize

\section{Introduction and the Atiyah problem}\label{section-introduction.Atiyah}



In \cite{Ati} Atiyah (in collaboration with Singer) extended the Atiyah-Singer Index Theorem \cite{AS} to the non-compact setting; more concretely, to the arena of non-compact manifolds $M$ equipped with a free cocompact action of a discrete group $G \act M$ (i.e. with $M/G$ compact). In this situation, the space of solutions of the equation $Df = 0$, where $D$ is an elliptic differential operator, becomes an infinite-dimensional Hilbert space $\calH_D$, and the concept of analytic index is extended by using the theory of von Neumann algebras. This theory provides a well-defined notion of dimension for $\calH_D$, called the \textit{von Neumann dimension} of $\calH_D$. This new framework led Atiyah to define the so-called \textit{$\ell^2$-Betti numbers} as von Neumann dimensions of $\ell^2$-cohomology Hilbert modules \cite{Luck}.



In his paper \cite{Ati} Atiyah observed that, in case $G$ is a finite group, the $\ell^2$-Betti numbers coincide, \emph{modulo rescaling by $|G|$}, with the previously-known Betti numbers. Thus in this situation, they are in fact \textit{rational} numbers, and indeed Atiyah posed the following question, which is nowadays commonly known as the \textit{Atiyah Conjecture}.

\begin{question*}[Atiyah]\label{conjecture-atiyah}
Is it possible to obtain irrational values of $\ell^2$-Betti numbers?
\end{question*}

Although they were defined analytically, $\ell^2$-Betti numbers can be defined in an algebraic setting, purely in terms of the group $G$ and without explicit mention of the manifold $M$ (see Section \ref{subsection-Betti.numbers} for a detailed explanation). In this line, a real positive number $r$ is said to be an $\ell^2$-Betti number arising from $G$, with coefficients in a fixed subfield $K \subseteq \C$ closed under complex conjugation, whenever there exists a matrix operator $T \in M_n(KG)$ such that the von Neumann dimension of $\ker \, T$ is equal to $r$. Following this algebraic point of view, stronger questions --in relation with the original Atiyah question-- were formulated over the years. One of its strongest versions is the so-called \textit{Strong Atiyah Conjecture}, considered by Schick in \cite[Definition 1]{Schick}.

\begin{sac*}\label{conjecture-strong.atiyah}
The set of $\ell^2$-Betti numbers arising from $G$ with coefficients in $K$ is contained in the subgroup $\sum_H \frac{1}{|H|}\Z$ of $\mathbb Q$, where $H$ ranges over the finite subgroups of $G$.
\end{sac*}

The lamplighter is precisely the first counterexample to the SAC, as proved by R. I. Grigorchuk and A. \.{Z}uk \cite{GZ,GLSZ}, see also \cite{DiSc} for a more elementary proof. More recently, the original Atiyah's question has been solved in the negative, and some authors, including Austin \cite{Aus}, Grabowski \cite{Gra14,Gra16} and Pichot, Schick and Żuk \cite{PSZ} have found examples of groups having irrational values of $\ell^2$-Betti numbers. In particular, Grabowski shows in \cite{Gra16} that there are \textit{transcendental} numbers that appear as $\ell^2$-Betti numbers of the lamplighter group. Nevertheless, the following version of the SAC is still open.

\begin{bsac*}\label{conjecture-bounded.strong.atiyah}
Suppose that there exists an upper bound for the orders of the finite subgroups of $G$. Then the SAC holds for $G$.
\end{bsac*}

In particular it is open for torsion-free groups, which gives a generalization of the famous Kaplansky's zero-divisor Conjecture. For an extensive study of Atiyah's original question and strong versions of it, see \cite{DLMSY,Jaik-survey,Jaik,JL2,Lin91,Lin93,Lin08,LLS,LS07,LS12,Luck,LL18}. 

During the last 40 years, the importance of both the SAC and BSAC has increased due to the wide variety of their consequences in several branches of mathematics. For instance, in differential geometry and topology the SAC has connections with the Hopf Conjecture on the possible sign of the Euler characteristic of a Riemannian manifold (\cite{Davis}, see also \cite[Chapters 10 and 11]{Luck}). In group theory, the SAC has implications relating group-theoretic properties of a group and its homological dimension. In particular, it is known that if a group has homological dimension one and satisfies the SAC, then it must be locally free \cite{KLL}. 

One of the main problems (following these lines) is to actually compute the whole set $\calC(G,K)$ of $\ell^2$-Betti numbers arising from $G$ with coefficients in $K$. In this paper, we uncover a portion of this set for the lamplighter group $\Gamma$ with coefficients in $\Q$. The group $\Gamma$ is defined to be the semidirect product of $\Z$ copies of the finite group $\Z_2$ by $\Z$, i.e.
$$\Gamma = \Big( \bigoplus_{i \in \Z} \Z_2 \Big) \rtimes_{\rho} \Z,$$
whose automorphism $\rho$ implementing the semidirect product is the well-known Bernoulli shift. A precursor of our work here can be found in the paper \cite{AG} by Ara and Goodearl. In that article, the authors attack this problem algebraically, by trying to uncover the structure of what is called the \textit{$*$-regular closure} $\calR_{K \Gamma}$ of the lamplighter group algebra $K \Gamma$ inside $\calU(\Gamma)$, the algebra of unbounded operators affiliated to the group von Neumann algebra $\calN(\Gamma)$ or, more algebraically, the classical ring of quotients of $\calN(\Gamma)$. The precise connection between the $*$-regular closure $\calR_{K\Gamma}$ and the set $\calC(\Gamma,K)$ has been provided recently by a result of Jaikin-Zapirain \cite{Jaik}, which states that the rank function on $\calR_{K\Gamma}$, obtained by restricting the canonical rank function on $\calU(\Gamma)$, is completely determined by its values on matrices over $K\Gamma$. Since $\calR_{K\Gamma}$ is $*$-regular, this can be rephrased in the form
$$\phi(K_0(\calR_{K\Gamma})) = \calG(\Gamma,K),$$
where $\phi$ is the state on $K_0(\calR_{K\Gamma})$ induced by the restriction of the rank function on $K\Gamma$, and $\calG(\Gamma,K)$ is the subgroup of $\R$ generated by $\calC(\Gamma,K)$ \cite[Proposition 4.1]{AC2} (See e.g. \cite{Ros} for the definition of the $K_0$-group of a ring). The algebra $\calR_{K\Gamma}$, together with tight connections with $\calC(\Gamma,K)$, has been recently studied by the authors in \cite{AC2}.

A key observation in light of the development of the work presented in this paper is to realize the lamplighter group algebra as a $\Z$-crossed product $*$-algebra
$$K\Gamma \cong C_K(X) \rtimes_T \Z$$
through the Fourier transform. This method was first (somewhat implicitly) used in \cite{DiSc}, and very explicitly in \cite{Aus}. Here $X = \{0,1\}^{\Z}$ is the Pontryagin dual of the group $\bigoplus_{i \in \Z} \Z_2$, topologically identified with the Cantor set, $C_K(X)$ is the set of locally constant functions $f : X \ra K$, and $T : X \ra X$ is the homeomorphism of $X$ implemented by the Bernoulli shift. There is a natural measure $\mu$ on $X$, namely the usual product measure, having taken the $\big(\frac{1}{2},\frac{1}{2}\big)$-measure on each component $\{0,1\}$. This measure is ergodic, full and $T$-invariant, so we can apply the techniques developed in \cite{AC} to study the $\Z$-crossed product algebra $\calA := C_K(X) \rtimes_T \Z$ by giving `$\mu$-approximations' of the space $X$, which at the level of the algebra $\calA$ correspond to certain `approximating' $*$-subalgebras $\calA_n \subseteq \calA$ (see \cite[Section 4.1]{AC}, also \cite[Section 6]{AC2}). By using \cite[Theorem 4.7 and Proposition 4.8]{AC}, we obtain a canonical faithful Sylvester matrix rank function $\rk_{\calA}$ on $\calA$ which coincides, in case $K$ is a subfield of $\C$ closed under complex conjugation, with the rank function $\rk_{K\Gamma}$ on the group algebra $K\Gamma$ naturally inherited from the canonical rank function in the $*$-regular ring $\calU(\Gamma)$ \cite[Proposition 5.10]{AC2}. In light of this, one can define `generalized' $\ell^2$-Betti numbers in this more general setting, that is, arising from the $\Z$-crossed product algebra $\calA = C_K(X) \rtimes_T \Z$, for $K$ an \textit{arbitrary} field and $T$ an \textit{arbitrary} homeomorphism on a Cantor set $X$.

Turning back to the lamplighter group algebra $K\Gamma$, Graboswki has shown in a recent paper \cite{Gra16} the existence of irrational (in fact transcendental) $\ell^2$-Betti numbers arising from $\Gamma$, exhibiting a concrete example in \cite[Theorem 2]{Gra16}. With this result, the lamplighter group has become the simplest known example which gives rise to irrational $\ell^2$-Betti numbers. Very roughly, his idea is to compute $\ell^2$-Betti numbers by means of decomposing them as an infinite sum of (normalized) dimensions of kernels of finite-dimensional operators (i.e. matrices). He then realizes these matrices as adjacency-labeled matrices of certain graphs in order to determine the global behavior of the dimensions of their kernels. We use these ideas in this work, but applied to our construction. Our main result is the following:

\begin{theorem}[Theorem \ref{theorem-irrational.dimensions}]
Let $\Gamma$ be the lamplighter group. 
Then for any family of natural numbers $\{d_i \mid 1 \leq i \leq n\}$, and any family of polynomials $\{p_i(x) \mid 0 \leq i \leq n\}$ of positive degrees and non-negative integer coefficients, such that the linear coefficient of $p_0(x)$ is non-zero, there exists a matrix operator $A$ over $\Q \Gamma$ such that
$$b^{(2)}(A) = q_0 + q_1 \sum_{k \geq 1} \frac{1}{2^{p_0(k) + p_1(k)d_1^k + \cdots + p_n(k)d_n^k}}$$
for some non-zero rational numbers $q_0,q_1$. If moreover the constant coefficient of $p_0(x)$ is $0$, then
$$\sum_{k \geq 1} \frac{1}{2^{p_0(k) + p_1(k)d_1^k + \cdots + p_n(k)d_n^k}} \in \calG(\Gamma,\Q).$$
\end{theorem}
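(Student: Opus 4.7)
The plan is to adapt Grabowski's construction from \cite{Gra16} to the crossed product framework developed in \cite{AC,AC2}. Under the Fourier isomorphism $\Q\Gamma \cong \calA := C_\Q(X) \rtimes_T \Z$, with $X = \{0,1\}^\Z$ and $T$ the Bernoulli shift, the Betti number $b^{(2)}(A) = N - \rk_{\Q\Gamma}(A)$ of a matrix $A \in M_N(\Q\Gamma)$ coincides with $N - \rk_\calA(A)$, which by \cite[Theorem 4.7]{AC} is a limit of normalized ranks of finite-dimensional matrices obtained by evaluating $A$ on the approximating $*$-subalgebras $\calA_n \subseteq \calA$. The task is thus to design a single matrix $A$ so that, at every level $n$, the resulting finite matrix is the labelled-adjacency matrix of a graph $G_n$ which splits as a disjoint union of subgraphs whose normalized kernel dimensions telescope into $q_0 + q_1 \sum_{k=1}^{k_n} 2^{-E(k)}$ with $E(k) := p_0(k) + \sum_i p_i(k) d_i^k$, and so that $k_n \to \infty$ as $n \to \infty$.

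Concretely, I would build $A$ from characteristic functions of cylinder sets in $X$ combined with suitable powers of the generator $u$ implementing the $\Z$-action. At level $n$ the approximating algebra $\calA_n$ singles out periodic configurations of $T$ of periods compatible with $d_1^k,\dots,d_n^k$; within each such orbit, the additional coordinates prescribed by $p_0(k), p_1(k),\dots,p_n(k)$ cut out a cylinder of measure exactly $2^{-E(k)}$, on which a fixed ``Grabowski gadget'' produces kernel defect $q_1$. Because the Bernoulli measure is a product measure, the coordinate prescriptions corresponding to distinct $k$ live on independent coordinates, so the resulting subgraphs are vertex-disjoint and their kernel dimensions genuinely add. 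The hypothesis that the linear coefficient of $p_0$ is non-zero forces $E(k)$ to grow at least linearly, which both guarantees convergence of the series and controls the speed at which new components of $G_n$ appear as $n$ increases, so the series $\sum_k 2^{-E(k)}$ is exactly the limit of the finite sums arising from the approximation tower.

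The principal difficulty will be the uniform combinatorial design: a \emph{single} finite matrix $A$ over $\Q\Gamma$ must simultaneously exhibit the desired block decomposition in \emph{every} $\calA_n$. This calls for a stratification of the cylinder sets that is coherent across levels, extending Grabowski's one-parameter argument to accommodate the multi-parameter exponents $p_i(k) d_i^k$ by iterating the basic gadget $p_i(k)$ times along independent coordinate blocks of length $d_i^k$; the multiplicativity of cylinder measures then produces the exponent $E(k)$ cleanly. Verifying that the constant $q_1$ emerges identically for every $k$, independent of the parameters, is the delicate core of the computation. Finally, for the addendum: when $p_0(0) = 0$ one may run a parallel construction whose rational offset differs from $q_0$ by a controlled element of $\Z[1/2] \subseteq \calG(\Gamma,\Q)$ (this inclusion holds because $\Gamma$ contains finite $2$-elementary abelian subgroups of every rank), and combining the two Betti numbers in $\calG(\Gamma,\Q)$ isolates a rational multiple of the series; a further adjustment to arrange $q_1 \in \{\pm 2^{-m}\}$ then places the pure series in $\calG(\Gamma,\Q)$.
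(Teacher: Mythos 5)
Your proposal is a high-level strategy sketch, not a proof: you explicitly defer the ``principal difficulty'' (the uniform combinatorial design of the matrix) and the ``delicate core of the computation'' (that the constant $q_1$ emerges uniformly), and those two items \emph{are} the entire content of the theorem. The paper's proof consists precisely of writing down an explicit $A\in M_{3N+n+5}(\calA_{1/2})$, decomposing the graph $E_A(W)$ into four concrete families of connected components for each clopen set $W=[1\underline 10^{k_1}1\cdots 0^{k_r}11]$, and solving the flow equations to show that the crucial component $C_4$ has kernel dimension $2+\delta_{k_{i+1},\,p_0(k_i)-k_i-a_{0,0}+p_1(k_i)d_1^{k_i}+\cdots}$. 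None of this appears in the proposal.

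Beyond the gap, two of your framing claims are off the mark. First, the paper does not compute $b^{(2)}(A)$ as a \emph{limit} over the tower $\calA_n$; instead it picks a single auxiliary algebra $\calA_{1/2}=\calA(E_{1/2},\calP_{1/2})$ (which does not even appear in the approximating sequence $\{\calA_n\}$) and applies Proposition~\ref{proposition-vNdim.computation}/Formula~\eqref{equation-Betti.no.final}, which is an \emph{exact} expression $b^{\calA}(A)=\sum_{W\in\V}\mu(W)\dim_K(\ker\pi(A)_W)$ valid for any matrix over the fixed $\calA(E,\calP)$. Your picture of a coherent stratification ``in every $\calA_n$'' replaces this single, exact decomposition by a much harder and unnecessary infinite-level compatibility problem. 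Second, the role of the hypothesis $a_{1,0}\ge 1$ is not to guarantee convergence of $\sum_k 2^{-E(k)}$ (convergence already follows from $\deg p_0\ge 1$); rather, the construction literally places the integer $a_{1,0}-1$ as an edge label, and the $C_4$ Kronecker delta involves $p_0(k_i)-k_i-a_{0,0}=(a_{1,0}-1)k_i+a_{2,0}k_i^2+\cdots$, so $a_{1,0}\ge 1$ is what keeps these quantities non-negative and the delta condition well-posed. Finally, on the addendum: the argument in the paper is that when $a_{0,0}=0$ one gets $q_1=2^{-m}$ automatically from the construction (no ``further adjustment'' is needed), and then uses $\Q\subseteq\calG(\Gamma,\Q)$ from \cite[Corollary 6.14]{AG} to peel off $q_0$ and multiply by $2^m$; your appeal to $\Z[1/2]$ and finite $2$-elementary subgroups is in the right spirit but weaker and not what the paper invokes.
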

Here $b^{(2)}(A)$ stands for the \textit{$\ell^2$-Betti number of $A$} (see Definition \ref{definition-l2bettinumber.grouprings}). Thus we obtain a whole family of irrational and even transcendental $\ell^2$-Betti numbers arising from $\Gamma$. Another source of irrational $\ell^2$-Betti numbers arising from $\Gamma$ comes from the fact that the $*$-regular closure $\calR_{K\Gamma}$ contains a copy of the algebra of non-commutative rational power series $K_{\text{rat}}\langle \pazX \rangle$ in infinitely many indeterminates, see \cite[Subsection 6.2]{AC2} and Subsection \ref{subsection-rationalseries}. Using this, we show that $\calG (\Gamma,\Q)$ contains for instance the irrational algebraic number $\frac{1}{4}\sqrt{\frac{2}{7}}$ (Theorem \ref{thm:irrat-algl2betti}).

We also apply our machinery to study a particular crossed product algebra known as the odometer algebra. It is defined as the $\Z$-crossed product algebra $\calO := C_K(X) \rtimes_T \Z$ where $X = \{0,1\}^{\N}$ is the one-sided shift space, and the automorphism $T : X \ra X$ implementing the crossed product is given by addition of the element $(1,0,0,...)$ with carry over. Although it is not possible to realize the odometer algebra as a group algebra, this example is interesting in its own right because we are able to fully determine the structure of its $*$-regular closure $\calR_{\calO}$ (see Theorem \ref{theorem-reg.closure.odometer}), thus giving a complete description of the set of $\calO$-Betti numbers (Theorem \ref{theorem-betti.numbers.odometer}). The algebra $\calO$ has also been studied by Elek in \cite{Elek16}, although the author does not exactly compute the $*$-regular closure $\calR_{\calO}$; instead, the author computes its rank-completion, showing that it must be isomorphic to the von Neumann continuous factor $\calM_K$, which is by definition the rank-completion of $\varinjlim_i M_{2^i}(K)$ with respect to its unique rank metric (cf. \cite[Proposition 4.2]{AC2}). In fact we study a general version of the classical odometer algebra, namely the dynamical system generated by ``addition of $1$'' for arbitrary profinite completions of $\Z$.

This work is structured as follows. In Section \ref{section-preliminaries} we provide essential background and preliminary concepts about Sylvester matrix rank functions and $\ell^2$-Betti numbers for general group algebras. We summarize, in Section \ref{section-approx.crossed.product}, a general approximation construction for $\Z$-crossed product algebras using measure-theoretic tools, which enables us to construct a canonical Sylvester matrix rank function over the crossed product algebra. Using this construction, we derive a formula for computing generalized $\ell^2$-Betti numbers (see Definition \ref{definition-vN.dim.cross.prod}) over this large class of algebras (Formula \eqref{equation-Betti.no.final}). In Section \ref{section-lamplighter.algebra} we focus on the particular case of the lamplighter group algebra. We explicitly realize it as a $\Z$-crossed product algebra, thus enabling us to apply the whole theory developed in \cite{AC} and \cite{AC2}. Our main result (Theorem \ref{theorem-irrational.dimensions}) provides a vast family of irrational, and even transcendental, $\ell^2$-Betti numbers arising from the lamplighter group.

We also explore another method to find $\ell^2$-Betti numbers arising from the lamplighter group, using the algebra of non-commutative rational series. In this direction, we  obtain in Theorem \ref{thm:irrat-algl2betti} that the irrational algebraic number $\frac{1}{4}\sqrt{\frac{2}{7}}$ belongs to $\calG (\Gamma, \Q)$.

Finally we study, in Section \ref{section-odometer.alg}, the generalized odometer algebra $\calO(\ol{n})$ in great detail, and we are able to completely determine the algebraic structure of its $*$-regular closure (Theorem \ref{theorem-reg.closure.odometer}). We use this characterization in Theorem \ref{theorem-betti.numbers.odometer}, where we explicitly compute the whole set of $\calO(\ol{n})$-Betti numbers.

\section{Preliminaries}\label{section-preliminaries}

\subsection{\texorpdfstring{$*$}{}-regular rings and rank functions}\label{subsection-regular.rings.*.rank.functions}


A \textit{$*$-regular ring} is a regular ring $R$ endowed with a \textit{proper} involution $*$, that is, $x^*x = 0$ if and only if $x = 0$. 
In a $*$-regular ring $R$, for every $x \in R$ there exist unique projections $e, f \in R$ such that $xR = eR$ and $Rx = Rf$. It is common to denote them by $e = \mathrm{LP}(x)$ and $f = \mathrm{RP}(x)$, and are termed the \textit{left} and \textit{right projections} of $x$, respectively. 
We refer the reader to \cite{Ara87, Berb} for further information on $*$-regular rings.


For any subset $S \subseteq R$ of a unital $*$-regular ring, there exists a smallest unital $*$-regular subring of $R$ containing $S$ (\cite[Proposition 6.2]{AG}, see also \cite[Proposition 3.1]{LS12} and \cite[Proposition 3.4]{Jaik}). This $*$-regular ring is denoted by $\calR(S,R)$, and called the \textit{$*$-regular closure} of $S$ in $R$. 


\hspace{0.1cm}




Let us denote by $M(R)$ the set of finite matrices over $R$ of arbitrary size, i.e. $\bigcup_{n \geq 1} M_n(R)$.

\begin{definition}\label{definition-sylvester.rank}
A \textit{Sylvester matrix rank function} on a unital ring $R$ is a map $\rk : M(R) \ra \R^+$ satisfying the following conditions:
\begin{enumerate}[a),leftmargin=1cm]
\item ${\rm rk} (0)= 0$ and ${\rm rk}(1)= 1$;
\item ${\rm rk} (M_1M_2) \leq \min\{{\rm rk}(M_1), {\rm rk}(M_2)\}$ for any matrices $M_1$ and $M_2$ of appropriate sizes;
\item ${\rm rk} \begin{pmatrix} M_1 & 0 \\ 0 & M_2 \end{pmatrix} = {\rm rk} (M_1) + {\rm rk}(M_2) $ for any matrices $M_1$ and $M_2$;
\item ${\rm rk} \begin{pmatrix} M_1 & M_3 \\ 0 & M_2  \end{pmatrix} \ge {\rm rk}(M_1) + {\rm rk}(M_2)$ for any matrices $M_1$, $M_2$ and $M_3$ of appropriate sizes.
\end{enumerate}
\end{definition}

The notion of Sylvester matrix rank function was first introduced by Malcolmson in \cite{Mal} on a question of characterizing homomorphisms from a fixed ring to division rings. Equivalent definitions exist, introduced by Malcolmson itself and Schofield \cite{Sch}. For more theory and properties about Sylvester matrix rank functions we refer the reader to \cite{Jaik} and \cite[Part I, Chapter 7]{Sch}.

In a regular ring $R$, any Sylvester matrix rank function $\rk$ is uniquely determined by its values on elements of $R$, see e.g. \cite[Corollary 16.10]{Goo91}. This is no longer true if $R$ is not regular.


Any Sylvester matrix rank function $\rk$ on a unital ring $R$ defines a pseudo-metric by the rule $d(x,y) = \rk(x-y)$. The rank function is called \textit{faithful} if the only element with zero rank is the zero element. In this case, $d$ becomes a metric on $R$. 
The ring operations are continuous with respect to $d$, and $\rk$ extends uniquely to a Sylvester matrix rank function $\ol{\rk}$ on the completion $\ol{R}$ of $R$ with respect to $d$.

For a $*$-subring $S$ of a $*$-regular ring $R$, there are tight connections between the structure of projections of the $*$-regular closure $\calR(S,R)$ and possible values of a Sylvester matrix rank function defined on $R$, see for instance \cite{Jaik, AC2}.





\subsection{\texorpdfstring{$\ell^2$}{}-Betti numbers for group algebras}\label{subsection-Betti.numbers}

In this subsection we define $\ell^2$-Betti numbers arising from a group $G$ with coefficients in a subfield $K \subseteq \C$ closed under complex conjugation.

Let $G$ be a discrete, countable group. For any subring $R \subseteq \C$ closed under complex conjugation, let $RG$ denote the \textit{group $*$-algebra} of $G$ with coefficients in $R$, consisting of formal finite sums $\sum_{\gamma \in G} a_{\gamma} \gamma$ with $a_{\gamma} \in R$. The sum operation is defined pointwise, the product is induced by the group product and the $*$-operation is defined by linearity and according to the rule $(a_{\gamma}\gamma)^* = \ol{a_{\gamma}} \gamma^{-1}$, for $a_{\gamma} \in R$ and $\gamma \in G$. Let also $\ell^2(G)$ denote the Hilbert space of all square-summable functions $f : G \ra \C$ with obvious addition and scalar multiplication, and inner product defined by
$$\langle f,g \rangle_{\ell^2(G)} = \sum_{\gamma \in G} f(\gamma) \ol{g(\gamma)} \quad \text{ for } f,g \in \ell^2(G).$$
The space $\ell^2(G)$ has an orthonormal basis, naturally identified with $G$, consisting of indicator functions $\xi_{\gamma} \in \ell^2(G)$ for each $\gamma \in G$. Here $\xi_{\gamma}$ is defined to be $1$ over the element $\gamma$ and $0$ otherwise.

Observe that $G$ acts faithfully on $\ell^2(G)$ by left (resp. right) multiplication $\lambda : G \ra \calB(\ell^2(G))$ (resp. $\rho : G \ra \calB(\ell^2(G))$), defined by
$$\lambda_{\gamma}(f)(\delta) = f(\gamma^{-1}\delta) \quad \text{ and } \quad \rho_{\gamma}(f)(\delta) = f(\delta \gamma)$$
for $f \in \ell^2(G)$ and $\gamma,\delta \in G$. Either $\lambda$ or $\rho$ extend $R$-linearly to actions $RG \act \ell^2(G)$ by bounded operators, preserving the $*$-operation. We will identify $RG$ with the image of $\lambda$ inside $\calB(\ell^2(G))$.

We denote by $\calN(G)$ the weak-completion of $\C G \subseteq \calB(\ell^2(G))$, which is commonly known as the \textit{group von Neumann algebra of $G$}. An equivalent algebraic definition can be given: it consists exactly of those bounded operators $T : \ell^2(G) \ra \ell^2(G)$ that are $G$-equivariant, i.e. the relation $\rho_{\gamma} \circ T = T \circ \rho_{\gamma}$ is satisfied for every $\gamma \in G$. The algebra $\calN(G)$ is endowed with a normal, positive and faithful trace, defined as
$$\tr_{\calN(G)}(T) := \langle T(\xi_e),\xi_e \rangle_{\ell^2(G)} \quad \text{for } T \in \calN(G).$$
Note that for an element $T = \sum_{\gamma \in G} a_{\gamma} \gamma \in \C G$, its trace is simply the coefficient $a_e$.

All the above constructions can be extended to $k \times k$ matrices: the ring $M_k(RG)$ acts faithfully on $\ell^2(G)^k$ by left (resp. right) multiplication. We denote the extended actions by $\lambda_k$ and $\rho_k$, respectively. We identify $M_k(RG)$ with its image $M_k(RG) \subseteq \calB (\ell^2(G)^k)$ under $\lambda_k$. We denote by $\calN_k (G)$ the weak-completion of $M_k(\C G)$ inside $\calB (\ell^2(G)^k)$, which is easily seen to be equal to $M_k(\calN(G))$. The previous trace can be extended to an unnormalized trace over $M_k(\calN(G))$ by setting, for a matrix $T = (T_{ij}) \in M_k(\calN(G))$,
$$\Tr_{\calN_k (G)}(T) := \sum_{i=1}^k \tr_{\calN (G)}(T_{ii}).$$

A \textit{finitely generated Hilbert (right) $G$-module} is any closed subspace $V$ of $\ell^2(G)^k$, invariant with respect to the \textit{right} action $\rho^{\oplus k} := \rho \oplus \stackrel{k}{\cdots} \oplus \rho$. For $V \leq \ell^2(G)^k$ a finitely generated Hilbert $G$-module, the corresponding orthogonal projection operator $p_V : \ell^2(G)^k \ra \ell^2(G)^k$ onto $V$ belongs to $\calN_k(G)$. One then defines the \textit{von Neumann dimension of $V$} as the trace of $p_V$:
$$\dim_{\calN(G)}(V) := \Tr_{\calN_k (G)}(p_V).$$

In the particular case of matrix group rings $M_k(KG)$, being $K \subseteq \C$ a subfield closed under complex conjugation, every matrix operator $A \in M_k(KG)$ gives rise to an \textit{$\ell^2$-Betti number}, in the following way. Consider $A$ as an operator $A : \ell^2(G)^k \ra \ell^2(G)^k$ acting on the left, and take $p_A \in \calN_k (G)$ to be the projection onto $\ker \, A$, which is a finitely generated Hilbert $G$-module. One can then consider the von Neumann dimension of $\ker \, A$, which is simply the trace of the projection $p_A$.

\begin{definition}\label{definition-l2bettinumber.grouprings}
Let $A$ be a matrix operator in $M_k(KG)$ for some integer $k \geq 1$. We define the \textit{$\ell^2$-Betti number of $A$} by
$$b^{(2)}(A) := \dim_{\calN(G)}(\ker \, A) = \Tr_{\calN_k (G)}(p_A).$$
The set of all $\ell^2$-Betti numbers of operators $A \in M_k(KG)$ will be denoted by $\calC(G,K)$, and will be referred to as the set of all \textit{$\ell^2$-Betti numbers arising from $G$ with coefficients in $K$}. It should be noted that this set is always a subsemigroup of $(\R^+,+)$. We also write $\calG(G,K)$ for the subgroup of $(\R,+)$ generated by $\calC(G,K)$.
\end{definition}

It is also possible to define the von Neumann dimension by means of a \textit{Sylvester matrix rank function}, as follows. Let $\calU(G)$ be the algebra of unbounded operators affiliated to $\calN(G)$; equivalently, the classical ring of quotients of $\calN(G)$. It is a $*$-regular ring possessing a Sylvester matrix rank function $\rk_{\calU(G)}$ defined by
$$\rk_{\calU(G)}(U) := \Tr_{\calN_k(G)}(\text{LP}(U)) = \Tr_{\calN_k(G)}(\text{RP}(U))$$
for any matrix $U \in M_k(\calU(G))$, where $\text{LP}(U)$ and $\text{RP}(U)$ are the left and right projections of $U$ inside the $*$-regular algebra $M_k(\calU(G))$, respectively.
Notice that these projections actually belong to $\calN_k(G)$. In particular, we obtain by restriction a Sylvester matrix rank function $\rk_{KG}$ over $KG$. So for a matrix operator $A \in M_k(KG)$ we have $p_A=  1_k-\text{RP}(A)$ and we get the equality
\begin{equation}\label{equation-vN.dim.rank.UG}
b^{(2)}(A) = k - \rk_{KG}(A).
\end{equation}
Here $1_k$ stands for the identity matrix in $k$ dimensions.


\section{Approximating crossed product algebras through a dynamical perspective}\label{section-approx.crossed.product}

We recall the general construction used in \cite{AC} on approximating $\Z$-crossed product algebras.

Let $T : X \ra X$ be a homeomorphism of a totally disconnected, compact metrizable space $X$, which we also assume to be infinite (e.g. one can take $X$ to be the Cantor space). Let also $K$ be an arbitrary field endowed with a positive definite involution, that is, an involution $*$ such that for all $n\ge 1$ and  $a_1,\dots, a_n\in K$, we have $\sum_{i=1}^n a_i^*a_i= 0 \implies a_i=0$ for each $i=1,\dots ,n$.

The algebras of interest are $\Z$-crossed product algebras of the form
$$\calA := C_K(X) \rtimes_T \Z,$$
where $C_K(X)$ denotes the algebra of locally constant functions $f : X \ra K$; equivalently, the algebra of continuous functions $f : X \ra K$ when $K$ is endowed with the discrete topology. For the approximation process, we choose a $T$-invariant, ergodic and full probability measure $\mu$ on $X$. We refer the reader to \cite[Section 3]{AC} for a detailed exposition of the construction.

For any clopen subset $\emptyset \neq E \subseteq X$ and any (finite) partition $\calP$ of the complement $X \backslash E$ into clopen subsets, let $\calB$ be the unital $*$-subalgebra of $\calA$ generated by the partial isometries $\{\chi_Z t \mid Z \in \calP\}$. Here $t$ denotes the generator of the copy of $\Z$ inside $\calA$, and $\chi_A$ denotes the characteristic function of the set $A$. 
There exists a quasi-partition of $X$ (i.e. a countable family of non-empty, pairwise disjoint clopen subsets whose union has full measure) given by the $T$-translates of clopen subsets $W$ of the form
\begin{equation}\label{equation-Wexpression}
W = E \cap T^{-1}(Z_1) \cap \cdots \cap T^{-k+1}(Z_{k-1}) \cap T^{-k}(E)
\end{equation}
for $k \geq 1$ and $Z_i \in \calP$, whenever these are non-empty. In fact, if we write $|W| := k$ (the length of $W$) and $\V := \{W \neq \emptyset \text{ as above}\}$, then for a fixed $W \in \V$ and $0 \leq i < |W|$ the element $\chi_{T^i(W)}$ belongs to $\calB$, and moreover the set of elements
$$e_{ij}(W) = (\chi_{X \backslash E}t)^i \chi_W (t^{-1} \chi_{X \backslash E})^j, \quad 0 \leq i,j < |W|,$$
forms a set of $|W| \times |W|$ matrix units in $\calB$ (that is, they satisfy $e_{ik}(W) e_{lj}(W) = \delta_{k,l} e_{ij}(W)$ for all indices $0 \leq i,j,k,l < |W|$). In addition, by \cite[Proposition 3.11]{AC} the element $h_W := e_{00}(W) + \cdots + e_{|W|-1,|W|-1}(W)$ is central in $\calB$ and we have a $*$-isomorphism
$$h_W \calB \cong M_{|W|}(K).$$
In this way one obtains an injective $*$-representation $\pi : \calB \hookrightarrow \prod_{W \in \V} M_{|W|}(K) =: \gotR_{\calB}$ defined by $\pi(a) = (h_W \cdot a)_W$ \cite[Proposition 3.13]{AC}.

From now on the $*$-algebra $\calB$ corresponding to $(E,\calP)$ as above will be denoted by $\calA(E,\calP)$.

\hspace{0.06cm}

Take now $\{E_n\}_{n \geq 1}$ to be a decreasing sequence of clopen sets of $X$ together with a family $\{\calP_n\}_{n \geq 1}$ consisting of (finite) partitions into clopen sets of the corresponding complements $X \backslash E_n$, satisfying:
\begin{enumerate}[a),leftmargin=1cm]
\item the intersection of all the $E_n$ consists of a single point $y \in X$;
\item $\calP_{n+1} \cup \{E_{n+1}\}$ is a partition of $X$ finer than $\calP_n \cup \{E_n\}$;
\item $\bigcup_{n \geq 1} (\calP_n \cup \{E_n\})$ generates the topology of $X$.
\end{enumerate}
By writing $\V_n$ for the set of all the non-empty subsets $W$ of the form \eqref{equation-Wexpression} corresponding to the pair $(E_n,\calP_n)$, and setting $\calA_n := \calA(E_n,\calP_n)$ and $\gotR_n := \prod_{W \in \V_n} M_{|W|}(K)$, we get injective $*$-representations $\pi_n : \calA_n \hookrightarrow \gotR_n$, in such a way that the diagrams
\begin{equation*}
\vcenter{
	\xymatrix{
	\calA_n \ar@{^{(}->}[r]^{\iota_n} \ar@{^{(}->}[d]^{\pi_n} & \calA_{n+1} \ar@{^{(}->}[d]^{\pi_{n+1}} \ar@{^{(}->}[r]^{\iota_{n+1}} & 	\calA_{n+2} \ar@{^{(}->}[d]^{\pi_{n+2}} \ar@{^{(}->}[r] & \cdots \ar@{^{(}->}[r] & \calA_{\infty} \ar@{^{(}->}[d]^{\pi_{\infty}} \\
	\gotR_n \ar@{^{(}->}[r]^{j_n} & \gotR_{n+1} \ar@{^{(}->}[r]^{j_{n+1}} & \gotR_{n+2} \ar@{^{(}->}[r] & \cdots \ar@{^{(}->}[r] & \gotR_{\infty}
	}
}\label{diagram-An.Rn}
\end{equation*}
commute. Here $\iota_n$ is the natural embedding $\iota_n(\chi_Z t) = \sum_{Z'} \chi_{Z'}t$ where the sum is taken with respect to all the $Z' \in \calP_{n+1}$ satisfying $Z' \subseteq Z$, the maps $j_n : \gotR_n \hookrightarrow \gotR_{n+1}$ are the embeddings given in \cite[Proposition 4.2]{AC}, and $\calA_{\infty}, \gotR_{\infty}$ are the inductive limits of the direct systems $(\calA_n,\iota_n), (\gotR_n,j_n)$, respectively.

\begin{remark}\label{remark-algebra.Ainfty}
The algebra $\calA_{\infty}$ can be explicitly described in terms of the crossed product, as follows. For $U \subseteq X$ an open set, denote by $C_{c,K}(U)$ the ideal of $C_K(X)$ generated by the characteristic functions $\chi_V$, where $V$ ranges over the clopen subsets $V \subseteq X$ contained in $U$. By \cite[Lemma 4.3]{AC}, $\calA_{\infty}$ coincides with the $*$-subalgebra of $\calA = C_K(X) \rtimes_T \Z$ generated by $C_K(X)$ and $C_{c,K}(X \backslash \{y\}) t$. (Recall from a) above that $\{ y \}= \bigcap_{n\ge 1} E_n$.)
\end{remark}

One can define a Sylvester matrix rank function on each $\gotR_n$ by the rule
\begin{equation}\label{equation-rank.over.Rn}
\rk_n(M) = \sum_{W \in \V_n} \mu(W) \Rk(M_W) \quad \text{ for } M = (M_W)_W \in \gotR_n,
\end{equation}
being $\Rk$ the usual rank of matrices. These Sylvester matrix rank functions are compatible with respect to the embeddings $j_n$, so they give rise to a well-defined Sylvester matrix rank function $\rk_{\infty}$ on $\gotR_{\infty}$. 



\begin{theorem}\cite[Theorem 4.7 and Proposition 4.8]{AC}\label{theorem-rank.function}
Following the above notation, if $\gotR_{\rk} := \ol{\gotR_{\infty}}$ denotes the rank-completion of $\gotR_{\infty}$ with respect to its Sylvester matrix rank function $\rk_{\infty}$ (see Subsection \ref{subsection-regular.rings.*.rank.functions}), then there exists an injective $*$-homomorphism $\pi_{\rk} : \calA \ra \gotR_{\rk}$ making the diagram
\begin{center}
\begin{tikzcd}[remember picture]
    \calA \arrow[r,"\pi_{\rk}"] & \gotR_{\rk} \\
    \calA_{\infty} \arrow[r,"\pi_{\infty}"] & \gotR_{\infty}\\
\end{tikzcd}
\begin{tikzpicture}[overlay,remember picture]
\path (\tikzcdmatrixname-1-1) to node[midway,sloped]{$\supseteq$}
(\tikzcdmatrixname-2-1);
\path (\tikzcdmatrixname-1-2) to node[midway,sloped]{$\supseteq$}
(\tikzcdmatrixname-2-2);
\end{tikzpicture}
\end{center}

\vspace{-0.6cm}

\noindent commutative, and sending the element $t$ to $\pi_{\rk}(t) = \lim_n \pi_n(\chi_{X \backslash E_n}t)$. Moreover, we obtain a Sylvester matrix rank function $\rk_{\calA}$ on $\calA$ by restriction of $\ol{\rk_{\infty}}$ (the extension of $\rk_{\infty}$ to $\gotR_{\rk}$) on $\calA$, which is extremal and unique with respect to the following property:
\begin{equation}\label{equation-unique.property}
\rk_{\calA}(\chi_U) = \mu(U) \quad \text{ for every clopen subset } U \subseteq X.
\end{equation}

Finally, the rank-completion of $\calA$ with respect to $\rk_{\calA}$ gives back $\gotR_{\rk}$, that is $\ol{\calA} = \gotR_{\rk}$.
\end{theorem}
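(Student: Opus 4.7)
The plan is to construct $\pi_{\rk}$ by first defining $\pi_{\rk}(t)$ as a rank-limit, and then extending using the crossed-product structure. The approximating elements $\chi_{X\setminus E_n}t = \sum_{Z\in\calP_n}\chi_Z t$ already lie in $\calA_n$, so their images $\pi_n(\chi_{X\setminus E_n}t)$ are defined and may be viewed inside $\gotR_\infty$ via the inclusions $j_n$. For $m>n$ the difference $\pi_\infty(\chi_{X\setminus E_m}t) - \pi_\infty(\chi_{X\setminus E_n}t)$ equals $\pi_\infty(\chi_{E_n\setminus E_m}t)$, whose $\rk_\infty$-value is bounded by $\mu(E_n\setminus E_m)\leq \mu(E_n)$ and tends to $0$ by condition (a). Hence the sequence is $\rk_\infty$-Cauchy and its limit $t_{\rk}$ exists in the rank-complete ring $\gotR_{\rk}$. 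I would then verify that $t_{\rk}$ is unitary and implements the shift $f\mapsto f\circ T^{-1}$ on $\pi_\infty(C_K(X))$, both by taking rank-limits of the corresponding identities for $\chi_{X\setminus E_n}t$ and using continuity of $+$, $\cdot$, $*$ in the rank metric. The universal property of the crossed product then yields the desired extension $\pi_{\rk}:\calA\to\gotR_{\rk}$ sending $t\mapsto t_{\rk}$ and restricting to $\pi_\infty$ on $\calA_\infty$.

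Next I would introduce $\rk_{\calA} := \ol{\rk_\infty}\circ\pi_{\rk}$, automatically a Sylvester matrix rank function as the pullback of one. To verify property \eqref{equation-unique.property}, note that any clopen $U\subseteq X$ lies in $\calA_n$ for $n$ large enough by condition (c). A direct count in $\gotR_n$, using the quasi-partition of $X$ by $T$-translates of the $W\in\V_n$, yields
\[
\rk_n(\pi_n(\chi_U)) = \sum_{W\in\V_n}\mu(W)\cdot \#\{0\leq i<|W|: T^i W\subseteq U\} = \mu(U),
\]
where the last equality uses $T$-invariance of $\mu$. Injectivity of $\pi_{\rk}$ reduces to faithfulness of $\rk_{\calA}$: for nonzero $a\in\calA$, one approximates $a$ by elements $a_n\in\calA_n$ (by inserting $\chi_{X\setminus E_n}$ around each $t$-factor in a finite expansion $a=\sum_k f_k t^k$), with $\rk_{\calA}(a-a_n)\leq C\mu(E_n)$ for an $a$-dependent constant $C$, and then bounds $\rk_{\calA}(a_n)=\rk_n(\pi_n(a_n))$ from below via the $\mu$-measure of an explicit nonempty clopen subset of the support of $a$, which is positive by fullness of $\mu$.

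For uniqueness, let $\rk'$ be any Sylvester matrix rank function on $\calA$ satisfying \eqref{equation-unique.property}. Its restriction to $C_K(X)$ matches $\rk_{\calA}$, and since $t$ is unitary in $\calA$ one automatically obtains $\rk'(t)=1$. The matrix-unit structure of the $\calA_n$ then forces $\rk'=\rk_{\calA}$ on $\bigcup_n \calA_n$, and the Cauchy-style approximation from the injectivity argument — whose rank bounds use only property \eqref{equation-unique.property} and are therefore available for both $\rk'$ and $\rk_{\calA}$ — propagates the equality to all of $\calA$. Extremality then follows formally: any convex decomposition $\rk_{\calA}=\lambda\rho_1+(1-\lambda)\rho_2$ restricts on $C_K(X)$ to a decomposition of $\mu$ into $T$-invariant probability measures, and ergodicity of $\mu$ forces both restrictions to equal $\mu$, so the uniqueness just proved yields $\rho_1=\rho_2=\rk_{\calA}$. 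Finally, $\pi_{\rk}(\calA)\supseteq \pi_\infty(\calA_\infty)$ is rank-dense in $\gotR_\infty$, which is in turn rank-dense in $\gotR_{\rk}$ by construction; combined with $\pi_{\rk}$ being rank-preserving and injective, this identifies $\ol{\calA}$ with $\gotR_{\rk}$.

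The step I expect to be genuinely subtle is faithfulness of $\rk_{\calA}$: producing a uniform positive lower bound for $\rk_{\calA}(a)$ when $a\neq 0$ requires pairing the matrix-unit description of $\calA_n$ with the fullness and $T$-ergodicity of $\mu$, so that the candidate ``support'' set controlling the bound does not get collapsed by the finite-stage $T$-dynamics. Everything else — the Cauchy estimate, the universal property, the count giving \eqref{equation-unique.property}, and extremality via ergodicity — is a controllable application of standard crossed-product and rank-completion machinery.
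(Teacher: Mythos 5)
This result is quoted verbatim from \cite{AC} (Theorem 4.7 and Proposition 4.8), so the present paper contains no proof for me to compare against; what follows is an assessment of your outline on its own merits. Your overall route --- construct $\pi_{\rk}(t)$ as the rank-limit of $\pi_n(\chi_{X\backslash E_n}t)$ (your Cauchy estimate $\rk_{\infty}(\pi_{\infty}(\chi_{E_n\backslash E_m}t))\leq\mu(E_n)\to 0$ is correct), verify unitarity and covariance of the limit, extend by the universal property of the crossed product, pull back $\ol{\rk_{\infty}}$, verify \eqref{equation-unique.property} via the weighted count over the quasi-partition (the computation $\sum_{W\in\V_n}\mu(W)\#\{i:T^iW\subseteq U\}=\mu(U)$ using $T$-invariance is right), obtain uniqueness by agreement on $\calA_{\infty}$ plus a rank-limit, and obtain extremality from ergodicity of $\mu$ --- is sound and is very plausibly the argument of \cite{AC}.

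Two places need more than you give. First, the claim ``the matrix-unit structure of the $\calA_n$ forces $\rk'=\rk_{\calA}$ on $\bigcup_n\calA_n$'' is not immediate, because the central idempotents $h_W$, $W\in\V_n$, are mutually orthogonal but do \emph{not} sum to $1$ in $\calA_n$; one needs to note that, for $h_F:=\sum_{W\in F}h_W$ with $F\subseteq\V_n$ finite, property \eqref{equation-unique.property} already forces $\rk'(1-h_F)=1-\sum_{W\in F}|W|\mu(W)\to 0$, and then invoke uniqueness of the Sylvester rank on each corner $h_W\calA_n\cong M_{|W|}(K)$ once it is pinned down by $\rk'(e_{ii}(W))=\mu(T^iW)=\mu(W)$. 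Second, you correctly flag faithfulness of $\rk_{\calA}$ as the delicate step, but the detour through the approximants $a_n$ is heavier than needed and the uniform lower bound on $\rk_{\calA}(a_n)$ is left unconstructed. A direct argument is available: write $a=\sum_{|k|\leq N}f_kt^k\neq 0$ and, multiplying by a power of the unit $t$, assume $f_0\neq 0$; since $\mu$ is ergodic, full and non-atomic, each $\{x:T^kx=x\}$ for $0<|k|\leq N$ is a $T$-invariant closed $\mu$-null set, so one may choose a clopen $V$ on which $f_0$ is a nonzero constant and with $T^kV\cap V=\emptyset$ for $0<|k|\leq N$; then $\chi_Va\chi_V=f_0(V)\chi_V$, whence $\rk_{\calA}(a)\geq\rk_{\calA}(\chi_Va\chi_V)=\mu(V)>0$. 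This bound uses only \eqref{equation-unique.property} and monotonicity of the rank under one-sided multiplication, so it also furnishes, for free, the $\rk'$-uniform control that your uniqueness propagation step requires.
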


In particular, since $\gotR_{\rk}$ has the structure of a $*$-regular algebra, we can consider the $*$-regular closure of $\calA$ inside $\gotR_{\rk}$, which we denote by $\calR_{\calA} := \calR(\calA,\gotR_{\rk})$, see \cite{AC2} for more details. The $*$-regular closure $\calR_{\calA}$ is extensively studied in \cite{AC2}. For our purposes, it will become important when computing $\calO(\ol{n})$-Betti numbers arising from the generalized odometer algebra $\calO(\ol{n})$ in Section \ref{section-odometer.alg}, and also in Subsection \ref{subsection-rationalseries}.

\subsection{A formula for computing \texorpdfstring{$\calA$}{}-Betti numbers inside \texorpdfstring{$\calB$}{}}\label{subsection-formula.compute.betti.no}

In this subsection we give a formula for computing $\calA$-Betti numbers of elements from matrix algebras over the approximation algebra $\calB = \calA(E,\calP)$. We will use ideas from \cite{Gra16}, although applied to our construction.

By Theorem \ref{theorem-rank.function}, the algebra $\calA = C_K(X) \rtimes_T \Z$ possesses a `canonical' Sylvester matrix rank function $\rk_{\calA}$, unique with respect to Property \eqref{equation-unique.property}. It is then natural to ask which is the set of positive real numbers reached by such a Sylvester matrix rank function.


\begin{definition}\label{definition-vN.dim.cross.prod}
Let $A \in M_k(\calA)$. We define the \textit{$\calA$-Betti number of $A$} by (cf. Definition \ref{definition-l2bettinumber.grouprings}, see also Equation \eqref{equation-vN.dim.rank.UG})
$$b^{\calA}(A) := k - \rk_{\calA}(A).$$
\end{definition}

The set consisting of all $\calA$-Betti numbers of elements $A \in M_k(\calA)$ will be denoted by $\calC(\calA)$. This set has the structure of a semigroup of $(\R^+,+)$ (cf. Definition \ref{definition-l2bettinumber.grouprings}). The subgroup of $(\R,+)$ generated by $\calC(\calA)$ will be denoted by $\calG(\calA)$. It is shown in Section \ref{section-lamplighter.algebra} that the definition of $\calA$-Betti number coincides with Definition \eqref{definition-l2bettinumber.grouprings} in case $\calA$ is the lamplighter group algebra $\Gamma$, and thus $\calC(\calA) = \calC(\Gamma,K)$.\\

We now focus on the approximation algebra $\calB = \calA(E,\calP)$, where recall that $E$ is any non-empty clopen subset of $X$, and $\calP$ a (finite) partition of the complement $X \backslash E$ into clopen subsets. Let $\pi : \calB \ra \gotR_{\calB}$ be the faithful $*$-representation on $\gotR_{\calB} = \prod_{W \in \V} M_{|W|}(K)$ given by $\pi(a) = (h_W \cdot a)_W$. We extend $\pi$ to a faithful $*$-representation, also denoted by $\pi$, over matrix algebras
$$\pi : M_k(\calB) \ra M_k(\gotR_{\calB}) \cong \prod_{W \in \V} M_k(K) \otimes M_{|W|}(K)$$
in a canonical way. Hence $\rk_{\calA}$ can be computed over elements $A \in M_k(\calB)$ by means of the formula (recall \eqref{equation-rank.over.Rn})
$$\rk_{\calA}(A) = \sum_{W \in \V} \mu(W) \Rk(\pi(A)_W),$$
where $\Rk$ is the usual rank of matrices. Note that $\pi(A)_W \in M_k(K) \otimes M_{|W|}(K) = M_{k|W|}(K)$. We thus obtain the following proposition.

\begin{proposition}\label{proposition-vNdim.computation}
With the above notation, for a given element $A \in M_k(\calB)$ we have the formula
$$b^{\calA}(A) = \sum_{W \in \V} \mu(W) \emph{dim}_K(\ker \, \pi(A)_W).$$
Here $\emph{dim}_K(\cdot)$ denotes the usual $K$-dimension of finite-dimensional $K$-vector spaces.
\end{proposition}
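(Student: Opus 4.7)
The plan is to derive the formula as an immediate consequence of the rank-formula stated just above the proposition, combined with the rank-nullity theorem applied pointwise and the $T$-invariance of $\mu$.

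First, I would start from Definition \ref{definition-vN.dim.cross.prod}, which gives $b^{\calA}(A) = k - \rk_{\calA}(A)$. Using the formula $\rk_{\calA}(A) = \sum_{W \in \V} \mu(W) \Rk(\pi(A)_W)$ displayed just before the proposition, together with the identification $\pi(A)_W \in M_{k}(K) \otimes M_{|W|}(K) = M_{k|W|}(K)$, rank-nullity over $K$ gives
\begin{equation*}
\Rk(\pi(A)_W) = k|W| - \dim_K(\ker \, \pi(A)_W).
\end{equation*}
Substituting into the rank formula yields
\begin{equation*}
\rk_{\calA}(A) = k \sum_{W \in \V} \mu(W)|W| - \sum_{W \in \V} \mu(W) \dim_K(\ker \, \pi(A)_W).
\end{equation*}

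The second step is to show that $\sum_{W \in \V} \mu(W) |W| = 1$. This is where the dynamical content enters: the family $\{T^i(W) : W \in \V,\ 0 \leq i < |W|\}$ is, by construction, a quasi-partition of $X$, i.e.\ a countable family of pairwise disjoint non-empty clopen sets whose union has full $\mu$-measure. Since $\mu$ is $T$-invariant, each translate $T^i(W)$ has the same measure $\mu(W)$, so summing gives
\begin{equation*}
1 = \mu(X) = \sum_{W \in \V} \sum_{i=0}^{|W|-1} \mu(T^i(W)) = \sum_{W \in \V} |W| \mu(W).
\end{equation*}

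Plugging this back in, $\rk_{\calA}(A) = k - \sum_W \mu(W) \dim_K(\ker \, \pi(A)_W)$, and therefore
\begin{equation*}
b^{\calA}(A) = k - \rk_{\calA}(A) = \sum_{W \in \V} \mu(W) \dim_K(\ker \, \pi(A)_W),
\end{equation*}
as required. There is no genuine obstacle in this argument; the only point deserving explicit mention is the quasi-partition/$T$-invariance step, since it is what links the combinatorial quantity $\sum_W \mu(W)|W|$ to the total mass $1$. Everything else is a direct manipulation of the already-established rank formula and the elementary rank-nullity identity for finite-dimensional $K$-vector spaces.
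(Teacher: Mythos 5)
Your proof is correct and follows essentially the same route as the paper: apply rank--nullity pointwise to $\pi(A)_W$ inside the displayed rank formula, then use the quasi-partition property (with $T$-invariance of $\mu$) to see $\sum_{W\in\V}\mu(W)|W|=1$. The paper gives exactly this argument, just more tersely.
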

\begin{proof}
It is just a matter of computation, using that $\Rk(M) = m - \text{dim}_K(\ker \, M)$ for any matrix $M \in M_m(K)$ and that $\V$ forms a quasi-partition of $X$, so $\sum_{W \in \V} \mu(W) |W| = 1$.
\end{proof}

Fix now $\{e_{ij} \mid 0 \leq i,j < k \}$ a complete system of matrix units for $M_k(K)$. For a fixed $W \in \V$, the family $\{e_{ij} \otimes e_{i'j'}(W) \mid 0 \leq i,j < k;\, 0 \leq i',j' < |W| \}$ is a complete system of matrix units for $M_k(K) \otimes M_{|W|}(K)$, where the $e_{i'j'}(W)$ are the matrix units for $h_W\calB = M_{|W|}(K)$ introduced in Section \ref{section-approx.crossed.product}. In particular $\pi(e_{ij} \otimes 1)_W = e_{ij} \otimes h_W$. Let now $M_k(K) \otimes M_{|W|}(K)$ act on the $K$-vector space $K^k \otimes K^{|W|}$, with $K$-basis $\pazB = \{e_i \otimes e_{i'}(W) \mid 0 \leq i < k;\, 0 \leq i' < |W| \}$, by left matrix multiplication:
$$(e_{ij} \otimes e_{i'j'}(W)) \cdot (e_a \otimes e_{a'}(W)) = \delta_{j,a} \delta_{j',a'} \text{ } e_i \otimes e_{i'}(W).$$
Let $\langle \cdot , \cdot \rangle$ be the scalar product on $K^k \otimes K^{|W|}$ rendering the basis $\pazB$ orthonormal. 
Then for an element $A \in M_k(\calB)$, the entry of the matrix $\pi(A)_W$ corresponding to the $e_{ij} \otimes e_{i'j'}(W)$ component is given by
$$\langle \pi(A)_W \cdot (e_j \otimes e_{j'}(W)), e_i \otimes e_{i'}(W) \rangle.$$
One can think of the matrix $\pi(A)_W$ as the \textit{adjacency-labeled} matrix of an edge-labeled graph $E_A(W)$ defined as follows:
\begin{enumerate}[a),leftmargin=1cm]
\item The vertices are the pairs $(e_i, e_{i'}(W))$, for $0 \leq i < k$, $0 \leq i' < |W|$.
\item We have an arrow from $(e_j, e_{j'}(W))$ to $(e_i, e_{i'}(W))$ if and only if
$$\langle \pi(A)_W \cdot (e_j \otimes e_{j'}(W)), e_i \otimes e_{i'}(W) \rangle \neq 0.$$
In this case, we label the arrow as $d_{(j,j'),(i,i')} := \langle \pi(A)_W \cdot (e_j \otimes e_{j'}(W)), e_i \otimes e_{i'}(W) \rangle$.
\end{enumerate}
Here \textit{adjacency-labeled} matrix means that the coefficient of $\pi(A)_W$ corresponding to the $e_{ij} \otimes e_{i'j'}(W)$ component is exactly $d_{(j,j'),(i,i')}$. There is an example of such a graph in Figure \ref{figure-graph.EA.example}, corresponding to the matrix
$$\pi(A)_W = d_1 \cdot e_{11} \otimes e_{21}(W) + d_2 \cdot e_{k-2,k-2} \otimes e_{21}(W) + d_3 \cdot e_{k-1,k-2} \otimes e_{22}(W) + d_4 \cdot e_{11} \otimes e_{|W|-2,|W|-2}(W).$$

\begin{figure}[ht] \[
\xymatrix@=0.4cm{
		& e_0(W) & e_1(W) & e_2(W) & \cdots & e_{|W|-2}(W) & e_{|W|-1}(W) \\
e_0 & \bullet & \bullet & \bullet & \cdots & \bullet & \bullet \\
e_1 & \bullet & \bullet \ar@[red][r]^{d_1} & \bullet & \cdots & \bullet \ar@[red]@(ul,ur)_{d_4} & \bullet \\
 & \vdots & \vdots & \vdots & \ddots & \vdots & \vdots \\
e_{k-2} & \bullet & \bullet \ar@[red][r]^{d_2} & \bullet \ar@[red][d]^{d_3} & \cdots & \bullet & \bullet \\
e_{k-1} & \bullet & \bullet & \bullet & \cdots & \bullet & \bullet \\
} \]
\caption{An example of a graph $E_A(W)$}\label{figure-graph.EA.example}
\end{figure}
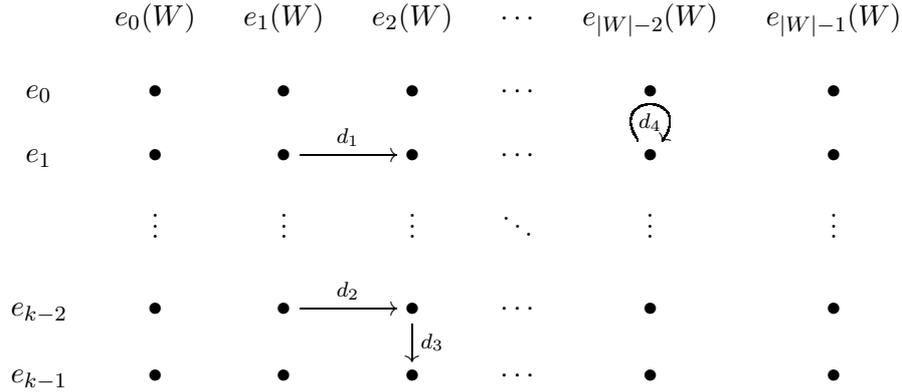

If we denote by $\calG\text{r}_A(W)$ the set consisting of the connected components of the graph $E_A(W)$ and by $A_C$, for $C \in \calG\text{r}_A(W)$, the adjacency-labeled matrix of the corresponding connected component $C$, then the matrix $\pi(A)_W$ is similar to the block-diagonal matrix $\text{diag}(A_{C_1},A_{C_2},...,A_{C_r})$, being $\calG\text{r}_A(W) = \{C_1,...,C_r\}$.
As a consequence, and by using Proposition \ref{proposition-vNdim.computation}, we obtain the formula
\begin{equation}\label{equation-Betti.no.final}
\text{dim}_{\calA}(\ker \, A) = \sum_{W \in \V} \sum_{C \in \calG\text{r}_A(W)} \mu(W) \text{dim}_K(\ker \, A_C),
\end{equation}
which will be used in Section \ref{section-lamplighter.algebra} for explicit computations of $\ell^2$-Betti numbers.

The following lemma, whose proof is trivial, is an adaptation of \cite[Lemma 20]{Gra16} in our notation, and provides a graphical way for computing dimensions of kernels of finite matrices.

\begin{lemma}[Flow Lemma at $(e_i, e_{i'}(W))$]\label{lemma-flow.lemma}
An element $\alpha = \sum_{j,j'} \lambda_{(j,j')} e_j \otimes e_{j'}(W) \in K^k \otimes K^{|W|}$ belongs to the kernel of the matrix $\pi(A)_W$ if and only if, for every vertex $(e_i, e_{i'}(W))$,
$$\sum_{j,j'} \lambda_{(j,j')} d_{(j,j'),(i,i')} = 0.$$
\end{lemma}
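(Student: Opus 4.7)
The statement is essentially a coordinate-wise unpacking of what it means for a linear map, expressed in a chosen basis, to annihilate a vector. My plan is therefore to write $\pi(A)_W$ explicitly in the matrix-unit basis of $M_k(K)\otimes M_{|W|}(K)$, apply it to a general $\alpha$, and read off the coefficient at the basis vector $e_i\otimes e_{i'}(W)$.

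First, I would observe that by the very definition of the labels $d_{(j,j'),(i,i')}$ and the fact that $\{e_{ij}\otimes e_{i'j'}(W)\}$ is a complete system of matrix units dual (via $\langle\cdot,\cdot\rangle$) to the orthonormal basis $\pazB$, the matrix $\pi(A)_W$ admits the expansion
$$\pi(A)_W \;=\; \sum_{i,j=0}^{k-1}\;\sum_{i',j'=0}^{|W|-1} d_{(j,j'),(i,i')}\, e_{ij}\otimes e_{i'j'}(W).$$
Indeed, pairing both sides against $e_a\otimes e_{a'}(W)$ on the right and $e_b\otimes e_{b'}(W)$ on the left using $\langle \cdot,\cdot \rangle$ reproduces the defining formula for $d_{(a,a'),(b,b')}$, so this really is the correct expression.

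Next, I would apply this expansion to $\alpha=\sum_{a,a'}\lambda_{(a,a')}\,e_a\otimes e_{a'}(W)$ and use the matrix-unit relation $(e_{ij}\otimes e_{i'j'}(W))\cdot(e_a\otimes e_{a'}(W)) = \delta_{j,a}\delta_{j',a'}\,e_i\otimes e_{i'}(W)$ given in the paragraph preceding the lemma. Collapsing the deltas yields
$$\pi(A)_W\cdot\alpha \;=\; \sum_{i,i'}\Bigl(\sum_{j,j'} \lambda_{(j,j')}\, d_{(j,j'),(i,i')}\Bigr)\, e_i\otimes e_{i'}(W).$$
Since $\pazB=\{e_i\otimes e_{i'}(W)\}$ is a $K$-basis of $K^k\otimes K^{|W|}$, the element $\pi(A)_W\cdot\alpha$ is zero if and only if each of its coordinates in $\pazB$ vanishes, which is exactly the stated condition at every vertex $(e_i,e_{i'}(W))$. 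There is no genuine obstacle here; the only mild care needed is to keep the order of indices consistent between the definition of the labels (which pairs the source and target of an arrow) and the matrix-multiplication computation (where $j,j'$ play the role of column indices of $\pi(A)_W$, i.e.\ the source vertex).
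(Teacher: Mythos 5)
Your argument is correct and is precisely the ``trivial'' coordinate-wise unpacking the paper has in mind: the paper itself does not spell out a proof, remarking only that the lemma's proof is trivial and that it is an adaptation of Lemma~20 of Grabowski's paper. Your expansion of $\pi(A)_W$ in the matrix-unit basis, the application to $\alpha$, and the reading off of coordinates in $\pazB$ is exactly what that remark is pointing to, with the indexing convention handled consistently.
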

One can think of this system of equations in a graphical way: if we think of the variable $\lambda_{(i,i')}$ as the label of the vertex $(e_i, e_{i'}(W))$, then the Flow Lemma at this vertex is telling us that
$$\sum_{\substack{\text{all arrows having} \\ \text{range } (i,i')}} (\text{label of the arrow}) \cdot (\text{label of the source of the arrow}) = 0.$$
To see how exactly the Flow Lemma works explicitly, we refer the reader to the appendix given in \cite{Gra16}, where some applications of it in concrete examples are described.

\section{The lamplighter group algebra}\label{section-lamplighter.algebra}


In this section we apply the construction given in Section \ref{section-approx.crossed.product} to the lamplighter group algebra. This algebra is of great relevance because, among other things, it gave the first counterexample to the Strong Atiyah Conjecture (SAC), see for example \cite{GZ}, \cite{DiSc}. We refer the reader to \cite[Section 5]{AC2}, where a further application of the construction from Section \ref{section-approx.crossed.product} to a wider class of group algebras is given.

\begin{definition}
The lamplighter group $\Gamma$ is defined to be the wreath product of the finite group of two elements, $\Z_2$, by $\Z$. In other words,
$$\Gamma = \Z_2 \wr \Z = \Big( \bigoplus_{i \in \Z} \Z_2 \Big) \rtimes_{\sigma} \Z,$$
where the action implementing the semidirect product is the well-known Bernoulli shift $\sigma$ defined by
$$\sigma_n(x)_i = x_{i+n} \quad \text{ for }x = (x_i) \in \bigoplus_{i \in \Z} \Z_2.$$
If we denote by $t$ the generator corresponding to $\Z$ and by $a_i$ the generator corresponding to the $i^{\text{th}}$ copy of $\Z_2$, we have the following presentation for $\Gamma$:
$$\Gamma = \langle t,\{a_i\}_{i \in \Z} \mid a_i^2 , a_ia_ja_ia_j, ta_it^{-1}a_{i-1} \text{ for } i,j \in \Z \rangle.$$
\end{definition}

Let now $K \subseteq \C$ be a subfield of $\C$ closed under complex conjugation, which will be the involution on $K$. 
As in \cite[Section 6]{AC2}, the Fourier transform $\pazF$ gives a $*$-isomorphism $K\Gamma \cong C_K(X) \rtimes_T \Z =: \calA$ through the identifications
$$1 \mapsto \chi_X, \quad e_i := \frac{1 + a_i}{2} \mapsto \chi_{U_i}, \quad t \mapsto t,$$
where $X = \{0,1\}^{\Z}$ is the Cantor set, $T$ the shift map defined by $T(x)_i = x_{i+1}$ for $x = (x_i) \in X$, and $U_i$ is the clopen set consisting of all points $x \in X$ having a $0$ at the $i^{\text{th}}$ component. 

Given $\epsilon_{-k},...,\epsilon_l \in \{0,1\}$, the cylinder set $\{ x=(x_i) \in X \mid x_{-k} = \epsilon_{-k},...,x_l = \epsilon_l \}$ will be throughout denoted by $[\epsilon_{-k} \cdots \underline{\epsilon_0} \cdots \epsilon_l]$. The collection of all the cylinder sets constitutes a basis for the topology of $X$.

We take $\mu$ to be the product measure on $X$, where the $\big( \frac{1}{2}, \frac{1}{2} \big)$-measure on each component $\{0,1\}$ is considered. It is well-known (see e.g. \cite[Example 3.1]{KM}) that $\mu$ is an ergodic, full and shift-invariant probability measure on $X$. Therefore by Theorem \ref{theorem-rank.function} we can construct a Sylvester matrix rank function $\rk_{\calA}$ on $\calA = C_K(X) \rtimes_T \Z$, as exposed in Section \ref{section-approx.crossed.product}.

\begin{proposition}\label{proposition-coincide.lamplighter.group.alg}
As in Subsection \ref{subsection-Betti.numbers}, let $\rk_{K\Gamma}$ be the Sylvester matrix rank function on $K\Gamma$ inherited from $\calU(\Gamma)$. Then $\rk_{K\Gamma}$ and $\rk_{\calA}$ coincide through the Fourier transform $\pazF : K\Gamma \cong \calA$. As a consequence, the $\ell^2$-Betti number of a matrix operator $A \in M_k(K\Gamma)$ equals its $\calA$-Betti number, that is,
\begin{equation}\label{equation-vN.dim.equality}
b^{(2)}(A) = b^{\calA}(\pazF(A)).
\end{equation}
Hence, $\calC(\calA) = \calC(\Gamma,K)$ and $\calG(\calA) = \calG(\Gamma,K)$.
\end{proposition}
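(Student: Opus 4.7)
The plan is to invoke the uniqueness part of Theorem \ref{theorem-rank.function}. Let $\rk' := \rk_{K\Gamma} \circ \pazF^{-1}$ be the Sylvester matrix rank function on $\calA$ obtained by transporting $\rk_{K\Gamma}$ through the Fourier transform. To conclude $\rk' = \rk_{\calA}$, it suffices to show that $\rk'$ is extremal and satisfies the characterizing property \eqref{equation-unique.property}, i.e.\ $\rk'(\chi_U) = \mu(U)$ for every clopen subset $U \subseteq X$.

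For the characterizing property: since $\rk_{K\Gamma}$ is the restriction of the canonical rank function on $\calU(\Gamma)$, it satisfies $\rk_{K\Gamma}(p) = \tr_{\calN(\Gamma)}(p)$ on projections $p \in K\Gamma$. Because every clopen $U \subseteq X$ is a finite disjoint union of cylinder sets, and both sides are additive on orthogonal projections and on disjoint unions respectively, it is enough to check the equality on cylinders. The cylinder $[\epsilon_{-k}\cdots \underline{\epsilon_0}\cdots \epsilon_l]$ corresponds under $\pazF^{-1}$ to the product $\prod_{j=-k}^{l} \tfrac{1 + (-1)^{\epsilon_j} a_j}{2}$, a projection in $K\Gamma$. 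Expanding this product, the only group element that contributes to the coefficient of the identity $e$ is $e$ itself, with coefficient $1/2^{k+l+1}$. Therefore its trace in $\calN(\Gamma)$ is precisely $1/2^{k+l+1} = \mu([\epsilon_{-k}\cdots \underline{\epsilon_0}\cdots \epsilon_l])$, as desired.

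For extremality of $\rk'$: the lamplighter group $\Gamma$ is ICC, hence $\calN(\Gamma)$ is a ${\rm II}_1$-factor, and consequently the rank function on $\calU(\Gamma)$ (and its restriction to any $*$-subring that becomes rank-dense in $\calU(\Gamma)$) is extremal. Alternatively, one can directly cite \cite[Proposition 5.10]{AC2}, where exactly this extremality statement is established in the present setup. The potentially delicate point---and the one I would prepare most carefully---is this extremality of the restricted rank function: while extremality on $\calU(\Gamma)$ is automatic from the factor property, transferring it to $K\Gamma$ (or $\calA$) needs the fact that $\calA$ generates a rank-dense subalgebra of $\gotR_{\rk} = \ol{\calA}$, which is precisely the content of Theorem \ref{theorem-rank.function}.

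Once $\rk' = \rk_{\calA}$ is established, the equality of $\ell^2$-Betti numbers \eqref{equation-vN.dim.equality} is immediate from combining Definition \ref{definition-vN.dim.cross.prod} with equation \eqref{equation-vN.dim.rank.UG}: for $A \in M_k(K\Gamma)$,
$$b^{(2)}(A) = k - \rk_{K\Gamma}(A) = k - \rk'(\pazF(A)) = k - \rk_{\calA}(\pazF(A)) = b^{\calA}(\pazF(A)).$$
The claimed equalities $\calC(\calA) = \calC(\Gamma,K)$ and $\calG(\calA) = \calG(\Gamma,K)$ then follow at once from the definitions of these sets and of the subgroup they generate, since the Fourier transform restricts to a $*$-isomorphism $M_k(K\Gamma) \cong M_k(\calA)$ for every $k \geq 1$.
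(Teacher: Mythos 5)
Your proposal is correct and follows essentially the same route as the paper, which simply invokes the uniqueness clause of Theorem \ref{theorem-rank.function} together with the observation made in \cite[Subsection 6.1]{AC2}; you fill in the computation that the paper leaves implicit. Your verification of property \eqref{equation-unique.property} on cylinder sets via the Fourier transform is exactly the right content, and the reduction to cylinders by additivity on orthogonal projections is sound.

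One cautionary remark on the extremality step. You are right to flag it as the delicate point, but the ICC argument as phrased has a small gap: the rank-completion of $K\Gamma$ under $\rk_{K\Gamma}$ is not a priori all of $\calU(\Gamma)$ (Theorem \ref{theorem-rank.function} only identifies the completion of $\calA$ as $\gotR_{\rk}$, and deducing anything about the completion of $K\Gamma$ from this already presupposes the identification $\rk_{K\Gamma} = \rk_{\calA}$ you are trying to prove), so the claim ``restriction to any rank-dense $*$-subring of $\calU(\Gamma)$ is extremal'' cannot be applied to $K\Gamma$ without further argument. Whether extremality is needed at all depends on the precise formulation of the uniqueness in Theorem \ref{theorem-rank.function}: if $\rk_{\calA}$ is unique among \emph{all} Sylvester matrix rank functions satisfying \eqref{equation-unique.property}, the property check alone suffices and the extremality discussion is redundant. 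Since you hedge by also citing \cite[Proposition 5.10]{AC2}, which establishes exactly the statement needed, the proof as written closes correctly; I would simply lead with that citation rather than the II$_1$-factor heuristic.
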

\begin{proof}
Both $\rk_{K\Gamma}$ and $\rk_{\calA}$ coincide by Theorem \ref{theorem-rank.function}, as observed in \cite[Subsection 6.1]{AC2}. The result follows from Formula \eqref{equation-vN.dim.rank.UG}.
\end{proof}

\subsection{The approximating algebras \texorpdfstring{$\calA_n$}{} for the lamplighter group algebra}\label{subsection-approximating.algebras.lamplighter}

In this subsection we summarize the contents of \cite[Subsection 6.1]{AC2} in relation to the approximating algebras $\calA_n$ in the particular case of the lamplighter group algebra. We identify $K\Gamma = C_K(X) \rtimes_T \Z$ under the Fourier transform.

We take $E_n = [1 \cdots \underline{1} \cdots 1]$ (with $2n+1$ one's) for the sequence of clopen sets, whose intersection gives the point $y = (...,1,\underline{1},1,...) \in X$. We also take the partitions $\calP_n$ of the complements $X \backslash E_n$ to be
$$\calP_n = \{[0 0 \cdots \underline{0} \cdots 0 0], [0 0 \cdots \underline{0} \cdots 0 1], ..., [0 1 \cdots \underline{1} \cdots 1 1]\}.$$
Let $\calA_n := \calA(E_n, \calP_n)$ be the unital $*$-subalgebra of $\calA = C_K(X) \rtimes_T \Z$ generated by the partial isometries $\{\chi_Z t \mid Z \in \calP_n\}$. 
The set $\V_n$ consists of the non-empty $W$ of the following types:
\begin{enumerate}[a),leftmargin=1cm]
\item $W_0 = [1 1 \cdots \underline{1} \cdots 1 1 1]$ of length $1$ (there are $2n+2$ ones);
\item $W_1 = [1 1 \cdots \underline{1} \cdots 1 1 0 1 1 \dots 1 \cdots 1 1]$ of length $2n+2$ (there are $4n+2$ ones, and a zero);
\item $W(\epsilon_1,...,\epsilon_l) = [1 1 \cdots \underline{1} \cdots 1 1 0 \epsilon_1 \cdots \epsilon_l 0 1 1 \cdots 1 \cdots 1 1]$ of length $(2n+3)+l$,
\end{enumerate}
where in the last type $l \geq 0$, and each $\epsilon_i \in \{0,1\}$, but with at most $2n$ consecutive ones in the sequence $(\epsilon_1,...,\epsilon_l)$. We understand $l = 0$ as having no $\epsilon_i$, so there are only $2$ zeros in the middle of the corresponding $W$. 

We now recall the definition of the $m$-step Fibonacci sequence, as in \cite[Definition 6.2]{AC2}.
\begin{definition}\label{definition-fibonacci.numbers}
Fix an integer $m \geq 2$. For $k \in \Z^+$, we define the $k^{\text{th}}$ $m$-acci number, denoted by $\text{Fib}_m(k)$, recursively by setting
$$\text{Fib}_m(0) = 0, \quad \text{Fib}_m(1) = 1 \quad \text{ and } \quad \text{Fib}_m(l) = 2^{l-2} \text{ for } 2 \leq l \leq m-1,$$
and for $r \in \Z^+$,
$$\text{Fib}_m(r+m) = \text{Fib}_m(r+m-1) + \cdots + \text{Fib}_m(r).$$
\end{definition}

\begin{lemma}[Lemma 6.3 of \cite{AC2}]\label{lemma-fibonacci.numbers}
For integers $k,m \geq 2$, $\emph{Fib}_m(k)$ is exactly the number of possible sequences $(\epsilon_1,...,\epsilon_l)$ of length $l = k-2$ that one can construct with zeroes and ones, but having at most $m-1$ consecutive one's.
\end{lemma}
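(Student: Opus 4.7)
The plan is a direct induction, matching the combinatorial count with the $m$-step Fibonacci recursion. Let $g(l)$ denote the number of $\{0,1\}$-sequences of length $l$ having at most $m-1$ consecutive ones, so the lemma is equivalent to the statement $g(l) = \mathrm{Fib}_m(l+2)$ for all $l \geq 0$ (i.e.\ $g(k-2) = \mathrm{Fib}_m(k)$ for $k \geq 2$).

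The first step is to derive a recursion for $g$ in the range $l \geq m$. Here no valid sequence can begin with the block $1^m$, so every valid sequence of length $l$ must begin with $1^j 0$ for some $0 \leq j \leq m-1$, followed by an arbitrary valid tail of length $l-j-1 \geq 0$. Summing over $j$ yields
\begin{equation*}
g(l) = g(l-1) + g(l-2) + \cdots + g(l-m),
\end{equation*}
which is precisely the $m$-step Fibonacci recursion that $\mathrm{Fib}_m(l+2)$ satisfies on the same range.

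The second step is to match initial values on $0 \leq l \leq m-1$. On the combinatorial side, any length-$l$ binary string with $l \leq m-1$ automatically has at most $m-1$ consecutive ones, hence $g(l) = 2^l$. On the $\mathrm{Fib}_m$ side, the identity $\mathrm{Fib}_m(l+2) = 2^l$ is built into the definition for $2 \leq l+2 \leq m-1$; for the two borderline indices $l = m-2$ and $l = m-1$ one unwinds the recursion once or twice and telescopes the resulting geometric sum to confirm $\mathrm{Fib}_m(m) = 2^{m-2}$ and $\mathrm{Fib}_m(m+1) = 2^{m-1}$. With matching initial values and a shared recursion, a standard induction on $l \geq m$ finishes the proof.

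There is no substantive obstacle here: the lemma is a routine combinatorial identity in slight disguise. The only point requiring minor care is bridging the two regimes of the definition of $\mathrm{Fib}_m$ --- the explicit values $2^{l-2}$ on $2 \leq l \leq m-1$ versus the recursion for larger indices --- and this is handled by the short verification in the second step.
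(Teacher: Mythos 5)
Your proof is correct. Note that the paper does not supply its own proof of this lemma; it is quoted verbatim as Lemma 6.3 of \cite{AC2}, so there is no in-paper argument to compare against. Your approach is the natural one: decompose a valid string of length $l \geq m$ according to the length $j$ of its initial run of ones (which must satisfy $0 \leq j \leq m-1$, followed by a mandatory $0$ and an arbitrary valid tail of length $l-j-1$), which yields exactly the $m$-step Fibonacci recursion $g(l) = g(l-1) + \cdots + g(l-m)$, and then check the $m$ initial values $g(l) = 2^l$ for $0 \leq l \leq m-1$ against the shifted sequence $\mathrm{Fib}_m(l+2)$. The only slightly delicate point, which you correctly flag, is that for $l = m-2$ and $l = m-1$ the value $2^l$ is not given directly by the definitional clause $\mathrm{Fib}_m(l) = 2^{l-2}$ (which stops at $l = m-1$) but must be obtained by unwinding the recursion once or twice; the telescoping geometric sums $\mathrm{Fib}_m(m) = (2^{m-3} + \cdots + 2^0) + 1 + 0 = 2^{m-2}$ and $\mathrm{Fib}_m(m+1) = 2^{m-2} + (2^{m-3} + \cdots + 2^0) + 1 = 2^{m-1}$ indeed close this gap. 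The argument also handles the edge case $m = 2$ correctly, where the explicit-value range $2 \leq l \leq m-1$ is empty.
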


We thus have injective $*$-representations $\pi_n : \calA_n \hookrightarrow \gotR_n$ defined by the rule $\pi_n(x) = (h_W \cdot x)_W$, where we have the concrete realization $\gotR_n = K \times \prod_{k \geq 1}M_{2n+1+k}(K)^{\text{Fib}_{2n+1}(k)}$. We will use the sequence $\{\calA_n\}_{n \geq 0}$ in the next subsection for computing rational values of $\ell^2$-Betti numbers arising from $\Gamma$.

\subsection{Some computations of \texorpdfstring{$\ell^2$}{}-Betti numbers for the lamplighter group algebra}\label{subsection-computations.Betti.no.lamplighter}

Although it does not appear in our approximating sequence, we now introduce the algebra $\calA_{1/2}$, which gives the first non-trivial approximating algebra exhibiting irrational behavior of $\ell^2$-Betti numbers. Throughout this subsection $K$ will be any subfield of $\C$ closed under complex conjugation, which will be the involution on $K$.

We take as a non-empty clopen subset $E_{1/2} = [1 \underline{1}]$, and the partition $\calP_{1/2} = \{[0\underline{0}],[0\underline{1}],[1\underline{0}]\}$ of the complement $X \backslash E_{1/2}$. The algebra $\calA_{1/2} := \calA(E_{1/2},\calP_{1/2})$ is then the unital $*$-subalgebra of $\calA = C_K(X) \rtimes_T \Z$ generated by the partial isometries $\{\chi_{[0\underline{0}]}t, \chi_{[0\underline{1}]}t, \chi_{[1\underline{0}]}t\}$; equivalently, it is the unital $*$-subalgebra generated by $s_{-1} = e_{-1}t$ and $s_0 = e_0t$. 

The set $\V_{1/2}$ consists of the non-empty $W$ of the form:

\begin{enumerate}[a),leftmargin=1cm]
\item either $W = [1 \underline{1}1]$ of length $1$, or
\item $W = [1\underline{1}0^{k_1}10^{k_2}1 \cdots 0^{k_r}11]$ of length $k = k_1 + \cdots + k_r + (r+1)$, with $r,k_i \geq 1$,
\end{enumerate}
where $0^{k_i}$ denotes $0 \stackrel{k_i}{\cdots} 0$. It is easily checked that
\begin{align*}
\sum_{W \in \V_{1/2}} |W| \mu(W) = \frac{1}{2^3} + \sum_{r \geq 1} \sum_{k_1,...,k_r \geq 1} \frac{k_1 + \cdots + k_r + (r+1)}{2^{k_1 + \cdots + k_r + (r+3)}} = \frac{1}{2^3} + \sum_{r \geq 1} \frac{3r+1}{2^{r+3}} = 1,
\end{align*}
so indeed the $T$-translates of the $W \in \V_{1/2}$ form a quasi-partition of $X$.

We are now ready to give examples of matrix operators with coefficients in $\Q$, i.e. $A \in M(\Q \Gamma)$, with irrational $\ell^2$-Betti number by using part $(i)$ of Proposition \ref{proposition-coincide.lamplighter.group.alg}, together with the Formula \eqref{equation-Betti.no.final}.
Our main result is the following.

\begin{theorem}\label{theorem-irrational.dimensions}
Fix $n$ a non-negative integer. For $0 \leq l \leq n$, take
$$p_l(x) = a_{0,l} + a_{1,l} x + \cdots + a_{m_l,l}x^{m_l}$$
polynomials of degrees $m_l \geq 1$ with non-negative integer coefficients, and we require that $a_{1,0} \geq 1$. Take also $d_1,...,d_n \geq 2$ natural numbers.

There exists then an element $A$ inside some matrix algebra over $\calA_{1/2}$ with rational coefficients such that
\begin{equation}\label{equation-irrational.vNdim}
b^{(2)}(A) = q_0 + q_1 \sum_{k \geq 1} \frac{1}{2^{p_0(k) + p_1(k)d_1^k + \cdots + p_n(k)d_n^k}} \in \calC(\Gamma,\Q),
\end{equation}
where $q_0, q_1$ are non-zero rational numbers which can be explicitly computed. Therefore we get a collection of irrational and even transcendental $\ell^2$-Betti numbers arising from the lamplighter group $\Gamma$. If moreover $a_{0,0} = 0$, then
$$\sum_{k \geq 1} \frac{1}{2^{p_0(k) + p_1(k)d_1^k + \cdots + p_n(k)d_n^k}} \in \calG(\Gamma,\Q).$$
\end{theorem}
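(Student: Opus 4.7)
The plan is to apply Proposition \ref{proposition-coincide.lamplighter.group.alg} and Formula \eqref{equation-Betti.no.final} inside the approximating algebra $\calA_{1/2}$ of $\calA \cong \Q\Gamma$, and to construct by induction on the number $n$ of exponential levels a matrix operator $A$ over $\calA_{1/2}$ whose associated graphs $E_A(W)$ are engineered so that exactly one family $\{W_k\}_{k \ge 1} \subseteq \V_{1/2}$ contributes a non-trivial kernel dimension, while all remaining $W \in \V_{1/2}$ contribute a convergent rational sum. Setting $P(k) := p_0(k) + \sum_{i=1}^n p_i(k) d_i^k$ and recalling that $\mu(W) = 2^{-(|W|+2)}$, each $W_k$ must have length $P(k) - 2$, so that its contribution in \eqref{equation-Betti.no.final} is a rational multiple of $2^{-P(k)}$.

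Concretely, after reparametrizing each non-trivial $W \in \V_{1/2}$ by a tuple $(k_1,\dots,k_r)$ with $|W|=k_1+\cdots+k_r+r+1$, I would assemble $A$ from ``building blocks'' in the generators $s_{-1}=e_{-1}t$ and $s_0 = e_0 t$ of $\calA_{1/2}$. Following the strategy of \cite{Gra16}, these partial isometries act on the matrix-unit basis $\{e_j(W)\}$ as conditional shifts that advance the index only when the relevant entry of $W$ is $0$ or $1$ respectively; suitable $M_N(\Q)$-valued combinations then produce, for each $W$, an edge-labeled graph whose connected components are paths or cycles whose lengths are explicit combinatorial functions of $(k_1,\dots,k_r)$. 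A single-gap detector yields the base series $\sum_k 2^{-k}$; wrapping it inside a base-$d_i$ ``carry'' block replaces the linear exponent $k$ by $d_i^k$; and direct-summing $p_i(k)$ copies of a block introduces the polynomial prefactor $p_i(k)$ without altering $\mu(W_k)$. Iterating these transformations produces the full exponent $P(k)$, with the hypothesis $a_{1,0} \geq 1$ ensuring that the innermost block is an honest partial isometry supported on a non-empty cylinder.

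The central verification, carried out via the Flow Lemma \ref{lemma-flow.lemma}, is a case analysis on the multi-index of $W$: one shows that $\dim_{\Q}\ker A_C$ is an explicitly computable rational for each connected component $C$ of $E_A(W)$; that exactly one component per $k$ contributes a fixed positive integer over a $W_k$ of length $P(k)-2$; and that every other contribution sums to a convergent rational $q_0$. Combining this with Formula \eqref{equation-Betti.no.final} yields \eqref{equation-irrational.vNdim}; transcendence of $\sum_k 2^{-P(k)}$ then follows from classical lacunarity/Liouville-type criteria, since $P(k)$ grows at least exponentially (each $d_i \ge 2$ and each $p_i$ has positive degree). The main obstacle is precisely this combinatorial bookkeeping: one must rule out any ``spurious'' $W$ whose graph accidentally produces a non-rational contribution of the same shape, which I would handle by induction on $n$, each step reducing the analysis of $E_A(W)$ to that of a smaller building block over a shorter sub-word of $W$. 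Finally, for the addendum when $a_{0,0} = 0$ I would refine the construction so that the prefactor $q_1$ becomes $1$; then $\sum_k 2^{-P(k)} = b^{(2)}(A) - q_0 \in \calG(\Gamma,\Q)$, using that $q_0 \in \Z[1/2] \subseteq \calG(\Gamma,\Q)$ via the finite $2$-subgroups of $\Gamma$.
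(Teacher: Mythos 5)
Your proposal correctly identifies the framework (work in $\calA_{1/2}$, use Proposition~\ref{proposition-coincide.lamplighter.group.alg}, Formula~\eqref{equation-Betti.no.final} and the Flow Lemma~\ref{lemma-flow.lemma}, build $A$ so that the connected components of $E_A(W)$ test relations between the gap lengths $(k_1,\dots,k_r)$ of $W$), and your endgame using $\Z[1/2]\subseteq\calG(\Gamma,\Q)$ via finite $2$-subgroups is a valid alternative to the paper's use of \cite[Corollary 6.14]{AG}. However, the central mechanism you describe is not the one the construction supports, and as stated it has a real gap.

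You propose to ``engineer'' $A$ so that \emph{exactly one} $W_k\in\V_{1/2}$ of length $P(k)-2$ contributes a non-trivial kernel dimension per $k\geq 1$, with all other $W$ contributing only to a rational $q_0$. But the graph $E_A(W)$ is built from local data of the word $W$: for a $W$ with multi-index $(k_1,\dots,k_r)$, the same type of connected component appears once for \emph{every} consecutive pair $(k_i,k_{i+1})$, $i=1,\dots,r-1$, not just once. So the ``bonus'' kernel dimension is of the form $\sum_{i=1}^{r-1}\delta_{k_{i+1},f(k_i)}$, which fires on a large family of $W$ (all $r\geq 2$, all $i$, and all free $k_j$ with $j\notin\{i,i+1\}$). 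There is no way, within this framework, to single out one $W_k$ per $k$; moreover the many $W$ of a given length would have to be controlled, and the length constraint $|W_k|=P(k)-2$ is not what the local edge rules can detect. The paper's actual mechanism is the opposite: let \emph{all} those $W$ contribute, then observe that
\[
\sum_{r\geq 2}\sum_{k_1,\dots,k_r\geq 1}\frac{1}{2^{k_1+\cdots+k_r+r+3}}\sum_{i=1}^{r-1}\delta_{k_{i+1},f(k_i)}
= \Big(\sum_{r\geq 2}\frac{r-1}{2^{r+3}}\Big)\Big(\sum_{k\geq 1}\frac{1}{2^{k+f(k)}}\Big)
= \frac{1}{8}\sum_{k\geq 1}\frac{1}{2^{k+f(k)}},
\]
because the weight $2^{-(k_1+\cdots+k_r)}$ factorizes and the free $k_j$ sums collapse to $1$. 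Choosing $f(k)=p_0(k)-k-a_{0,0}+\sum_i p_i(k)d_i^k$ then produces the desired exponent; the hypotheses $a_{1,0}\geq 1$ (and $m_0\geq 1$) are needed precisely so that $f(k)\geq 1$ for all $k$, making the delta satisfiable. Your description of the blocks is also too loose to verify: ``direct-summing $p_i(k)$ copies of a block'' cannot be taken literally since $A$ has fixed matrix size while $k$ varies — the polynomial prefactor has to be encoded as coefficients $-a_{j,i}$ on specific edges, accumulated via the Flow Lemma, and the factor $d_i^{k}$ arises from geometrically-weighted chains (edges labeled $-d_i$), not from a separate ``carry'' block. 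Until the delta-counting mechanism and the weighted-sum collapse are stated explicitly, the argument does not close.
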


\begin{proof}
Since the proof of the theorem is quite technical, we will start by giving some examples of elements whose $\ell^2$-Betti number behave as stated, first by just considering a single polynomial $p(k)$ and then by considering $p(k) d^k$. After that, we will present an explicit element having $\ell^2$-Betti number as in \eqref{equation-irrational.vNdim}.

We start by giving a concrete example, $p(x) = 2 + x + x^2$. Consider the element from $M_{11}(\calA_{1/2})$ given by
\begin{equation*}
\begin{split}
A = & - \chi_{[\underline{0}0]} t^{-1} \cdot e_{1,1} - \chi_{[\underline{0}]} \cdot e_{2,1} - \chi_{[\underline{0}0]} t^{-1} \cdot e_{2,2} \\
& - \chi_{[1\underline{0}]} \cdot e_{3,2} - \chi_{[0 \underline{0}]} t \cdot e_{3,3} - \chi_{[\underline{0}1]} \cdot e_{4,3} \\
& - \chi_{[\underline{0}0]} t^{-1} \cdot e_{4,4} - \chi_{[\underline{0}]} \cdot e_{5,4} - \chi_{[\underline{0}0]} t^{-1} \cdot e_{5,5} \\
& - \chi_{[1\underline{0}]} \cdot e_{6,5} - \chi_{[0\underline{0}]} t \cdot e_{6,6} - \chi_{[\underline{0}1]} \cdot e_{0,6} \\
& - \chi_{[01\underline{0}]} t^2 \cdot e_{8,1} - \chi_{[0\underline{0}]}t \cdot e_{8,8} + \chi_{[\underline{0}]} \cdot e_{9,8} \\
& - \chi_{[0\underline{0}]}t \cdot e_{9,9} - \chi_{[\underline{0}1]} \cdot e_{10,9} \\
& - \chi_{[01\underline{0}]}t \cdot e_{9,7} + \chi_{[\underline{0}10]}t^{-1} \cdot e_{0,7} \\
& + \chi_{[\underline{0}]} \cdot ( \text{Id}_{11} - e_{0,0} - e_{7,7} - e_{10,10} ) - \chi_{[\underline{0}1]} \cdot e_{1,1} \in M_{11}(\Q \Gamma),
\end{split}
\end{equation*}

\noindent being the family $\{e_{i,j} \mid 0 \leq i,j < 11 \}$ a complete system of matrix units for $M_{11}(\Q)$\footnote{It should be mentioned here that this example is similar to the one given in \cite[Section 4]{Gra16}, although some differences are present: apart from the fact that the element is not the same, our construction realizes the `levels' $e_i$ as a result of allowing matrix algebras over $\calA_{1/2}$.}. If we take $W = [1\underline{1}1]$ we observe that $\pi(A)_W$ equals the zero $11 \times 11$ matrix, so its kernel has dimension $11$. Figure \ref{figure-graph.EA.W} gives the prototypical graph $E_A(W)$ that appears in case one takes a $W$ of the second form $[1\underline{1}0^{k_1}10^{k_2}1 \cdots 0^{k_r}11]$ with length $k = k_1 + \cdots + k_r + (r+1)$. By having in mind the Formula \eqref{equation-Betti.no.final}, we study the different types of connected components of the graph $E_A(W)$. From now on, in the diagrams each straight line should be labeled with a $+1$, and each dotted line with a $-1$. Nevertheless, we will explicitly write down some of the labels when necessary in the diagrams, and we will use straight lines in such cases.

\begin{sidewaysfigure}
\centering
\myRule{0.67\textheight}{0.67\textheight}
\[
\xymatrix@R=0.5cm@C=0.3cm{
 & \scriptstyle{e_0(W)} & \scriptstyle{e_1(W)} & \cdots & \scriptstyle{e_{k_1}(W)} & \scriptstyle{e_{k_1+1}(W)} & \scriptstyle{e_{k_1+2}(W)} & \cdots & \scriptstyle{e_{k_1+k_2+1}(W)} & \scriptstyle{e_{k_1 + k_2 + 2}(W)} & \cdots & \cdots & \scriptstyle{e_{k-k_r-1}(W)} & \cdots & \scriptstyle{e_{k-2}(W)} & \scriptstyle{e_{k-1}(W)} \\
e_0 & \bullet & \bullet & \cdots & \bullet & \bullet & \bullet & \cdots & \bullet & \bullet & \cdots & \cdots & \bullet & \cdots & \bullet & \bullet \\
e_1 & \bullet & \bullet \ar@[red]@(ul,ur) \ar@[red]@{.>}[d] & \cdots \ar@[red]@{.>}[l] & \bullet \ar@[red]@{.>}[d] \ar@[red]@{.>}[l] \ar@/^1pc/@[red]@{.>}[dddddddrr] & \bullet & \bullet \ar@[red]@(ul,ur) \ar@[red]@{.>}[d] & \cdots \ar@[red]@{.>}[l] & \bullet \ar@[red]@{.>}[d] \ar@[red]@{.>}[l] \ar@/^1pc/@[red]@{.>}[dddddddrr] & \bullet & \cdots & \cdots & \bullet \ar@[red]@(ul,ur) \ar@[red]@{.>}[d] & \cdots \ar@[red]@{.>}[l] & \bullet \ar@[red]@{.>}[d] \ar@[red]@{.>}[l] & \bullet \\
e_2 & \bullet & \bullet \ar@[red]@(ul,ur) \ar@[red]@{.>}[d] & \cdots \ar@[red]@{.>}[l] & \bullet \ar@[red]@(ul,ur) \ar@[red]@{.>}[l] & \bullet & \bullet \ar@[red]@(ul,ur) \ar@[red]@{.>}[d] & \cdots \ar@[red]@{.>}[l] & \bullet \ar@[red]@(ul,ur) \ar@[red]@{.>}[l] & \bullet & \cdots & \cdots & \bullet \ar@[red]@(ul,ur) \ar@[red]@{.>}[d] & \cdots \ar@[red]@{.>}[l] & \bullet \ar@[red]@(ul,ur) \ar@[red]@{.>}[l] & \bullet \\
e_3 & \bullet & \bullet \ar@[red]@(ul,ur) \ar@[red]@{.>}[r] & \cdots \ar@[red]@{.>}[r] & \bullet \ar@[red]@(ul,ur) \ar@[red]@{.>}[d] & \bullet & \bullet \ar@[red]@(ul,ur) \ar@[red]@{.>}[r] & \cdots \ar@[red]@{.>}[r] & \bullet \ar@[red]@(ul,ur) \ar@[red]@{.>}[d] & \bullet & \cdots & \cdots & \bullet \ar@[red]@(ul,ur) \ar@[red]@{.>}[r] & \cdots \ar@[red]@{.>}[r] & \bullet \ar@[red]@(ul,ur) \ar@[red]@{.>}[d] & \bullet \\
e_4 & \bullet & \bullet \ar@[red]@(ul,ur) \ar@[red]@{.>}[d] & \cdots \ar@[red]@{.>}[l] & \bullet \ar@[red]@(ul,ur) \ar@[red]@{.>}[d] \ar@[red]@{.>}[l] & \bullet & \bullet \ar@[red]@(ul,ur) \ar@[red]@{.>}[d] & \cdots \ar@[red]@{.>}[l] & \bullet \ar@[red]@(ul,ur) \ar@[red]@{.>}[d] \ar@[red]@{.>}[l] & \bullet & \cdots & \cdots & \bullet \ar@[red]@(ul,ur) \ar@[red]@{.>}[d] & \cdots \ar@[red]@{.>}[l] & \bullet \ar@[red]@(ul,ur) \ar@[red]@{.>}[d] \ar@[red]@{.>}[l] & \bullet \\
e_5 & \bullet & \bullet \ar@[red]@(ul,ur) \ar@[red]@{.>}[d] & \cdots \ar@[red]@{.>}[l] & \bullet \ar@[red]@(ul,ur) \ar@[red]@{.>}[l] & \bullet & \bullet \ar@[red]@(ul,ur) \ar@[red]@{.>}[d] & \cdots \ar@[red]@{.>}[l] & \bullet \ar@[red]@(ul,ur) \ar@[red]@{.>}[l] & \bullet & \cdots & \cdots \ar@/^0.5pc/@[red]@{.>}[dddr] & \bullet \ar@[red]@(ul,ur) \ar@[red]@{.>}[d] & \cdots \ar@[red]@{.>}[l] & \bullet \ar@[red]@(ul,ur) \ar@[red]@{.>}[l] & \bullet \\
e_6 & \bullet & \bullet \ar@[red]@(ul,ur) \ar@[red]@{.>}[r] & \cdots \ar@[red]@{.>}[r] & \bullet \ar@[red]@(ul,ur) \ar@/_2pc/@[red]@{.>}[uuuuuu] & \bullet & \bullet \ar@[red]@(ul,ur) \ar@[red]@{.>}[r] & \cdots \ar@[red]@{.>}[r] & \bullet \ar@[red]@(ul,ur) \ar@/_2pc/@[red]@{.>}[uuuuuu] & \bullet & \cdots & \cdots & \bullet \ar@[red]@(ul,ur) \ar@[red]@{.>}[r] & \cdots \ar@[red]@{.>}[r] & \bullet \ar@[red]@(ul,ur) \ar@/_2pc/@[red]@{.>}[uuuuuu] & \bullet \\
e_7 & \bullet & \bullet & \cdots & \bullet & \bullet \ar@[red]@{.>}[ddr] \ar@/_1pc/@[red][uuuuuuul] & \bullet & \cdots & \bullet & \bullet \ar@[red]@{.>}[ddr] \ar@/_1pc/@[red][uuuuuuul] & \cdots & \cdots \ar@[red]@{.>}[ddr] & \bullet & \cdots & \bullet & \bullet \\
e_8 & \bullet & \bullet \ar@[red]@(ul,ur) \ar@[red][d] \ar@[red]@{.>}[r] & \cdots \ar@[red]@{.>}[r] & \bullet \ar@[red]@(ul,ur) \ar@[red][d] & \bullet & \bullet \ar@[red]@(ul,ur) \ar@[red][d] \ar@[red]@{.>}[r] & \cdots \ar@[red]@{.>}[r] & \bullet \ar@[red]@(ul,ur) \ar@[red][d] & \bullet & \cdots & \cdots & \bullet \ar@[red]@(ul,ur) \ar@[red][d] \ar@[red]@{.>}[r] & \cdots \ar@[red]@{.>}[r] & \bullet \ar@[red]@(ul,ur) \ar@[red][d] & \bullet \\
e_9 & \bullet & \bullet \ar@[red]@(ul,ur) \ar@[red]@{.>}[r] & \cdots \ar@[red]@{.>}[r] & \bullet \ar@[red]@(ul,ur) \ar@[red]@{.>}[d] & \bullet & \bullet \ar@[red]@(ul,ur) \ar@[red]@{.>}[r] & \cdots \ar@[red]@{.>}[r] & \bullet \ar@[red]@(ul,ur) \ar@[red]@{.>}[d] & \bullet & \cdots & \cdots & \bullet \ar@[red]@(ul,ur) \ar@[red]@{.>}[r] & \cdots \ar@[red]@{.>}[r] & \bullet \ar@[red]@(ul,ur) \ar@[red]@{.>}[d] & \bullet \\
e_{10} & \bullet & \bullet & \cdots & \bullet & \bullet & \bullet & \cdots & \bullet & \bullet & \cdots & \cdots & \bullet & \cdots & \bullet & \bullet
}
\]
\caption{The graph $E_A(W)$ for a $W = [1\underline{1}0^{k_1}10^{k_2}1 \cdots 0^{k_r}11]$ of length $k$. Each straight line should be labeled with a $+1$, and each dotted line with a $-1$.}
\label{figure-graph.EA.W}
\end{sidewaysfigure}
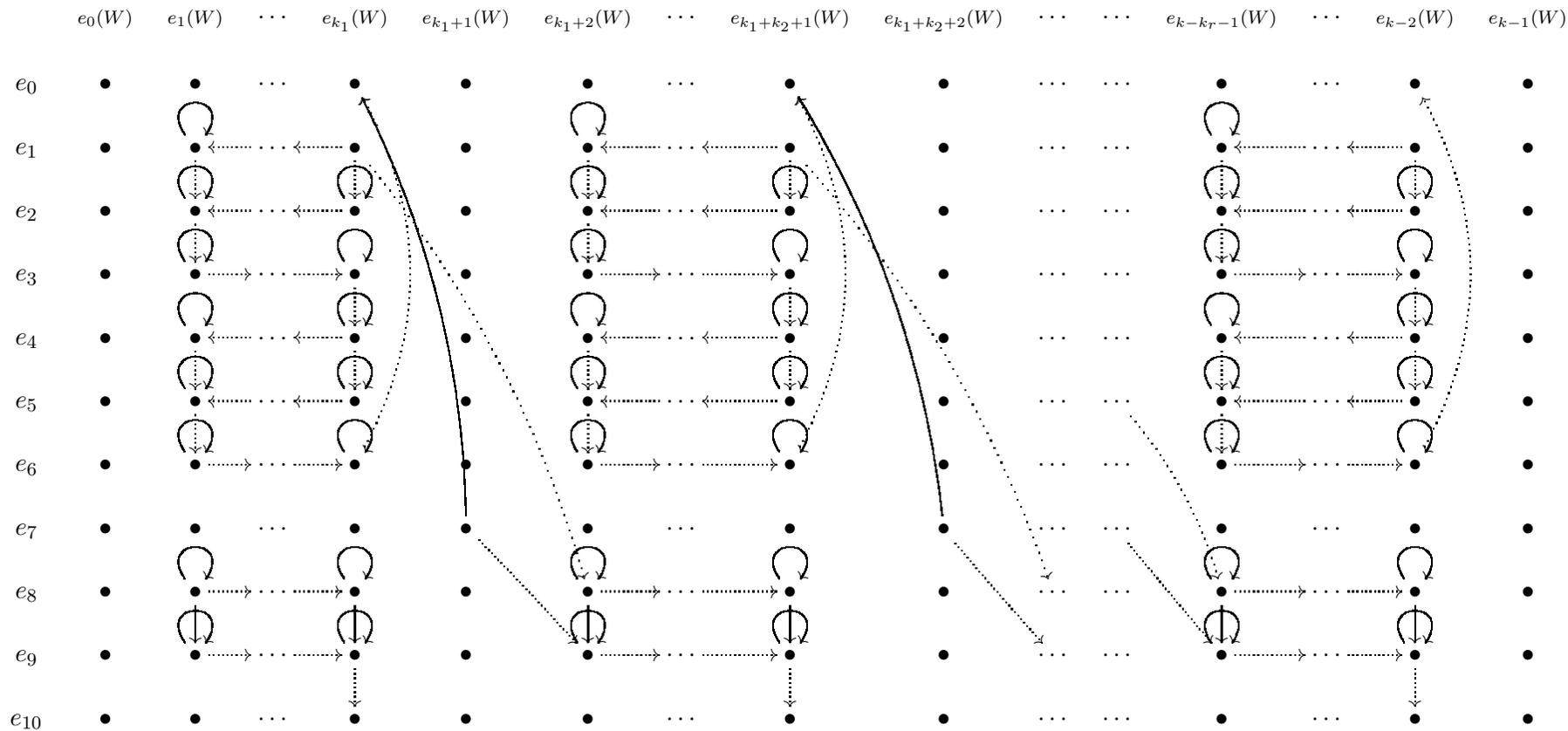

\noindent In this concrete example we have four different types of connected components $C$ for the graph, namely:
\begin{enumerate}[a),leftmargin=0.8cm]
\item $C_1$, given by the graphs with only one vertex $\bullet$. Note that in this case $\text{dim}_{\Q}(\ker \, A_{C_1}) = 1$, and we have $12 + 8r + 3(k_1+\cdots + k_r)$ connected components of this kind.
\item $C_2$, given by the graph
\vspace{0.2cm}
\begin{equation*}
\xymatrix @R=0.4cm @C=0.9cm {
 \bullet \ar@[red]@(ul,ur) \ar@[red][d] \ar@[red]@{.>}[r] & \bullet \ar@[red]@(ul,ur) \ar@[red][d] \ar@[red]@{.>}[r] & \cdots \ar@[red]@{.>}[r] & \bullet \ar@[red]@(ul,ur) \ar@[red][d] \ar@[red]@{.>}[r] & \bullet \ar@[red]@(ul,ur) \ar@[red][d] \\
 \bullet \ar@[red]@(ul,ur) \ar@[red]@{.>}[r] & \bullet \ar@[red]@(ul,ur) \ar@[red]@{.>}[r] & \cdots \ar@[red]@{.>}[r] & \bullet \ar@[red]@(ul,ur) \ar@[red]@{.>}[r] & \bullet \ar@[red]@(ul,ur) \ar@[red]@{.>}[d] \\
 &  &  &  & \bullet \\
}
\end{equation*}
Let us apply the Flow Lemma to compute $\text{dim}_{\Q}(\ker \, A_{C_2})$. We write $\lambda_{(i,j)}$ to denote the labels of the vertices of the $i^{\text{th}}$ row, for $1 \leq i \leq 3$ and $1 \leq j \leq k_1$. Here $k_1$ corresponds to the number of columns of the graph. We analyze the case where $k_1 \geq 2$, being the case $k_1=1$ completely analogous, and giving the same final result. Note that for $i = 3$ we only have the vertex corresponding to $j=k_1$. The Flow Lemma gives the following set of equations for the vertices:
\begin{equation*}
\begin{cases}
\lambda_{(1,1)} = 0, \\
\lambda_{(2,1)} + \lambda_{(1,1)} = 0, \\
\text{for } 1 \leq j \leq k_1-1, \\
\begin{cases}
\lambda_{(1,j+1)} - \lambda_{(1,j)} = 0, \\         
\lambda_{(2,j+1)} + \lambda_{(1,j+1)} - \lambda_{(2,j)} = 0,
\end{cases} \\
- \lambda_{(2,k_1)} = 0.
\end{cases}
\end{equation*}
The solution of this system of equations is $\lambda_{(1,j)} = \lambda_{(2,j)} = 0$ for all $1 \leq j \leq k_1$, giving one degree of freedom, namely $\lambda_{(3,k_1)}$. Therefore $\text{dim}_{\Q}(\ker \, A_{C_2}) = 1$. We only have one connected component of this kind.
\item $C_3$, given by the graph
\vspace{-0.1cm}
\begin{equation*}
\xymatrix @R=0.4cm @C=0.9cm {	
  & & & & \bullet \\
 \bullet \ar@[red]@(ul,ur) \ar@[red]@{.>}[d] & \bullet \ar@[red]@(ul,ur) \ar@[red]@{.>}[d] \ar@[red]@{.>}[l] & \cdots \ar@[red]@{.>}[l] & \bullet \ar@[red]@(ul,ur) \ar@[red]@{.>}[d] \ar@[red]@{.>}[l] & \bullet \ar@[red]@{.>}[d] \ar@[red]@{.>}[l] \\
 \bullet \ar@[red]@(ul,ur) \ar@[red]@{.>}[d] & \bullet \ar@[red]@(ul,ur) \ar@[red]@{.>}[l] & \cdots \ar@[red]@{.>}[l] & \bullet \ar@[red]@(ul,ur) \ar@[red]@{.>}[l] & \bullet \ar@[red]@(ul,ur) \ar@[red]@{.>}[l] \\
 \bullet \ar@[red]@(ul,ur) \ar@[red]@{.>}[r] & \bullet \ar@[red]@(ul,ur) \ar@[red]@{.>}[r] & \cdots \ar@[red]@{.>}[r] & \bullet \ar@[red]@(ul,ur) \ar@[red]@{.>}[r] & \bullet \ar@[red]@(ul,ur) \ar@[red]@{.>}[d] \\
 \bullet \ar@[red]@(ul,ur) \ar@[red]@{.>}[d] & \bullet \ar@[red]@(ul,ur) \ar@[red]@{.>}[d] \ar@[red]@{.>}[l] & \cdots \ar@[red]@{.>}[l] & \bullet \ar@[red]@(ul,ur) \ar@[red]@{.>}[d] \ar@[red]@{.>}[l] & \bullet \ar@[red]@(ul,ur) \ar@[red]@{.>}[d] \ar@[red]@{.>}[l] \\
 \bullet \ar@[red]@(ul,ur) \ar@[red]@{.>}[d] & \bullet \ar@[red]@(ul,ur) \ar@[red]@{.>}[l] & \cdots \ar@[red]@{.>}[l] & \bullet \ar@[red]@(ul,ur) \ar@[red]@{.>}[l] & \bullet \ar@[red]@(ul,ur) \ar@[red]@{.>}[l] \\
 \bullet \ar@[red]@(ul,ur) \ar@[red]@{.>}[r] & \bullet \ar@[red]@(ul,ur) \ar@[red]@{.>}[r] & \cdots \ar@[red]@{.>}[r] & \bullet \ar@[red]@(ul,ur) \ar@[red]@{.>}[r] & \bullet \ar@[red]@(ul,ur) \ar@/_2pc/@[red]@{.>}[uuuuuu] \\
} 
\end{equation*}
Let us apply again the Flow Lemma to compute $\text{dim}_{\Q}(\ker \, A_{C_3})$. The labels of the vertices of the $i^{\text{th}}$ row will again be denoted by $\lambda_{(i,j)}$, for $1 \leq i \leq 7$ and $1 \leq j \leq k_r$. We restrict ourselves to the case where $k_r \geq 2$ (the case $k_r=1$ gives the same result for $\text{dim}_{\Q}(\ker \, A_{C_3})$). Note that for $i = 1$ we only have the vertex corresponding to $j=k_r$. The Flow Lemma gives the following set of equations for the vertices:
\begin{equation*}
\begin{cases}
\text{for } 1 \leq j \leq k_r-1, \\
\begin{cases}
\lambda_{(2,j)} - \lambda_{(2,j+1)} = 0, \\
\lambda_{(4,j+1)} - \lambda_{(4,j)} = 0, \\
\lambda_{(5,j)} - \lambda_{(5,j+1)} = 0, \\
\lambda_{(7,j+1)} - \lambda_{(7,j)} = 0,
\end{cases} 
\end{cases}
\begin{cases}
\lambda_{(3,k_r)} - \lambda_{(2,k_r)} = 0, \\
\lambda_{(4,1)} - \lambda_{(3,1)} = 0, \\
\lambda_{(5,k_r)} - \lambda_{(4,k_r)} = 0, \\
\lambda_{(6,k_r)} - \lambda_{(5,k_r)} = 0, \\
\lambda_{(7,1)} - \lambda_{(6,1)} = 0, \\
- \lambda_{(7,k_r)} = 0,
\end{cases}
\begin{cases}
\text{for } 1 \leq j \leq k_r-1, \\
\begin{cases}
\lambda_{(3,j)} - \lambda_{(2,j)} - \lambda_{(3,j+1)} = 0, \\
\lambda_{(6,j)} - \lambda_{(5,j)} - \lambda_{(6,j+1)} = 0,
\end{cases} 
\end{cases}
\end{equation*}
The first set of equations gives $\lambda_{(2,j)} = \lambda_{(2,1)}, \lambda_{(4,j)} = \lambda_{(4,1)}, \lambda_{(5,j)} = \lambda_{(5,1)}$, and $\lambda_{(7,j)} = \lambda_{(7,1)}$ for all $1 \leq j \leq k_r$. Analogously, the third set of equations gives $\lambda_{(3,j)} = \lambda_{(3,1)} - (j-1)\lambda_{(2,1)}$, and $\lambda_{(6,j)} = \lambda_{(6,1)} - (j-1)\lambda_{(5,1)}$ for all $2 \leq j \leq k_r$. For now, we are left with $\lambda_{(1,k_r)}$, and $\lambda_{(i,1)}$ for all $2 \leq i \leq 7$ as possible degrees of freedom. But the second set of equations then implies $\lambda_{(2,1)} = \lambda_{(3,1)} - (k_r-1)\lambda_{(2,1)}, \lambda_{(3,1)} = \lambda_{(4,1)} = \lambda_{(5,1)}, \lambda_{(5,1)} = -(k_r-1)\lambda_{(5,1)}$ and $\lambda_{(6,1)} = \lambda_{(7,1)} = 0$. Solving this gives $\lambda_{(i,1)} = 0$ for all $2 \leq i \leq 7$.

The conclusion is that we have only one degree of freedom, namely $\lambda_{(1,k_r)}$, and so $\text{dim}_{\Q}(\ker \, A_{C_3}) = 1$. We only have one connected component of this kind.
\item Finally $C_4$, given by the graphs
\vspace{-0.3cm}
\begin{equation*}
\xymatrix @R=0.6cm @C=0.9cm {
  & & & & \bullet & & & & & & \\
 \bullet \ar@[red]@(ul,ur) \ar@[red]@{.>}[d] & \bullet \ar@[red]@(ul,ur) \ar@[red]@{.>}[d] \ar@[red]@{.>}[l] & \cdots \ar@[red]@{.>}[l] & \bullet \ar@[red]@(ul,ur) \ar@[red]@{.>}[d] \ar@[red]@{.>}[l] & \bullet \ar@[red]@{.>}[d] \ar@[red]@{.>}[l] \ar@/^2pc/@[red]@{.>}[dddddddrr] & & & & & & \\
 \bullet \ar@[red]@(ul,ur) \ar@[red]@{.>}[d] & \bullet \ar@[red]@(ul,ur) \ar@[red]@{.>}[l] & \cdots \ar@[red]@{.>}[l] & \bullet \ar@[red]@(ul,ur) \ar@[red]@{.>}[l] & \bullet \ar@[red]@(ul,ur) \ar@[red]@{.>}[l] & & & & & & \\
 \bullet \ar@[red]@(ul,ur) \ar@[red]@{.>}[r] & \bullet \ar@[red]@(ul,ur) \ar@[red]@{.>}[r] & \cdots \ar@[red]@{.>}[r] & \bullet \ar@[red]@(ul,ur) \ar@[red]@{.>}[r] & \bullet \ar@[red]@(ul,ur) \ar@[red]@{.>}[d] & & & & & & \\
 \bullet \ar@[red]@(ul,ur) \ar@[red]@{.>}[d] & \bullet \ar@[red]@(ul,ur) \ar@[red]@{.>}[d] \ar@[red]@{.>}[l] & \cdots \ar@[red]@{.>}[l] & \bullet \ar@[red]@(ul,ur) \ar@[red]@{.>}[d] \ar@[red]@{.>}[l] & \bullet \ar@[red]@(ul,ur) \ar@[red]@{.>}[d] \ar@[red]@{.>}[l] & & & & & & \\
 \bullet \ar@[red]@(ul,ur) \ar@[red]@{.>}[d] & \bullet \ar@[red]@(ul,ur) \ar@[red]@{.>}[l] & \cdots \ar@[red]@{.>}[l] & \bullet \ar@[red]@(ul,ur) \ar@[red]@{.>}[l] & \bullet \ar@[red]@(ul,ur) \ar@[red]@{.>}[l] & & & & & & \\
 \bullet \ar@[red]@(ul,ur) \ar@[red]@{.>}[r] & \bullet \ar@[red]@(ul,ur) \ar@[red]@{.>}[r] & \cdots \ar@[red]@{.>}[r] & \bullet \ar@[red]@(ul,ur) \ar@[red]@{.>}[r] & \bullet \ar@[red]@(ul,ur) \ar@/_2pc/@[red]@{.>}[uuuuuu] & & & & & & \\
 & & & & & \bullet \ar@[red]@{.>}[ddr] \ar@/_1pc/@[red][uuuuuuul] & & & & & \\
 & & & & & & \bullet \ar@[red]@(ul,ur) \ar@[red][d] \ar@[red]@{.>}[r] & \bullet \ar@[red]@(ul,ur) \ar@[red][d] \ar@[red]@{.>}[r] & \cdots \ar@[red]@{.>}[r] & \bullet \ar@[red]@(ul,ur) \ar@[red][d] \ar@[red]@{.>}[r] & \bullet \ar@[red]@(ul,ur) \ar@[red][d] \\
 & & & & & & \bullet \ar@[red]@(ul,ur) \ar@[red]@{.>}[r] & \bullet \ar@[red]@(ul,ur) \ar@[red]@{.>}[r] & \cdots \ar@[red]@{.>}[r] & \bullet \ar@[red]@(ul,ur) \ar@[red]@{.>}[r] & \bullet \ar@[red]@(ul,ur) \ar@[red]@{.>}[d] \\
 & & & & & & & & & & \bullet \\
}
\end{equation*}
These connected components are the essential part of the whole graph since they give rise to the irrationality of $b^{(2)}(A)$. Note that we have $r-1$ connected components of this kind, and we fix one of them, corresponding to the lengths $k_l$ and $k_{l+1}$ respectively, for $1 \leq l \leq r-1$. Note that $k_l$ coincides with the number of columns of the top-left graph, and $k_{l+1}$ coincides with the number of columns of the bottom-right graph.

Once more, we apply the Flow Lemma to compute $\text{dim}_{\Q}(\ker \, A_{C_4})$. We write $\lambda_{(i,j)}$ to denote the labels of the vertices of the top-left graph, where $1 \leq i \leq 7$ are the rows, and $1 \leq j \leq k_l$ are the columns. Note that for $i=1$ we only have the vertex corresponding to $j=k_l$. Similarly, $\mu_{(i,j)}$ will denote the labels of the vertices of the bottom-right graph, where now $1 \leq i \leq 3$ and $1 \leq j \leq k_{l+1}$. As before, note that for $i=3$ we only have the vertex corresponding to $j=k_{l+1}$. We use $\rho$ to denote the label of the isolated vertex being in the middle of the graph. We have the following set of equations:
\begin{equation*}
\begin{cases}
\text{for } 1 \leq j \leq k_l-1, \\
\begin{cases}
\lambda_{(2,j)} - \lambda_{(2,j+1)} = 0, \\
\lambda_{(4,j+1)} - \lambda_{(4,j)} = 0, \\
\lambda_{(5,j)} - \lambda_{(5,j+1)} = 0, \\
\lambda_{(7,j+1)} - \lambda_{(7,j)} = 0,
\end{cases} 
\end{cases}
\begin{cases}
\lambda_{(3,k_l)} - \lambda_{(2,k_l)} = 0, \\
\lambda_{(4,1)} - \lambda_{(3,1)} = 0, \\
\lambda_{(5,k_l)} - \lambda_{(4,k_l)} = 0, \\
\lambda_{(6,k_l)} - \lambda_{(5,k_l)} = 0, \\
\lambda_{(7,1)} - \lambda_{(6,1)} = 0, \\
\rho - \lambda_{(7,k_l)} = 0,
\end{cases}
\begin{cases}
\text{for } 1 \leq j \leq k_l-1, \\
\begin{cases}
\lambda_{(3,j)} - \lambda_{(2,j)} - \lambda_{(3,j+1)} = 0, \\
\lambda_{(6,j)} - \lambda_{(5,j)} - \lambda_{(6,j+1)} = 0,
\end{cases} 
\end{cases}
\end{equation*}
\begin{equation*}
\begin{cases}
\mu_{(1,1)} - \lambda_{(2,k_l)} = 0, \\
\mu_{(2,1)} + \mu_{(1,1)} - \rho = 0, \\
\text{for } 1 \leq j \leq k_{l+1}-1, \\
\begin{cases}
\mu_{(1,j+1)} - \mu_{(1,j)} = 0, \\         
\mu_{(2,j+1)} + \mu_{(1,j+1)} - \mu_{(2,j)} = 0,
\end{cases} \\
- \mu_{(2,k_{l+1})} = 0.
\end{cases}
\end{equation*}
The first set of equations gives $\lambda_{(2,j)} = \lambda_{(2,1)}, \lambda_{(4,j)} = \lambda_{(4,1)}, \lambda_{(5,j)} = \lambda_{(5,1)}$, and $\lambda_{(7,j)} = \lambda_{(7,1)}$ for all $1 \leq j \leq k_l$. Analogously, the third set of equations gives $\lambda_{(3,j)} = \lambda_{(3,1)} - (j-1)\lambda_{(2,1)}$, and $\lambda_{(6,j)} = \lambda_{(6,1)} - (j-1)\lambda_{(5,1)}$ for all $2 \leq j \leq k_l$. Now the fourth set of equations gives $\mu_{(1,j)} = \mu_{(1,1)}$, and $\mu_{(2,j)} = \mu_{(2,1)} - (j-1) \mu_{(1,1)}$ for all $2 \leq j \leq k_{l+1}$, together with $\mu_{(1,1)} = \lambda_{(2,k_l)} = \lambda_{(2,1)}$, $\mu_{(2,1)} + \mu_{(1,1)} = \rho$ and $\mu_{(2,k_{l+1})} = 0$.

For now, the possible degrees of freedom reduces to $\lambda_{(1,k_l)}$, $\lambda_{(i,1)}$ for all $2 \leq i \leq 7$, $\rho$, and $\mu_{(3,k_{l+1})}$. But imposing the restrictions coming from the second set of equations gives $\rho = \lambda_{(7,1)} = \lambda_{(6,1)} = k_l \lambda_{(5,1)} = k_l \lambda_{(4,1)} = k_l \lambda_{(3,1)} = k_l^2 \lambda_{(2,1)}$, with the condition $(k_l^2 - k_{l+1})\lambda_{(2,1)} = 0$. This implies that the possible degrees of freedom are now $\lambda_{(1,k_l)}$, $\lambda_{(2,1)}$, and $\mu_{(3,k_{l+1})}$, together with the restriction $(k_l^2-k_{l+1}) \lambda_{(2,1)} = 0$.

The conclusion is that, depending on whether $k_l^2 = k_{l+1}$ or not, we have either three degrees of freedom or two, respectively. Thus we get
\[ \text{dim}_{\Q}(\ker \, A_{C_4}) = \left\{ \begin{array}{lr}
																				3 & \text{ if $k_{l+1} = k_l^2$} \\
																				2 & \text{ otherwise}
																			 \end{array}\right\} = 2 + \delta_{k_{l+1},k_l^2}. \]
As commented, we have $r-1$ connected components of this kind.
\end{enumerate}
We can now compute the $l^2$-Betti number of $A$ using Proposition \ref{proposition-vNdim.computation} and Formula \eqref{equation-Betti.no.final}:
\vspace{-0.4cm}
\begin{align*}
b^{(2)}(A) & = \sum_{W \in \V_{1/2}} \mu(W) \text{dim}_{\Q}(\ker \, \pi(A)_W) \\
& = \mu([1\ul{1}1]) \text{dim}_{\Q}(\ker \, \pi(A)_{[1\ul{1}1]}) + \sum_{r \geq 1} \sum_{k_1,...,k_r \geq 1} \mu([1\ul{1}0^{k_1}10^{k_2} \cdots 10^{k_r}11]) \Big( \sum_{i=1}^4 \dim_{\Q}(\ker \, A_{C_i}) \Big) \\
& = \frac{11}{2^3} + \sum_{r \geq 1} \sum_{k_1,...,k_r \geq 1} \frac{1}{2^{k_1+\cdots+k_r+(r+3)}} \Big[(12+8r+3(k_1+\cdots+k_r))+1+1+\sum_{l=1}^{r-1}(2 + \delta_{k_{l+1},k_l^2}) \Big] \\
& = \frac{11}{8} + \frac{1}{8} \sum_{r \geq 1} \frac{1}{2^r} \Big[ \sum_{k_1,...,k_r \geq 1} \frac{12+10r}{2^{k_1} \cdots 2^{k_r}} + 3 \sum_{k_1,...,k_r \geq 1} \frac{k_1 + \cdots + k_r}{2^{k_1} \cdots 2^{k_r}} + \sum_{k_1,...,k_r \geq 1} \sum_{l=1}^{r-1} \frac{\delta_{k_{l+1},k_l^2}}{2^{k_1} \cdots 2^{k_r}} \Big] \\
& = \frac{11}{8} + \frac{1}{8} \sum_{r \geq 1} \frac{1}{2^r} \Big[ (12+10r) \Big(\sum_{k \geq 1} \frac{1}{2^k} \Big)^r + 3r \Big( \sum_{k \geq 1} \frac{k}{2^k} \Big)\Big(\sum_{k \geq 1} \frac{1}{2^k}\Big)^{r-1} \\
& \hspace{6.52cm} + (r-1)\Big( \sum_{k \geq 1} \frac{1}{2^{k^2+k}} \Big) \Big( \sum_{k \geq 1} \frac{1}{2^k} \Big)^{r-2} \Big] \\
& = \frac{11}{8} + \frac{1}{8} \sum_{r \geq 1}\frac{12+16r}{2^r} + \frac{1}{8}\Big( \sum_{r \geq 1} \frac{r-1}{2^r} \Big) \Big( \sum_{k \geq 1}\frac{1}{2^{k^2+k}}\Big) \\
& = \frac{55}{8} + \frac{1}{2} \sum_{k \geq 1} \frac{1}{2^{k^2+k+2}} \approx 6.9082337619...
\vspace{-0.4cm}
\end{align*}
which is an irrational number (its binary expansion is clearly non-periodic).

With this example in mind, we now construct an element whose $\ell^2$-Betti number equals
$$\vspace{-0.1cm}\frac{12n+31}{8} + \frac{2^{a_0}}{8} \sum_{k \geq 1}\frac{1}{2^{p(k)}},\vspace{-0.1cm}$$
being $p(x) = a_0 + a_1x + \cdots + a_nx^n$ a polynomial of degree $n \geq 2$ with non-negative integer coefficients, whose linear coefficient satisfies $a_1 \geq 1$. 
We consider the element from $M_{3n+5}(\calA_{1/2})$ given by

\begin{equation*}
\begin{split}
A = & - \chi_{[\underline{0}0]} t^{-1} \cdot e_{1,1} - \chi_{[\underline{0}]} \cdot e_{2,1} - \chi_{[\underline{0}0]} t^{-1} \cdot e_{2,2} \\
& - \chi_{[1\underline{0}]} \cdot e_{3,2} - \chi_{[0 \underline{0}]} t \cdot e_{3,3} - \chi_{[\underline{0}1]} \cdot e_{4,3} - (a_1 - 1) \chi_{[\underline{0}1]} \cdot e_{0,3} \\
& + \sum_{i=1}^{n-2} \Big( - \chi_{[\underline{0}0]} t^{-1} \cdot e_{3i+1,3i+1} - \chi_{[\underline{0}]} \cdot e_{3i+2,3i+1} - \chi_{[\underline{0} 0]} t^{-1} \cdot e_{3i+2,3i+2} \\
& \quad - \chi_{[1\underline{0}]} \cdot e_{3i+3,3i+2} - \chi_{[0 \underline{0}]} t \cdot e_{3i+3,3i+3} - \chi_{[\underline{0}1]} \cdot e_{3i+4,3i+3} - a_{i+1} \chi_{[\underline{0}1]} \cdot e_{0,3i+3} \Big) \\
& - \chi_{[\underline{0} 0]} t^{-1} \cdot e_{3n-2,3n-2} - \chi_{[\underline{0}]} \cdot e_{3n-1,3n-2} - \chi_{[\underline{0} 0]} t^{-1} \cdot e_{3n-1,3n-1} \\
& - \chi_{[1\underline{0}]} \cdot e_{3n,3n-1} - \chi_{[0 \underline{0}]} t \cdot e_{3n,3n} - a_n \chi_{[\underline{0}1]} \cdot e_{0,3n} \\
& - \chi_{[01\underline{0}]} t^2 \cdot e_{3n+2,1} - \chi_{[0 \underline{0}]}t \cdot e_{3n+2,3n+2} + \chi_{[\underline{0}]} \cdot e_{3n+3,3n+2} \\
& - \chi_{[0 \underline{0}]}t \cdot e_{3n+3,3n+3} - \chi_{[\underline{0}1]} \cdot e_{3n+4,3n+3} - \chi_{[01\underline{0}]}t \cdot e_{3n+3,3n+1} + \chi_{[\underline{0}10]}t^{-1} \cdot e_{0,3n+1} \\
& + \chi_{[\underline{0}]} \cdot \left( \text{Id}_{3n+5} - e_{0,0} - e_{3n+1,3n+1} - e_{3n+4,3n+4} \right) - \chi_{[\underline{0}1]} \cdot e_{1,1} \in M_{3n+5}(\Q \Gamma).
\end{split}
\end{equation*}
\vspace{0.2cm}

Again, $\pi(A)_W$ gives the zero $(3n+5) \times (3n+5)$ matrix for $W = [1 \underline{1}1]$, so its kernel has dimension $3n+5$. For a $W = [1\underline{1}0^{k_1}10^{k_2}1 \cdots 0^{k_r}11]$ of length $k = k_1 + \cdots + k_r + (r+1)$, its graph $E_A(W)$ has four different types of connected components $C$, namely:
\begin{enumerate}[a),leftmargin=0.8cm]
\item $C_1$, given by the graphs with only one vertex $\bullet$. Here $\text{dim}_{\Q}(\ker \, A_{C_1}) = 1$, and we have $(3n+6)+(3n+2)r + 3(k_1+\cdots + k_r)$ connected components of this kind.
\item $C_2$, given by the graph
\vspace{0.2cm}
\begin{equation*}
\xymatrix @R=0.4cm @C=0.9cm {
 \bullet \ar@[red]@(ul,ur) \ar@[red][d] \ar@[red]@{.>}[r] & \bullet \ar@[red]@(ul,ur) \ar@[red][d] \ar@[red]@{.>}[r] & \cdots \ar@[red]@{.>}[r] & \bullet \ar@[red]@(ul,ur) \ar@[red][d] \ar@[red]@{.>}[r] & \bullet \ar@[red]@(ul,ur) \ar@[red][d] \\
 \bullet \ar@[red]@(ul,ur) \ar@[red]@{.>}[r] & \bullet \ar@[red]@(ul,ur) \ar@[red]@{.>}[r] & \cdots \ar@[red]@{.>}[r] & \bullet \ar@[red]@(ul,ur) \ar@[red]@{.>}[r] & \bullet \ar@[red]@(ul,ur) \ar@[red]@{.>}[d] \\
 &  &  &  & \bullet \\
} 
\end{equation*}
The Flow Lemma gives $\text{dim}_{\Q}(\ker \, A_{C_2}) = 1$. We only have one connected component of this kind.
\item $C_3$, given by the graph
\vspace{0.0cm}
\begin{equation*}
\xymatrix @R=0.45cm @C=0.9cm {
  & & & & \bullet \\
 \bullet \ar@[red]@(ul,ur) \ar@[red]@{.>}[d] & \bullet \ar@[red]@(ul,ur) \ar@[red]@{.>}[d] \ar@[red]@{.>}[l] & \cdots \ar@[red]@{.>}[l] & \bullet \ar@[red]@(ul,ur) \ar@[red]@{.>}[d] \ar@[red]@{.>}[l] & \bullet \ar@[red]@{.>}[d] \ar@[red]@{.>}[l] \\
 \bullet \ar@[red]@(ul,ur) \ar@[red]@{.>}[d] & \bullet \ar@[red]@(ul,ur) \ar@[red]@{.>}[l] & \cdots \ar@[red]@{.>}[l] & \bullet \ar@[red]@(ul,ur) \ar@[red]@{.>}[l] & \bullet \ar@[red]@(ul,ur) \ar@[red]@{.>}[l] \\
 \bullet \ar@[red]@(ul,ur) \ar@[red]@{.>}[r] & \bullet \ar@[red]@(ul,ur) \ar@[red]@{.>}[r] & \cdots \ar@[red]@{.>}[r] & \bullet \ar@[red]@(ul,ur) \ar@[red]@{.>}[r] & \bullet \ar@[red]@(ul,ur) \ar@/_4pc/@[red][uuu]|-{-(a_1-1)} \ar@[red]@{.>}[d] \\
 \bullet \ar@[red]@(ul,ur) \ar@[red]@{.>}[d] & \bullet \ar@[red]@(ul,ur) \ar@[red]@{.>}[d] \ar@[red]@{.>}[l] & \cdots \ar@[red]@{.>}[l] & \bullet \ar@[red]@(ul,ur) \ar@[red]@{.>}[d] \ar@[red]@{.>}[l] & \bullet \ar@[red]@(ul,ur) \ar@[red]@{.>}[d] \ar@[red]@{.>}[l] \\
 \bullet \ar@[red]@(ul,ur) \ar@[red]@{.>}[d] & \bullet \ar@[red]@(ul,ur) \ar@[red]@{.>}[l] & \cdots \ar@[red]@{.>}[l] & \bullet \ar@[red]@(ul,ur) \ar@[red]@{.>}[l] & \bullet \ar@[red]@(ul,ur) \ar@[red]@{.>}[l] \\
 \bullet \ar@[red]@(ul,ur) \ar@[red]@{.>}[r] & \bullet \ar@[red]@(ul,ur) \ar@[red]@{.>}[r] & \cdots \ar@[red]@{.>}[r] & \bullet \ar@[red]@(ul,ur) \ar@[red]@{.>}[r] & \bullet \ar@[red]@(ul,ur) \ar@/_3pc/@[red][uuuuuu]|-(0.45){-a_2} \ar@[red]@{.>}[d] \\
\vdots & \vdots & \ddots & \vdots & \vdots \ar@[red]@{.>}[d] \\
 \bullet \ar@[red]@(ul,ur) \ar@[red]@{.>}[d] & \bullet \ar@[red]@(ul,ur) \ar@[red]@{.>}[d] \ar@[red]@{.>}[l] & \cdots \ar@[red]@{.>}[l] & \bullet \ar@[red]@(ul,ur) \ar@[red]@{.>}[d] \ar@[red]@{.>}[l] & \bullet \ar@[red]@(ul,ur) \ar@[red]@{.>}[d] \ar@[red]@{.>}[l] \\
 \bullet \ar@[red]@(ul,ur) \ar@[red]@{.>}[d] & \bullet \ar@[red]@(ul,ur) \ar@[red]@{.>}[l] & \cdots \ar@[red]@{.>}[l] & \bullet \ar@[red]@(ul,ur) \ar@[red]@{.>}[l] & \bullet \ar@[red]@(ul,ur) \ar@[red]@{.>}[l] \\
 \bullet \ar@[red]@(ul,ur) \ar@[red]@{.>}[r] & \bullet \ar@[red]@(ul,ur) \ar@[red]@{.>}[r] & \cdots \ar@[red]@{.>}[r] & \bullet \ar@[red]@(ul,ur) \ar@[red]@{.>}[r] & \bullet \ar@[red]@(ul,ur) \ar@/_2pc/@[red][uuuuuuuuuu]|-(0.4){-a_n} \\
} 
\end{equation*}
We get $\text{dim}_{\Q}(\ker \, A_{C_3}) = 1$. We only have one connected component of this type.

\item Finally $C_4$, given by the graphs
\vspace{-0.4cm}
\begin{equation*}
\xymatrix @R=0.4cm @C=0.9cm {
  & & & & \bullet & & & & & & \\
 \bullet \ar@[red]@(ul,ur) \ar@[red]@{.>}[d] & \bullet \ar@[red]@(ul,ur) \ar@[red]@{.>}[d] \ar@[red]@{.>}[l] & \cdots \ar@[red]@{.>}[l] & \bullet \ar@[red]@(ul,ur) \ar@[red]@{.>}[d] \ar@[red]@{.>}[l] & \bullet \ar@[red]@{.>}[d] \ar@[red]@{.>}[l] \ar@/^1pc/@[red]@{.>}[dddddddddddrr] & & & & & & \\
 \bullet \ar@[red]@(ul,ur) \ar@[red]@{.>}[d] & \bullet \ar@[red]@(ul,ur) \ar@[red]@{.>}[l] & \cdots \ar@[red]@{.>}[l] & \bullet \ar@[red]@(ul,ur) \ar@[red]@{.>}[l] & \bullet \ar@[red]@(ul,ur) \ar@[red]@{.>}[l] & & & & & & \\
 \bullet \ar@[red]@(ul,ur) \ar@[red]@{.>}[r] & \bullet \ar@[red]@(ul,ur) \ar@[red]@{.>}[r] & \cdots \ar@[red]@{.>}[r] & \bullet \ar@[red]@(ul,ur) \ar@[red]@{.>}[r] & \bullet \ar@[red]@(ul,ur) \ar@/_4pc/@[red][uuu]|-{-(a_1-1)} \ar@[red]@{.>}[d] & & & & & & \\
 \bullet \ar@[red]@(ul,ur) \ar@[red]@{.>}[d] & \bullet \ar@[red]@(ul,ur) \ar@[red]@{.>}[d] \ar@[red]@{.>}[l] & \cdots \ar@[red]@{.>}[l] & \bullet \ar@[red]@(ul,ur) \ar@[red]@{.>}[d] \ar@[red]@{.>}[l] & \bullet \ar@[red]@(ul,ur) \ar@[red]@{.>}[d] \ar@[red]@{.>}[l] & & & & & & \\
 \bullet \ar@[red]@(ul,ur) \ar@[red]@{.>}[d] & \bullet \ar@[red]@(ul,ur) \ar@[red]@{.>}[l] & \cdots \ar@[red]@{.>}[l] & \bullet \ar@[red]@(ul,ur) \ar@[red]@{.>}[l] & \bullet \ar@[red]@(ul,ur) \ar@[red]@{.>}[l] & & & & & & \\
 \bullet \ar@[red]@(ul,ur) \ar@[red]@{.>}[r] & \bullet \ar@[red]@(ul,ur) \ar@[red]@{.>}[r] & \cdots \ar@[red]@{.>}[r] & \bullet \ar@[red]@(ul,ur) \ar@[red]@{.>}[r] & \bullet \ar@[red]@(ul,ur) \ar@/_3pc/@[red][uuuuuu]|-{-a_2} \ar@[red]@{.>}[d] & & & & & & \\
\vdots & \vdots & \ddots & \vdots & \vdots \ar@[red]@{.>}[d] & & & & & & \\
 \bullet \ar@[red]@(ul,ur) \ar@[red]@{.>}[d] & \bullet \ar@[red]@(ul,ur) \ar@[red]@{.>}[d] \ar@[red]@{.>}[l] & \cdots \ar@[red]@{.>}[l] & \bullet \ar@[red]@(ul,ur) \ar@[red]@{.>}[d] \ar@[red]@{.>}[l] & \bullet \ar@[red]@(ul,ur) \ar@[red]@{.>}[d] \ar@[red]@{.>}[l] & & & & & & \\
 \bullet \ar@[red]@(ul,ur) \ar@[red]@{.>}[d] & \bullet \ar@[red]@(ul,ur) \ar@[red]@{.>}[l] & \cdots \ar@[red]@{.>}[l] & \bullet \ar@[red]@(ul,ur) \ar@[red]@{.>}[l] & \bullet \ar@[red]@(ul,ur) \ar@[red]@{.>}[l] & & & & & & \\
 \bullet \ar@[red]@(ul,ur) \ar@[red]@{.>}[r] & \bullet \ar@[red]@(ul,ur) \ar@[red]@{.>}[r] & \cdots \ar@[red]@{.>}[r] & \bullet \ar@[red]@(ul,ur) \ar@[red]@{.>}[r] & \bullet \ar@[red]@(ul,ur) \ar@/_2pc/@[red][uuuuuuuuuu]|-(0.33){-a_n} & & & & & & \\
 & & & & & \bullet \ar@[red]@{.>}[ddr] \ar@/_1pc/@[red][uuuuuuuuuuul] & & & & & \\
 & & & & & & \bullet \ar@[red]@(ul,ur) \ar@[red][d] \ar@[red]@{.>}[r] & \bullet \ar@[red]@(ul,ur) \ar@[red][d] \ar@[red]@{.>}[r] & \cdots \ar@[red]@{.>}[r] & \bullet \ar@[red]@(ul,ur) \ar@[red][d] \ar@[red]@{.>}[r] & \bullet \ar@[red]@(ul,ur) \ar@[red][d] \\
 & & & & & & \bullet \ar@[red]@(ul,ur) \ar@[red]@{.>}[r] & \bullet \ar@[red]@(ul,ur) \ar@[red]@{.>}[r] & \cdots \ar@[red]@{.>}[r] & \bullet \ar@[red]@(ul,ur) \ar@[red]@{.>}[r] & \bullet \ar@[red]@(ul,ur) \ar@[red]@{.>}[d] \\
 & & & & & & & & & & \bullet \\
}
\end{equation*}
Here
\[ \text{dim}_{\Q}(\ker \, A_{C_4}) = \left\{ \begin{array}{lr}
																				3 & \text{ if $k_{i+1} = p(k_i) - k_i - a_0$} \\
																				2 & \text{ otherwise}
																			 \end{array}\right\} = 2 + \delta_{k_{i+1},p(k_i) - k_i - a_0}. \]
We have $r-1$ connected components of this kind.
\end{enumerate}
Putting everything together, we compute
\begin{align*}
b^{(2)}(A) & = \mu([1\ul{1}1]) \text{dim}_{\Q}(\ker \, \pi(A)_{[1\ul{1}1]}) + \sum_{r \geq 1} \sum_{k_1,...,k_r \geq 1} \mu([1\ul{1}0^{k_1}10^{k_2} \cdots 10^{k_r}11]) \Big( \sum_{i=1}^4 \dim_{\Q}(\ker \, A_{C_i}) \Big) \\
& = \frac{3n+5}{2^3} + \sum_{r \geq 1} \sum_{k_1,...,k_r \geq 1} \frac{1}{2^{k_1+ \cdots +k_r + (r+3)}} \Big[ (3n+6) + (3n+4)r + 3(k_1 + \cdots + k_r) \Big] \\
& \hspace{1.68cm} + \sum_{r \geq 1} \sum_{k_1,...,k_r \geq 1} \frac{1}{2^{k_1+ \cdots +k_r + (r+3)}} \Big[ \sum_{i=1}^{r-1} \delta_{k_{i+1},p(k_i) - k_i - a_0} \Big] \\
& = \frac{3n+5}{8} + \frac{1}{8} \sum_{r \geq 1} \frac{(3n+6)+(3n+10)r}{2^r} + \frac{2^{a_0}}{8} \Big(\sum_{r \geq 1} \frac{r-1}{2^r}\Big) \Big( \sum_{k \geq 1} \frac{1}{2^{p(k)}} \Big) \\
& = \frac{12n+31}{8} + \frac{2^{a_0}}{8} \sum_{k \geq 1} \frac{1}{2^{p(k)}},
\end{align*}
which is an irrational number since the degree of the polynomial is at least $2$. Note that, in the special case $a_0 = 0$, we obtain a number of the form $q_0 + \frac{1}{2^m} \alpha$, being $\alpha = \sum_{k \geq 1} \frac{1}{2^{p(k)}}$.

Let us do a step further: we now construct an element with $\ell^2$-Betti number exactly
$$\frac{12n+35}{8} + \frac{1}{8} \sum_{k \geq 1} \frac{1}{2^{k+p(k)d^k}},$$
with $p(x)$ a polynomial of degree at least $1$ with non-negative integer coefficients, and $d \geq 2$ a natural number. We consider the element from $M_{3n+6}(\calA_{1/2})$ given by

\begin{equation*}
\begin{split}
A = & - d \chi_{[1\underline{0}]} \cdot e_{1,2} - \chi_{[0 \underline{0}]} t \cdot e_{1,1} - a_0 \chi_{[\underline{0}1]} \cdot e_{0,1} \\
& - d \chi_{[\underline{0} 0]} t^{-1} \cdot e_{2,2} - \chi_{[\underline{0}]} \cdot e_{3,2} - d \chi_{[\underline{0} 0]} t^{-1} \cdot e_{3,3} \\
& - d \chi_{[1\underline{0}]} \cdot e_{4,3} - \chi_{[0 \underline{0}]} t \cdot e_{4,4} - \chi_{[\underline{0}1]} \cdot e_{5,4} - a_1 \chi_{[\underline{0}1]} \cdot e_{0,4} \\
& + \sum_{i=1}^{n-2} \Big( - \chi_{[\underline{0} 0]} t^{-1} \cdot e_{3i+2,3i+2} - \chi_{[\underline{0}]} \cdot e_{3i+3,3i+2} - \chi_{[\underline{0} 0]} t^{-1} \cdot e_{3i+3,3i+3} \\
& \quad - \chi_{[1\underline{0}]} \cdot e_{3i+4,3i+3} - \chi_{[0 \underline{0}]} t \cdot e_{3i+4,3i+4} - \chi_{[\underline{0}1]} \cdot e_{3i+5,3i+4} - a_{i+1} \chi_{[\underline{0}1]} \cdot e_{0,3i+4} \Big) \\
& - \chi_{[\underline{0} 0]} t^{-1} \cdot e_{3n-1,3n-1} - \chi_{[\underline{0}]} \cdot e_{3n,3n-1} - \chi_{[\underline{0} 0]} t^{-1} \cdot e_{3n,3n} \\
& - \chi_{[1\underline{0}]} \cdot e_{3n+1,3n} - \chi_{[0 \underline{0}]} t \cdot e_{3n+1,3n+1} - a_n \chi_{[\underline{0}1]} \cdot e_{0,3n+1} \\
& - \chi_{[01\underline{0}]} t^2 \cdot e_{3n+3,2} - \chi_{[0 \underline{0}]}t \cdot e_{3n+3,3n+3} + \chi_{[\underline{0}]} \cdot e_{3n+4,3n+3} \\
& - \chi_{[0 \underline{0}]} t \cdot e_{3n+4,3n+4} - \chi_{[\underline{0}1]} \cdot e_{3n+5,3n+4} - \chi_{[01\underline{0}]} t \cdot e_{3n+4,3n+2} + \chi_{[\underline{0}10]} t^{-1} \cdot e_{0,3n+2} \\
& + \chi_{[\underline{0}]} \cdot \left( \text{Id}_{3n+6} - e_{0,0} - e_{3n+2,3n+2} - e_{3n+5,3n+5} \right) - \chi_{[\underline{0}1]} \cdot e_{2,2} \in M_{3n+6}(\Q \Gamma).
\end{split}
\end{equation*}
\text{ }

\noindent For $W = [1 \underline{1}1]$, $\pi(A)_W$ gives the zero $(3n+6) \times (3n+6)$ matrix, so its kernel has dimension $3n+6$. For a $W = [1\underline{1}0^{k_1}10^{k_2}1 \cdots 0^{k_r}11]$ of length $k = k_1 + \cdots + k_r + (r+1)$, its graph $E_A(W)$ has four different types of connected components $C$, namely:
\begin{enumerate}[a),leftmargin=0.8cm]
\item $C_1$, given by the graphs with only one vertex $\bullet$. Here $\text{dim}_{\Q}(\ker \, A_{C_1}) = 1$, and we have $(3n+7) + (3n+3)r + 3(k_1+\cdots + k_r)$ connected components of this type.
\item $C_2$, given by the graph
\vspace{-0.1cm}
\begin{equation*}
\xymatrix @R=0.4cm @C=0.9cm {
 \bullet \ar@[red]@(ul,ur) \ar@[red][d] \ar@[red]@{.>}[r] & \bullet \ar@[red]@(ul,ur) \ar@[red][d] \ar@[red]@{.>}[r] & \cdots \ar@[red]@{.>}[r] & \bullet \ar@[red]@(ul,ur) \ar@[red][d] \ar@[red]@{.>}[r] & \bullet \ar@[red]@(ul,ur) \ar@[red][d] \\
 \bullet \ar@[red]@(ul,ur) \ar@[red]@{.>}[r] & \bullet \ar@[red]@(ul,ur) \ar@[red]@{.>}[r] & \cdots \ar@[red]@{.>}[r] & \bullet \ar@[red]@(ul,ur) \ar@[red]@{.>}[r] & \bullet \ar@[red]@(ul,ur) \ar@[red]@{.>}[d] \\
 &  &  &  & \bullet \\
}
\end{equation*}
We compute $\text{dim}_{\Q}(\ker \, A_{C_2}) = 1$. We only have one connected component of this type.
\item $C_3$, given by the graph
\vspace{-0.3cm}
\begin{equation*}
\xymatrix @R=0.4cm @C=0.9cm {
  & & & & \bullet \\
 \bullet \ar@[red]@(ul,ur) \ar@[red]@{.>}[r] & \bullet \ar@[red]@(ul,ur) \ar@[red]@{.>}[r] & \cdots \ar@[red]@{.>}[r] & \bullet \ar@[red]@(ul,ur) \ar@[red]@{.>}[r] & \bullet \ar@[red]@(ul,ur) \ar@[red][u]|-{\scriptscriptstyle -a_0} \\
 \bullet \ar@[red]@(ul,ur) \ar@[red][u]|-{\scriptscriptstyle -d} \ar@[red]@{.>}[d] & \bullet \ar@[red]@(ul,ur) \ar@[red]@{.>}[d] \ar@[red][l]|-{\scriptscriptstyle (-d)} & \cdots \ar@[red][l]|-{\scriptscriptstyle (-d)} & \bullet \ar@[red]@(ul,ur) \ar@[red]@{.>}[d] \ar@[red][l]|-{\scriptscriptstyle (-d)} & \bullet \ar@[red]@{.>}[d] \ar@[red][l]|-{\scriptscriptstyle (-d)} \\
 \bullet \ar@[red]@(ul,ur) \ar@[red][d]|-{\scriptscriptstyle -d} & \bullet \ar@[red]@(ul,ur) \ar@[red][l]|-{\scriptscriptstyle (-d)} & \cdots \ar@[red][l]|-{\scriptscriptstyle (-d)} & \bullet \ar@[red]@(ul,ur) \ar@[red][l]|-{\scriptscriptstyle (-d)} & \bullet \ar@[red]@(ul,ur) \ar@[red][l]|-{\scriptscriptstyle (-d)} \\
 \bullet \ar@[red]@(ul,ur) \ar@[red]@{.>}[r] & \bullet \ar@[red]@(ul,ur) \ar@[red]@{.>}[r] & \cdots \ar@[red]@{.>}[r] & \bullet \ar@[red]@(ul,ur) \ar@[red]@{.>}[r] & \bullet \ar@[red]@(ul,ur) \ar@/_5pc/@[red][uuuu]|-{-a_1} \ar@[red]@{.>}[d] \\
 \bullet \ar@[red]@(ul,ur) \ar@[red]@{.>}[d] & \bullet \ar@[red]@(ul,ur) \ar@[red]@{.>}[d] \ar@[red]@{.>}[l] & \cdots \ar@[red]@{.>}[l] & \bullet \ar@[red]@(ul,ur) \ar@[red]@{.>}[d] \ar@[red]@{.>}[l] & \bullet \ar@[red]@(ul,ur) \ar@[red]@{.>}[d] \ar@[red]@{.>}[l] \\
 \bullet \ar@[red]@(ul,ur) \ar@[red]@{.>}[d] & \bullet \ar@[red]@(ul,ur) \ar@[red]@{.>}[l] & \cdots \ar@[red]@{.>}[l] & \bullet \ar@[red]@(ul,ur) \ar@[red]@{.>}[l] & \bullet \ar@[red]@(ul,ur) \ar@[red]@{.>}[l] \\
 \bullet \ar@[red]@(ul,ur) \ar@[red]@{.>}[r] & \bullet \ar@[red]@(ul,ur) \ar@[red]@{.>}[r] & \cdots \ar@[red]@{.>}[r] & \bullet \ar@[red]@(ul,ur) \ar@[red]@{.>}[r] & \bullet \ar@[red]@(ul,ur) \ar@/_4pc/@[red][uuuuuuu]|-(0.45){-a_2} \ar@[red]@{.>}[d] \\
\vdots & \vdots & \ddots & \vdots & \vdots \ar@[red]@{.>}[d] \\
 \bullet \ar@[red]@(ul,ur) \ar@[red]@{.>}[d] & \bullet \ar@[red]@(ul,ur) \ar@[red]@{.>}[d] \ar@[red]@{.>}[l] & \cdots \ar@[red]@{.>}[l] & \bullet \ar@[red]@(ul,ur) \ar@[red]@{.>}[d] \ar@[red]@{.>}[l] & \bullet \ar@[red]@(ul,ur) \ar@[red]@{.>}[d] \ar@[red]@{.>}[l] \\
 \bullet \ar@[red]@(ul,ur) \ar@[red]@{.>}[d] & \bullet \ar@[red]@(ul,ur) \ar@[red]@{.>}[l] & \cdots \ar@[red]@{.>}[l] & \bullet \ar@[red]@(ul,ur) \ar@[red]@{.>}[l] & \bullet \ar@[red]@(ul,ur) \ar@[red]@{.>}[l] \\
 \bullet \ar@[red]@(ul,ur) \ar@[red]@{.>}[r] & \bullet \ar@[red]@(ul,ur) \ar@[red]@{.>}[r] & \cdots \ar@[red]@{.>}[r] & \bullet \ar@[red]@(ul,ur) \ar@[red]@{.>}[r] & \bullet \ar@[red]@(ul,ur) \ar@/_3pc/@[red][uuuuuuuuuuu]|-(0.4){-a_n} \\
}
\end{equation*}
We get $\text{dim}_{\Q}(\ker \, A_{C_3}) = 1$. We only have one connected component of this type.
\item Finally $C_4$, given by the graphs
\begin{equation*}
\xymatrix @R=0.4cm @C=0.9cm {
	& & & & \bullet & & & & & & \\
	\bullet \ar@[red]@(ul,ur) \ar@[red]@{.>}[r] & \bullet \ar@[red]@(ul,ur) \ar@[red]@{.>}[r] & \cdots \ar@[red]@{.>}[r] & \bullet \ar@[red]@(ul,ur) \ar@[red]@{.>}[r] & \bullet \ar@[red]@(ul,ur) \ar@[red][u]|-{\scriptscriptstyle -a_0} & & & & & & \\
	\bullet \ar@[red]@(ul,ur) \ar@[red]@{.>}[d] \ar@[red][u]|-{\scriptscriptstyle -d} & \bullet \ar@[red]@(ul,ur) \ar@[red]@{.>}[d] \ar@[red][l]|-{\scriptscriptstyle (-d)} & \cdots \ar@[red][l]|-{\scriptscriptstyle (-d)} & \bullet \ar@[red]@(ul,ur) \ar@[red]@{.>}[d] \ar@[red][l]|-{\scriptscriptstyle (-d)} & \bullet \ar@[red]@{.>}[d] \ar@[red][l]|-{\scriptscriptstyle (-d)} \ar@/^1pc/@[red]@{.>}[dddddddddddrr] & & & & & & \\
 \bullet \ar@[red]@(ul,ur) \ar@[red][d]|-{\scriptscriptstyle -d} & \bullet \ar@[red]@(ul,ur) \ar@[red][l]|-{\scriptscriptstyle (-d)} & \cdots \ar@[red][l]|-{\scriptscriptstyle (-d)} & \bullet \ar@[red]@(ul,ur) \ar@[red][l]|-{\scriptscriptstyle (-d)} & \bullet \ar@[red]@(ul,ur) \ar@[red][l]|-{\scriptscriptstyle (-d)}  & & & & & & \\
 \bullet \ar@[red]@(ul,ur) \ar@[red]@{.>}[r] & \bullet \ar@[red]@(ul,ur) \ar@[red]@{.>}[r] & \cdots \ar@[red]@{.>}[r] & \bullet \ar@[red]@(ul,ur) \ar@[red]@{.>}[r] & \bullet \ar@[red]@(ul,ur) \ar@/_5pc/@[red][uuuu]|-{-a_1} \ar@[red]@{.>}[d] & & & & & & \\
 \bullet \ar@[red]@(ul,ur) \ar@[red]@{.>}[d] & \bullet \ar@[red]@(ul,ur) \ar@[red]@{.>}[d] \ar@[red]@{.>}[l] & \cdots \ar@[red]@{.>}[l] & \bullet \ar@[red]@(ul,ur) \ar@[red]@{.>}[d] \ar@[red]@{.>}[l] & \bullet \ar@[red]@(ul,ur) \ar@[red]@{.>}[d] \ar@[red]@{.>}[l] & & & & & & \\
 \bullet \ar@[red]@(ul,ur) \ar@[red]@{.>}[d] & \bullet \ar@[red]@(ul,ur) \ar@[red]@{.>}[l] & \cdots \ar@[red]@{.>}[l] & \bullet \ar@[red]@(ul,ur) \ar@[red]@{.>}[l] & \bullet \ar@[red]@(ul,ur) \ar@[red]@{.>}[l] & & & & & & \\
 \bullet \ar@[red]@(ul,ur) \ar@[red]@{.>}[r] & \bullet \ar@[red]@(ul,ur) \ar@[red]@{.>}[r] & \cdots \ar@[red]@{.>}[r] & \bullet \ar@[red]@(ul,ur) \ar@[red]@{.>}[r] & \bullet \ar@[red]@(ul,ur) \ar@/_4pc/@[red][uuuuuuu]|-{-a_2} \ar@[red]@{.>}[d] & & & & & & \\
\vdots & \vdots & \ddots & \vdots & \vdots \ar@[red]@{.>}[d] & & & & & & \\
 \bullet \ar@[red]@(ul,ur) \ar@[red]@{.>}[d] & \bullet \ar@[red]@(ul,ur) \ar@[red]@{.>}[d] \ar@[red]@{.>}[l] & \cdots \ar@[red]@{.>}[l] & \bullet \ar@[red]@(ul,ur) \ar@[red]@{.>}[d] \ar@[red]@{.>}[l] & \bullet \ar@[red]@(ul,ur) \ar@[red]@{.>}[d] \ar@[red]@{.>}[l] & & & & & & \\
 \bullet \ar@[red]@(ul,ur) \ar@[red]@{.>}[d] & \bullet \ar@[red]@(ul,ur) \ar@[red]@{.>}[l] & \cdots \ar@[red]@{.>}[l] & \bullet \ar@[red]@(ul,ur) \ar@[red]@{.>}[l] & \bullet \ar@[red]@(ul,ur) \ar@[red]@{.>}[l] & & & & & & \\
 \bullet \ar@[red]@(ul,ur) \ar@[red]@{.>}[r] & \bullet \ar@[red]@(ul,ur) \ar@[red]@{.>}[r] & \cdots \ar@[red]@{.>}[r] & \bullet \ar@[red]@(ul,ur) \ar@[red]@{.>}[r] & \bullet \ar@[red]@(ul,ur) \ar@/_3pc/@[red][uuuuuuuuuuu]|-(0.2){-a_n} & & & & & & \\
 & & & & & \bullet \ar@[red]@{.>}[ddr] \ar@/_1pc/@[red][uuuuuuuuuuuul] & & & & & \\
 & & & & & & \bullet \ar@[red]@(ul,ur) \ar@[red][d] \ar@[red]@{.>}[r] & \bullet \ar@[red]@(ul,ur) \ar@[red][d] \ar@[red]@{.>}[r] & \cdots \ar@[red]@{.>}[r] & \bullet \ar@[red]@(ul,ur) \ar@[red][d] \ar@[red]@{.>}[r] & \bullet \ar@[red]@(ul,ur) \ar@[red][d] \\
 & & & & & & \bullet \ar@[red]@(ul,ur) \ar@[red]@{.>}[r] & \bullet \ar@[red]@(ul,ur) \ar@[red]@{.>}[r] & \cdots \ar@[red]@{.>}[r] & \bullet \ar@[red]@(ul,ur) \ar@[red]@{.>}[r] & \bullet \ar@[red]@(ul,ur) \ar@[red]@{.>}[d] \\
 & & & & & & & & & & \bullet \\
}
\end{equation*}
Here
\[ \text{dim}_{\Q}(\ker \, A_{C_4}) = \left\{ \begin{array}{lr}
																				3 & \text{ if $k_{i+1} = p(k_i)d^{k_i}$} \\
																				2 & \text{ otherwise}
																			 \end{array}\right\} = 2 + \delta_{k_{i+1},p(k_i)d^{k_i}}. \]
We have $r-1$ connected components of this kind.
\end{enumerate}
In this case, we compute
\begin{align*}
b^{(2)}(A) & = \mu([1\ul{1}1]) \text{dim}_{\Q}(\ker \, \pi(A)_{[1\ul{1}1]}) + \sum_{r \geq 1} \sum_{k_1,...,k_r \geq 1} \mu([1\ul{1}0^{k_1}10^{k_2} \cdots 10^{k_r}11]) \Big( \sum_{i=1}^4 \dim_{\Q}(\ker \, A_{C_i}) \Big) \\
& = \frac{3n+6}{2^3} + \sum_{r \geq 1} \sum_{k_1,...,k_r \geq 1} \frac{1}{2^{k_1+ \cdots +k_r + (r+3)}} \Big[ (3n+7) + (3n+5)r + 3(k_1 + \cdots + k_r) \Big] \\
& \qquad \qquad \quad + \sum_{r \geq 1} \sum_{k_1,...,k_r \geq 1} \frac{1}{2^{k_1+ \cdots +k_r + (r+3)}} \Big[ \sum_{i=1}^{r-1} \delta_{k_{i+1},p(k_i)d^{k_i}} \Big] \\
& = \frac{3n+6}{8} + \frac{1}{8} \sum_{r \geq 1} \frac{(3n+7)+(3n+11)r}{2^r} + \frac{1}{8} \Big( \sum_{r \geq 1} \frac{r-1}{2^r} \Big) \Big( \sum_{k \geq 1} \frac{1}{2^{k + p(k)d^k}} \Big) \\
& = \frac{12n+35}{8} + \frac{1}{8} \sum_{k \geq 1} \frac{1}{2^{k+p(k)d^k}}
\end{align*}
which is again an irrational number, and even transcendental (see e.g. \cite{Tan}). Just as before, the rational number accompanying $\alpha = \sum_{k \geq 1} \frac{1}{2^{k+p(k)d^k}}$ is of the form $\frac{1}{2^m}$.

After these examples one can derive the pattern in order to obtain an exponent of the form $p_0(k) + p_1(k)d_1^k + \cdots + p_n(k) d_n^k$ by simply adding more levels, i.e. by considering matrices of higher dimension, and gluing the corresponding graphs in an appropriate way. We write down the corresponding element that gives rise to such a pattern. If we let $N = m_0 + \cdots + m_n$ to be the sum of the degrees of the polynomials $p_0,...,p_n$ respectively, with $p_i(x)=\sum_{j=0}^{m_i} a_{j,i}x^j$ for $0 \leq i \leq n$, then the element realizing the preceding pattern belongs to $M_{3N+n+5}(\calA_{1/2})$, and is given explicitly by

\hfill\tikzmark{right}
\begin{equation*}
\begin{split}
A = & \tikzmark{1} - \chi_{[\underline{0} 0]}t^{-1} \cdot e_{1,1} - \chi_{[\underline{0}]} \cdot e_{2,1} - \chi_{[\underline{0} 0]}t^{-1} \cdot e_{2,2} \\  
& - \chi_{[1\underline{0}]} \cdot e_{3,2} - \chi_{[0 \underline{0}]} t \cdot e_{3,3} - \chi_{[\underline{0}1]} \cdot e_{4,3} - (a_{1,0} - 1) \chi_{[\underline{0}1]} \cdot e_{0,3} \\
& + \sum_{j=1}^{m_0-2} \Big( - \chi_{[\underline{0} 0]} t^{-1} \cdot e_{3j+1,3j+1} - \chi_{[\underline{0}]} \cdot e_{3j+2,3j+1} - \chi_{[\underline{0} 0]} t^{-1} \cdot e_{3j+2,3j+2} \\
& \qquad \qquad - \chi_{[1\underline{0}]} \cdot e_{3j+3,3j+2} - \chi_{[0 \underline{0}]} t \cdot e_{3j+3,3j+3} \\
& \qquad \qquad - \chi_{[\underline{0}1]} \cdot e_{3j+4,3j+3} - a_{j+1,0} \chi_{[\underline{0}1]} \cdot e_{0,3j+3} \Big) \\
& - \chi_{[\underline{0} 0]} t^{-1} \cdot e_{3m_0-2,3m_0-2} - \chi_{[\underline{0}]} \cdot e_{3m_0-1,3m_0-2} - \chi_{[\underline{0} 0]} t^{-1} \cdot e_{3m_0-1,3m_0-1} \\
& \tikzmark{2} - \chi_{[1\underline{0}]} \cdot e_{3m_0,3m_0-1} - \chi_{[0 \underline{0}]} t \cdot e_{3m_0,3m_0} - a_{m_0,0} \chi_{[\underline{0}1]} \cdot e_{0,3m_0} \\
& \quad \\
& \qquad \qquad \qquad \qquad \qquad - \chi_{[\underline{0}1]} \cdot e_{3m_0+2,1} \\
& \quad \\
& \tikzmark{3} - d_1 \chi_{[1\underline{0}]} \cdot e_{3m_0+1,3m_0+2} - \chi_{[0 \underline{0}]} t \cdot e_{3m_0+1,3m_0+1} - a_{0,1} \chi_{[\underline{0}1]} \cdot e_{0,3m_0+1} \\
& - d_1 \chi_{[\underline{0} 0]} t^{-1} \cdot e_{3m_0+2,3m_0+2} - \chi_{[\underline{0}]} \cdot e_{3m_0+3,3m_0+2} \\
& - d_1 \chi_{[\underline{0} 0]} t^{-1} \cdot e_{3m_0+3,3m_0+3} - d_1 \chi_{[1\underline{0}]} \cdot e_{3m_0+4,3m_0+3} \\
& - \chi_{[0 \underline{0}]} t \cdot e_{3m_0+4,3m_0+4} - \chi_{[\underline{0}1]} \cdot e_{3m_0+5,3m_0+4} - a_{1,1} \chi_{[\underline{0}1]} \cdot e_{0,3m_0+4} \\
& + \sum_{j=1}^{m_1-2} \Big( - \chi_{[\underline{0} 0]} t^{-1} \cdot e_{3m_0+3j+2,3m_0+3j+2} - \chi_{[\underline{0}]} \cdot e_{3m_0+3j+3,3m_0+3j+2} \\
& \qquad \qquad - \chi_{[\underline{0} 0]} t^{-1} \cdot e_{3m_0+3j+3,3m_0+3j+3} - \chi_{[1\underline{0}]} \cdot e_{3m_0+3j+4,3m_0+3j+3} \\
& \qquad \qquad - \chi_{[0 \underline{0}]} t \cdot e_{3m_0+3j+4,3m_0+3j+4} - \chi_{[\underline{0}1]} \cdot e_{3m_0+3j+5,3m_0+3j+4} \\
& \qquad \qquad - a_{j+1,1} \chi_{[\underline{0}1]} \cdot e_{0,3m_0+3j+4} \Big) \\
& - \chi_{[\underline{0} 0]} t^{-1} \cdot e_{3(m_0+m_1)-1,3(m_0+m_1)-1} - \chi_{[\underline{0}]} \cdot e_{3(m_0+m_1),3(m_0+m_1)-1} \\
& - \chi_{[\underline{0} 0]} t^{-1} \cdot e_{3(m_0+m_1),3(m_0+m_1)} - \chi_{[1\underline{0}]} \cdot e_{3(m_0+m_1)+1,3(m_0+m_1)} \\
& \tikzmark{4} - \chi_{[0 \underline{0}]} t \cdot e_{3(m_0+m_1)+1,3(m_0+m_1)+1} - a_{m_1,1} \chi_{[\underline{0}1]} \cdot e_{0,3(m_0+m_1)+1} \\
& \quad \\
& \qquad \qquad \qquad \qquad \qquad - \chi_{[\underline{0}1]} \cdot e_{3(m_0+m_1)+3,1} \\
& \qquad \qquad \qquad \qquad \qquad \qquad \qquad \qquad \vdots \\
& \qquad \qquad \qquad \qquad \qquad - \chi_{[\underline{0}1]} \cdot e_{3(N-m_n)+n+1,1} \\
& \quad \\
& \tikzmark{5} - d_n \chi_{[1\underline{0}]} \cdot e_{3(N-m_n)+n,3(N-m_n)+n+1} - \chi_{[0 \underline{0}]} t \cdot e_{3(N-m_n)+n,3(N-m_n)+n} \\
& - a_{0,n} \chi_{[\underline{0}1]} \cdot e_{0,3(N-m_n)+n} - d_n \chi_{[\underline{0} 0]} t^{-1} \cdot e_{3(N-m_n)+n+1,3(N-m_n)+n+1} \\
& - \chi_{[\underline{0}]} \cdot e_{3(N-m_n)+n+2,3(N-m_n)+n+1} - d_n \chi_{[\underline{0} 0]} t^{-1} \cdot e_{3(N-m_n)+n+2,3(N-m_n)+n+2} \\
& \tikzmark{6} - d_n \chi_{[1\underline{0}]} \cdot e_{3(N-m_n)+n+3,3(N-m_n)+n+2} - \chi_{[0 \underline{0}]} t \cdot e_{3(N-m_n)+n+3,3(N-m_n)+n+3} \\
\end{split}
\end{equation*}

\begin{tikzpicture}[overlay, remember picture]
\node[anchor=base] (a) at (pic cs:1) {\vphantom{h}};
\node[anchor=base] (b) at (pic cs:2) {\vphantom{g}};
\draw [decoration={brace,amplitude=0.5em},decorate,ultra thick,black]
 (a.north -| {pic cs:right}) -- (b.south -| {pic cs:right}) node[midway,below,rotate=90] {$1^{\text{st}}$ polynomial};
\end{tikzpicture}
\begin{tikzpicture}[overlay, remember picture]
\node[anchor=base] (a) at (pic cs:3) {\vphantom{h}};
\node[anchor=base] (b) at (pic cs:4) {\vphantom{g}};
\draw [decoration={brace,amplitude=0.5em},decorate,ultra thick,black]
 (a.north -| {pic cs:right}) -- (b.south -| {pic cs:right}) node[midway,below,rotate=90] {$2^{\text{nd}}$ polynomial};
\end{tikzpicture}
\begin{tikzpicture}[overlay, remember picture]
\node[anchor=base] (a) at (pic cs:5) {\vphantom{h}};
\node[anchor=base] (b) at (pic cs:6) {\vphantom{g}};
\draw [decoration={brace,amplitude=0.5em},decorate,ultra thick,black]
 (a.north -| {pic cs:right}) -- (b.south -| {pic cs:right}) node[midway,below,rotate=90] {$n^{\text{th}}$ polynomial};
\end{tikzpicture}

\begin{equation*}
\begin{split}
\text{ } & \tikzmark{7} - \chi_{[\underline{0}1]} \cdot e_{3(N-m_n)+n+4,3(N-m_n)+n+3} - a_{1,n} \chi_{[\underline{0}1]} \cdot e_{0,3(N-m_n)+n+3} \\
& + \sum_{j=1}^{m_n-2} \Big( - \chi_{[\underline{0} 0]} t^{-1} \cdot e_{3(N-m_n)+3j+n+1,3(N-m_n)+3j+n+1} \\
& \qquad \qquad - \chi_{[\underline{0}]} \cdot e_{3(N-m_n)+3j+n+2,3(N-m_n)+3j+n+1} \\
& \qquad \qquad - \chi_{[\underline{0} 0]} t^{-1} \cdot e_{3(N-m_n)+3j+n+2,3(N-m_n)+3j+n+2} \\
& \qquad \qquad - \chi_{[1\underline{0}]} \cdot e_{3(N-m_n)+3j+n+3,3(N-m_n)+3j+n+2} \\
& \qquad \qquad - \chi_{[0 \underline{0}]} t \cdot e_{3(N-m_n)+3j+n+3,3(N-m_n)+3j+n+3} \\
& \qquad \qquad - \chi_{[\underline{0}1]} \cdot e_{3(N-m_n)+3j+n+4,3(N-m_n)+3j+n+3} \\
& \qquad \qquad - a_{j+1,n} \chi_{[\underline{0}1]} \cdot e_{0,3(N-m_n)+3j+n+3} \Big) \\
& - \chi_{[\underline{0} 0]} t^{-1} \cdot e_{3N+n-2,3N+n-2} - \chi_{[\underline{0}]} \cdot e_{3N+n-1,3N+n-2} \\
& - \chi_{[\underline{0} 0]} t^{-1} \cdot e_{3N+n-1,3N+n-1} - \chi_{[1\underline{0}]} \cdot e_{3N+n,3N+n-1} \\
& \tikzmark{8} - \chi_{[0 \underline{0}]} t \cdot e_{3N+n,3N+n} - a_{m_n,n} \chi_{[\underline{0}1]} \cdot e_{0,3N+n} \\
& \qquad \qquad \quad - \chi_{[01\underline{0}]} t^2 \cdot e_{3N+n+2,1} \\
& \qquad \qquad \quad + \chi_{[\underline{0}10]} t^{-1} \cdot e_{0,3N+n+1} \\
& \qquad \qquad \quad - \chi_{[01\underline{0}]} t \cdot e_{3N+n+3,3N+n+1} \\
& \tikzmark{9} - \chi_{[0 \underline{0}]} t \cdot e_{3N+n+2,3N+n+2} + \chi_{[\underline{0}]} \cdot e_{3N+n+3,3N+n+2} \\
& \tikzmark{10} - \chi_{[0 \underline{0}]} t \cdot e_{3N+n+3,3N+n+3} - \chi_{[\underline{0}1]} \cdot e_{3N+n+4,3N+n+3} \\
& + \chi_{[\underline{0}]} \cdot \left( \text{Id}_{3N+n+5} - e_{0,0} - e_{3N+n+1,3N+n+1} - e_{3N+n+4,3N+n+4} \right) - \chi_{[\underline{0}1]} \cdot e_{1,1} \in M_{3N+n+5}(\Q \Gamma).
\end{split}
\end{equation*}

\begin{tikzpicture}[overlay, remember picture]
\node[anchor=base] (a) at (pic cs:7) {\vphantom{h}};
\node[anchor=base] (b) at (pic cs:8) {\vphantom{g}};
\draw [decoration={brace,amplitude=0.5em},decorate,ultra thick,black]
 (a.north -| {pic cs:right}) -- (b.south -| {pic cs:right}) node[midway,below,rotate=90] {$n^{\text{th}}$ polynomial};
\end{tikzpicture}
\begin{tikzpicture}[overlay, remember picture]
\node[anchor=base] (a) at (pic cs:9) {\vphantom{h}};
\node[anchor=base] (b) at (pic cs:10) {\vphantom{g}};
\draw [decoration={brace,amplitude=0.5em},decorate,ultra thick,black]
 (a.north -| {pic cs:right}) -- (b.south -| {pic cs:right}) node[midway,below,rotate=90] {last graph};
\end{tikzpicture}

\noindent The elements in between connect the different polynomials $p_i$, and the contributions (monomials) of the polynomials (that is, the sum $p_0(k) + p_1(k)d_1^k + \cdots + p_n(k)d_n^k$) are accumulated in the $\chi_{[\underline{0}1]} \cdot e_{0,0}$ component. A simplified schematic of a prototypical graph appearing here is as follows.

\begin{equation*}
\xymatrix@C=0.8cm@R=0.4cm{
 & & & & \bullet & & & & & & \\
 \bullet \ar@{-}@[red][rr] & & \cdots \ar@{-}@[red][rr] & & \bullet \ar@{-}@[red][d] \ar@/^3pc/@[red]@{.>}[ddddddddrr] & & & & & & \\
 \vdots \ar@{-}@[red][u] \ar@{-}@[red][d] & & \text{$1^{\text{st}}$ polynomial} & & \vdots \ar@{-}@[red][d] \ar@/_1pc/@[red][uu] & & & & & & \\
 \bullet \ar@{-}@[red][rr] & & \cdots \ar@{-}@[red][rr] & & \bullet \ar@/_2pc/@[red][uuu] & & & & & & \\
\vdots & \vdots & \ddots & \vdots & \vdots & & & & & & \\
 \bullet \ar@{-}@[red][d] \ar@{-}@[red][rr] & & \cdots \ar@{-}@[red][rr] & & \bullet \ar@{-}@[red][d] & & & & & & \\
 \vdots \ar@{-}@[red][d] & & \text{$(n+1)^{\text{th}}$ polynomial} & & \vdots \ar@{-}@[red][d] \ar@/_1pc/@[red][uuuuuu] & & & & & & \\
 \bullet \ar@{-}@[red][rr] & & \cdots \ar@{-}@[red][rr] & & \bullet \ar@/_2pc/@[red][uuuuuuu] & & & & & & \\
 & & & & & \bullet \ar@[red]@{.>}[ddr] \ar@/_2pc/@[red][uuuuuuuul] & & & & & \\
 & & & & & & \bullet \ar@[red]@(ul,ur) \ar@[red][d] \ar@[red]@{.>}[r] & \bullet \ar@[red]@(ul,ur) \ar@[red][d] \ar@[red]@{.>}[r] & \cdots \ar@[red]@{.>}[r] & \bullet \ar@[red]@(ul,ur) \ar@[red][d] \ar@[red]@{.>}[r] & \bullet \ar@[red]@(ul,ur) \ar@[red][d] \\
 & & & & & & \bullet \ar@[red]@(ul,ur) \ar@[red]@{.>}[r] & \bullet \ar@[red]@(ul,ur) \ar@[red]@{.>}[r] & \cdots \ar@[red]@{.>}[r] & \bullet \ar@[red]@(ul,ur) \ar@[red]@{.>}[r] & \bullet \ar@[red]@(ul,ur) \ar@[red]@{.>}[d] \\
 & & & & & & & & & & \bullet \\
}
\end{equation*}
Using the same procedure as before, a straightforward (but quite tedious) computation allows us to conclude the proof. We only write down the contribution corresponding to the components $C_4$ of the graph $E_A(W)$ where $W$ is of the second type, which are the ones that contribute to the irrationality of the $\ell^2$-Betti number. We get
\begin{align*}
\text{dim}_{\Q}(\ker \, A_{C_4}) 
																			 = 2 + \delta_{k_{i+1},p_0(k_i) - k_i - a_{0,0} + p_1(k_i)d_1^{k_i} + \cdots + p_n(k_i)d_n^{k_i}}.
																			\end{align*}
We have $r-1$ connected components of this kind. Thus the contribution to $b^{(2)}(A)$ coming from these graphs is

\begin{align*}
& \sum_{r \geq 1} \sum_{k_1,...,k_r \geq 1} \frac{1}{2^{k_1 + \cdots + k_r + (r+3)}}\Big[ 2(r-1) + \sum_{i=1}^{r-1} \delta_{k_{i+1},p_0(k_i) - k_i - a_{0,0} + p_1(k_i)d_1^{k_i} + \cdots + p_n(k_i)d_n^{k_i}} \Big] \\
& \qquad \quad = \frac{2}{8} \sum_{r \geq 1}\frac{r-1}{2^r} + \frac{2^{a_{0,0}}}{8}\Big( \sum_{r \geq 1} \frac{r-1}{2^r} \Big) \Big( \sum_{k \geq 1} \frac{1}{2^{p_0(k) + p_1(k)d_1^{k} + \cdots + p_n(k)d_n^{k}}} \Big) \\
& \qquad \quad = \frac{1}{4} + \frac{2^{a_{0,0}}}{8} \sum_{k \geq 1} \frac{1}{2^{p_0(k) + p_1(k)d_1^{k} + \cdots + p_n(k)d_n^{k}}}.
\end{align*}
We leave the rest of the details to the reader. Note, again, that in the special case $a_{0,0} = 0$ the irrational number $\alpha = \sum_{k \geq 1} \frac{1}{2^{p_0(k) + p_1(k)d_1^{k} + \cdots + p_n(k)d_n^{k}}}$ is accompanied by a rational number of the form $\frac{1}{2^m}$.\\

For the second part of the theorem, note that under the assumption $a_{0,0} = 0$ we have always obtained irrational numbers of the form $q_0 + q_1 \alpha$, being $\alpha$ the irrational number
$$\alpha = \sum_{k \geq 1} \frac{1}{2^{p_0(k) + p_1(k)d_1^{k} + \cdots + p_n(k)d_n^{k}}},$$
and $q_0,q_1$ non-zero rational numbers with $q_1$ of the form $\frac{1}{2^m}$ for some $m \geq 1$. In particular, $q_0 + q_1 \alpha$ belongs to $\calG(\Gamma,\Q)$. By a result of Ara and Goodearl \cite[Corollary 6.14]{AG}, the group $\calG(\Gamma,\Q)$ contains the rational numbers, and hence the element $q_1 \alpha$ belongs to $\calG(\Gamma,\Q)$ too. Since $q_1 = \frac{1}{2^m}$, we get
$$\alpha = q_1 \alpha + \stackrel{2^m}{\cdots} + q_1 \alpha \in \calG(\Gamma,\Q).$$
This completes the proof of the theorem.
\end{proof}

\begin{remark}\label{remark-exotic.patterns}
By making use of the ideas and techniques developed in Theorem \ref{theorem-irrational.dimensions}, one can construct elements whose associated $\ell^2$-Betti number has a binary expansion following different kinds of exotic patterns. For example, by gluing in an appropriate way two graphs corresponding to different polynomials $p_1(x)$ and $p_2(x)$, that is, by constructing graphs of the form $C_3$ but substituting the bottom right graph (the one contributing to $k_{i+1}$) by another graph corresponding to the polynomial $p_2(x)$, we can obtain terms in the computation of the associated $\ell^2$-Betti number of the form
$$\sum_{k_1,k_2 \geq 1} \frac{\delta_{p_1(k_1),p_2(k_2)}}{2^{k_1+k_2}},$$
and more generally of the form
$$\sum_{k_1,k_2 \geq 1} \frac{\delta_{p_0(k_1) + p_1(k_1)d_1^{k_1} + \cdots + p_n(k_1) d_n^{k_1},q_0(k_2) + q_1(k_2)l_1^{k_2} + \cdots + q_m(k_2) l_m^{k_2}}}{2^{k_1+k_2}},$$
being $p_i(x),q_j(x)$ polynomials satisfying the hypotheses of the theorem, and $d_i,l_j \geq 2$ integers.
\end{remark}

To conclude this section, it may be instructive to compute some rational values of $\ell^2$-Betti numbers. In \cite{GZ} (cf. \cite{DiSc,AG,Gra14}) the authors compute the $\ell^2$-Betti number of the element $a_0 = e_0t + t^{-1} e_0 = \chi_{X \backslash E_0} t + t^{-1} \chi_{X \backslash E_0}$, which belongs to the first of our approximating algebras, $\calA_0$. We will compute, in general, the $\ell^2$-Betti number of the element $a_n = \chi_{X \backslash E_n} t + t^{-1} \chi_{X \backslash E_n} \in \Q \Gamma$ belonging to the $*$-subalgebra $\calA_n$. Under $\pi_n$ --and recalling that $\gotR_n = \Q \times \prod_{k \geq 1} M_{m+k}(\Q)^{\text{Fib}_m(k)}$ with $m = 2n+1$-- it gives
$$\pi_n(a_n) = ( 0, ( t_{m+k} + t_{m+k}^* , \stackrel{\text{Fib}_m(k)}{...}, t_{m+k} + t_{m+k}^*)_{k \geq 1}),$$
where $t_r$ is the $r \times r$ lower-triangular matrix given by
\[ t_r = \left( \begin{array}{@{}c@{}}
		\begin{matrix}
			0 & & & & \bigzero \\
			1 & 0 & & & \\
			& \ddots & \ddots & & \\
			& & \ddots & 0 &  \\
			\bigzero & & & 1 & 0
		\end{matrix}											
	 \end{array} \right). \]
It is then straightforward to show that
$$\text{dim}_{\Q}(\ker \, (t_{m+k} + t_{m+k}^*)) = \begin{cases}
																						1 & \text{ if $k$ is even} \\
																						0 & \text{ otherwise}
																					\end{cases}$$
so
\begin{align*}
b^{(2)}(a_n) = \frac{1}{2^{m+1}} + \sum_{k \geq 1} \frac{\text{Fib}_m(2k)}{2^{2m+2k}}.
\end{align*}
This sum can be computed by using the summation rule
$$\sum_{k \geq 1} \frac{\text{Fib}_m(2k)}{2^{2k}} = 2^{m-1} \frac{2^{m+1}-1}{2^{m+2}+1},$$
whose proof can be found in \cite[Lemma 3.2.11]{Claramunt}. We then have
$$b^{(2)}(a_n) = \frac{3}{1+2^{2n+3}}.$$
Note that $b^{(2)}(a_0) = \frac{1}{3}$, and we recover the result from \cite{GZ,DiSc}. Also, as $n \ra \infty$, this value tends to zero, as expected since $a_n \ra t + t^{-1}$ in rank, which is invertible inside $\gotR_{\rk}$.

\subsection{Rational series and \texorpdfstring{$\ell^2$}{}-Betti numbers}\label{subsection-rationalseries}

In this section we let again $K \subseteq \C$ be any field closed under complex conjugation. A more algebraic perspective to attack the problem of computing values from $\calG(\calA_{1/2}) \subseteq \calG(\Gamma,K)$ is through the determination of the $*$-regular closure $\calR_{1/2}$ of the algebra $\calA_{1/2}$ seen inside $\gotR_{1/2} = K \times \prod_{k \geq 1} M_{k+2}(K)^{\text{Fib}_2(k)}$ through the embedding $\pi_{1/2} : \calA_{1/2} \hookrightarrow \gotR_{1/2}$, and taking advantage of \cite[Proposition 4.1]{AC2} which states that the values of ranks of elements from $\calR_{1/2}$ are all included in $\calG(\Gamma,K)$.

In order to clarify the exact relationship of this approach with Atiyah's problem, it is convenient to introduce the following definitions for a general unital ring.
\begin{definition}\label{def:semigroupsCandCprime}
Let $R$ be a unital ring and let $\rk$ be a Sylvester matrix rank function on $R$. We denote by $\calC_{\rk} (R)$ the set of real numbers of the form $k- \rk(A)$ for $A\in M_k(R)$, $k\ge 1$. We denote by $\calC'_{\rk}(R)$ the set of real numbers of the form $\rk(A)$ for $A\in M_k(R)$, $k\ge 1$. Both $\calC_{\rk}(R)$ and $\calC_{\rk}'(R)$ are subsemigroups of $(\R^+,+)$. We denote by $\calG _{\rk}(R)$ the subgroup of $\R$ generated by $\calC_{\rk}(R)$. Clearly
$$\calG_{\rk}(R)= \calC _{\rk}(R) -  \calC _{\rk}(R) =  \calC _{\rk}'(R) - \calC_{\rk}'(R).$$
\end{definition}
\begin{remark}\label{rem:semigroupsCandCprime}
With this notation, \cite[Corollary 6.2]{Jaik} says that, if $\calU $ is a $*$-regular ring and $R$ is a $*$-subring of $\calU$ such that $\calU$ is the $*$-regular closure of $R$ in $\calU$, then we have
$$\calG_{\rk} (R) = \calG_{\rk}(\calU)$$
for every Sylvester matrix rank function $\rk$ on $\calU$. Thus the main object of study within this approach is the group $\calG_{\rk}(R)$. Of course $\calC (G,K) = \calC_{\rk}(KG)$ for any discrete group $G$, where $\rk$ is the canonical rank. It would be interesting to know conditions under which $\calC_{\rk}(R)= \calC_{\rk}'(R)$ and under which $\calC_{\rk}(R)= \calG_{\rk}(R)\cap \R^+$, or $\calC_{\rk}'(R)= \calG_{\rk}(R)\cap \R^+$. Note that
$$\forall g\in \calG_{\rk}(R)\, \exists t\in \Z^+ : g+t\in \calC_{\rk}(R) \iff 
\forall g\in \calG_{\rk}(R)\, \exists t\in \Z^+ : g+t\in \calC_{\rk}'(R).$$
In case $R$ is von Neumann regular, we have that $\calC_{\rk}(R) = \calC_{\rk}'(R)$ and the above equivalent conditions hold automatically, for every Sylvester matrix rank function $\rk$.
\end{remark}

We briefly remind some concepts from \cite{AC2}. Write $\calA_{1/2} = \bigoplus_{i\in \Z} \calA_{1/2,i}t^i$, where $\calA_{1/2,0}= \calA_{1/2}\cap C_K(X)$ and
$$\calA_{1/2,i} = \chi_{X \setminus (E_{1/2} \cup T(E_{1/2}) \cup \cdots \cup T^{i-1}(E_{1/2}))} \calA_{1/2,0} \quad \text{and} \quad \calA_{1/2,-i} = \chi_{X \setminus (T^{-1}(E_{1/2}) \cup \cdots \cup T^{-i}(E_{1/2}))} \calA_{1/2,0}$$
for $i > 0$. Following \cite[Subsection 4.2]{AC2}, we consider a skew partial power series ring $\calA_{1/2,0}[[t;T]]$ by taking infinite formal sums
$$\sum_{i \geq 0} b_i (\chi_{X \backslash E_{1/2}} t)^i = \sum_{i \geq 0} b_i t^i , \text{ where } b_i \in \calA_{1/2,i} \text{ for all } i \geq 0.$$
Specializing \cite[Definition 4.17]{AC2} to our situation, we see that a {\it special term} of positive degree $i$ is a monomial $b_it^i$ in $\calA_{1/2,i}$, where $b_i = \chi_S$ with
$$S = [10^{i_1}10^{i_2}1\cdots 10^{i_r}\ul{1}],\quad \text{ where } r\ge 1, i_1,\dots ,i_r\ge 1  \text{ and } i= i_1+\cdots +i_r +r.$$  
Moreover, since $E_{1/2}\cap T^{-1}(E_{1/2})\ne \emptyset$ and $S_0= T^{-1}(S_1)= [\ul{1}]$, we get from \cite[Definition 4.7(c)]{AC2} that the unique special term of degree $0$ is $\chi_{[\ul{1}]}$. 

Next, consider the subset $\calS_{1/2}[[t;T]]$ of $\calA_{1/2,0}[[t;T]]$ consisting of those elements
$$\sum_{i \geq 0} b_i (\chi_{X \backslash E_{1/2}}t)^i = \sum_{i \geq 0} b_i t^i$$
such that each $b_it^i \in \calA_{1/2,i}t^i$ belongs to the $K$-linear span of the special terms of degree $i$. We denote by $\calS_{1/2} [t;T]$ the subspace of $\calS_{1/2} [[t;T]]$ consisting of elements with finite support.

Let $\pazX=\{x_1,x_2,\dots  \}$ be an infinite countable set, and consider the algebras $K\langle \pazX \rangle $  and $K\langle \langle \pazX \rangle \rangle $ of non-commutative polynomials and non-commutative formal power series, respectively, with coefficients in $K$. We consider the degree in $K\langle \pazX \rangle $ given by $d(x_i) = i+1$ for all $i\ge 1$ and $d(1)=0$. With this degree function, $K\langle \pazX \rangle $ is a graded $K$-algebra, where $K$ has the trivial degree.

The following result is shown in \cite[Lemma 6.5 and Proposition 6.6]{AC2}. Note that \cite[Propositions 6.6 and 6.7]{AC2} are stated only for $n\ge 1$, but they hold, with the same proof, for $n=1/2$.
\begin{proposition}\label{prop:isospecial}
The linear subspaces $\calS_{1/2}[t;T]$ and $\calS_{1/2}[[t;T]]$ are indeed subalgebras of  $\calA_{1/2}[[t;T]]$. There is an isomorphism of unital graded algebras $K\langle \pazX \rangle \cong \calS_{1/2} [t; T]$ sending the element $x_{i_r}\cdots x_{i_1}$ to $\chi_{[10^{i_1}10^{i_2}1\cdots 10^{i_r}\ul{1}]}t^{i}$, where $i=i_1+\cdots + i_r+r$, and $1$ to $\chi_{[\ul{1}]}$. This isomorphism extends naturally to an isomorphism between $K\langle \langle \pazX \rangle \rangle $ and $\calS_{1/2}[[t;T]]$.
\end{proposition}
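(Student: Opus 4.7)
The plan is to reduce this proposition to \cite[Lemma 6.5 and Proposition 6.6]{AC2} by verifying that the abstract framework there specializes correctly to $n=1/2$; the driving computation is the multiplication of two special terms. Using the crossed product relation $\chi_A t^i \cdot \chi_B t^j = \chi_{A\cap T^i(B)}\, t^{i+j}$, a direct coordinate bookkeeping gives
\[
\chi_{[10^{i_1}\cdots 10^{i_r}\underline{1}]}\, t^i \;\cdot\; \chi_{[10^{j_1}\cdots 10^{j_s}\underline{1}]}\, t^j \;=\; \chi_{[10^{j_1}\cdots 10^{j_s}10^{i_1}\cdots 10^{i_r}\underline{1}]}\, t^{i+j},
\]
where $i = i_1+\cdots+i_r+r$ and $j = j_1+\cdots+j_s+s$. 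The key point is that $T^i(B)$ is the cylinder specifying positions $-i-j,\ldots,-i$, its rightmost coordinate (at position $-i$) is $1$, and this matches the leftmost coordinate $1$ of $A$; so the intersection has no conflicts and the two cylinders concatenate. The unit $\chi_{[\underline{1}]}$ (the unique special term of degree $0$, corresponding to the case $r=0$) acts as the identity because $[\underline{1}]$ already contains every special cylinder, and $T^i([\underline{1}])$ imposes the condition at position $-i$ that is already implied by a special term of degree $i$.

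From the product formula I would immediately deduce that $\calS_{1/2}[t;T]$ is a subalgebra of $\calA_{1/2,0}[[t;T]]$: the product of two special terms is again a special term, and the unit $\chi_{[\underline{1}]}$ lies in $\calS_{1/2}[t;T]$. For $\calS_{1/2}[[t;T]]$, closure under multiplication follows because the degree-$k$ component of a product of two elements is the finite sum $\sum_{i+j=k}$ of products of special terms of degrees $i$ and $j$, hence lies in the $K$-linear span of the degree-$k$ special terms.

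For the isomorphism, I would define $\Phi\colon K\langle \pazX\rangle \to \calS_{1/2}[t;T]$ by $\Phi(1)=\chi_{[\underline{1}]}$ and
\[
\Phi(x_{i_r}\cdots x_{i_1}) \;:=\; \chi_{[10^{i_1}10^{i_2}\cdots 10^{i_r}\underline{1}]}\, t^{i_1+\cdots+i_r+r},
\]
and extend $K$-linearly. Multiplicativity is precisely the concatenation formula above, since $(x_{i_r}\cdots x_{i_1})(x_{j_s}\cdots x_{j_1}) = x_{i_r}\cdots x_{i_1} x_{j_s}\cdots x_{j_1}$ matches the concatenated word in the cylinder set. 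For the grading, with $d(x_k)=k+1$ one has $d(x_{i_r}\cdots x_{i_1})=(i_1+1)+\cdots+(i_r+1)=i_1+\cdots+i_r+r$, which equals the $t$-degree of $\Phi(x_{i_r}\cdots x_{i_1})$. Bijectivity reduces to observing that for each $i$, the compositions $(i_1,\ldots,i_r)$ with $r\geq 1$, $i_j\geq 1$ and $i_1+\cdots+i_r+r=i$ parametrize both the degree-$i$ $K$-basis of $K\langle\pazX\rangle$ (by our degree convention) and the special terms of degree $i$.

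Finally, the extension to the completion follows because $\Phi$ preserves the grading and each element of $\calS_{1/2}[[t;T]]$ is uniquely an infinite sum with at most one summand of each degree; hence $\Phi$ extends coefficient-by-coefficient to an isomorphism $K\langle\langle\pazX\rangle\rangle \cong \calS_{1/2}[[t;T]]$, compatible with multiplication by the same finite-sum argument used above to establish that $\calS_{1/2}[[t;T]]$ is closed under products. The main obstacle is purely the coordinate bookkeeping in the product formula — once this is set up carefully, every other step is formal. The only reason this requires any adjustment from \cite[Propositions 6.6 and 6.7]{AC2} (stated there for $n\geq 1$) is that in the $n=1/2$ case the clopen set $E_{1/2}\cap T^{-1}(E_{1/2})=[1\underline{1}]$ is non-empty, which is precisely what forces $\chi_{[\underline{1}]}$ to appear as the unique special term of degree $0$; the remainder of the proof is unchanged.
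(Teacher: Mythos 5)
Your proposal is correct and follows essentially the same route that the paper delegates to \cite[Lemma~6.5 and Proposition~6.6]{AC2}: the entire content reduces to the concatenation formula for products of special terms, which you compute correctly (with the paper's convention $T(x)_k = x_{k+1}$ one has $t\chi_B t^{-1}=\chi_{T(B)}$, so $\chi_A t^i\cdot\chi_B t^j=\chi_{A\cap T^i(B)}t^{i+j}$, and $T^i(B)$ shifts the cylinder to occupy positions $-i-j,\ldots,-i$ with the overlapping $1$ at position $-i$ matching the leading $1$ of $A$). The verification that $\chi_{[\underline 1]}$ is the unit, the degree bookkeeping $d(x_{i_r}\cdots x_{i_1})=i_1+\cdots+i_r+r$, the bijection between words and special cylinders in each degree, and the passage to the completion are all sound. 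One small slip in the closing remark: $E_{1/2}\cap T^{-1}(E_{1/2}) = [1\underline{1}1]$, not $[1\underline{1}]$; moreover this set is likewise non-empty for every $n\ge 1$, so non-emptiness of $E\cap T^{-1}(E)$ is not what distinguishes the $n=1/2$ case from the $n\ge 1$ cases of \cite{AC2} --- but this does not affect the argument itself.
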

The special terms of the form $\chi_{[10^i\ul{1}]}t^{i+1}$, for $i\ge 1$, which are the algebra generators of $\calS_{1/2}[t;T]$, are called {\it pure special terms}, see the proof of \cite[Proposition 6.6]{AC2}.

Let $\pazX^*$ denote the free monoid on $\pazX$. We now recall the concept of the {\it Hadamard product} $\odot$ on $K\langle \langle \pazX \rangle \rangle $, see \cite{BR} for more details. 
If $a= \sum_{w\in \pazX^*}a_ww$ and $b= \sum_{w\in \pazX^*} b_ww$ are elements in $K\langle \langle \pazX \rangle \rangle $, set
$$a\odot b =  \Big(\sum_{w\in \pazX^*}a_ww\Big)\odot \Big(\sum_{w\in \pazX^*} b_ww\Big) = \sum_{w\in \pazX^*} (a_wb_w)w \in K\langle \langle \pazX \rangle \rangle .$$
We will denote by $K\langle \langle \pazX \rangle \rangle ^{\circ}$ the algebra of non-commutative formal power series endowed with the Hadamard product.
Note that $K\langle \langle \pazX \rangle \rangle ^{\circ}$ is a $*$-regular ring with respect to the involution given by $\ol{\sum_{w\in \pazX^*}a_ww}= \sum _{w\in \pazX^*} \ol{a_w}w$. 
The algebra of {\it non-commutative rational series}, denoted by $K_{\text{rat}}\langle \pazX \rangle $ is defined as the division closure of $K\langle \pazX \rangle $ in $K\langle \langle \pazX \rangle \rangle $, that is, $K_{\text{rat}}\langle \pazX \rangle $ is the smallest subalgebra of $K\langle \langle \pazX \rangle \rangle$ containing $K\langle \pazX \rangle $ and closed under inversion, see \cite{BR} for details. It is well-known that $K_{\text{rat}}\langle \pazX \rangle $ is closed under the Hadamard product \cite[Theorems 1.5.5 and 1.7.1]{BR}.

Let $p_{1/2} :=p_{E_{1/2}}= \pi_{1/2}(\chi_{E_{1/2}})$ be the projection in $\gotR_{1/2}= \prod_{W\in \mathbb V_{1/2}} M_{|W|}(K) = K\times \prod_{k \geq 1} M_{k+2}(K)^{\text{Fib}_2(k)}$ which has a $1$ in the left upper corner of each matrix factor $M_{|W|}(K)$, and $0's$ elsewhere, that is,
$(p_{1/2})_W= e_{00}(W)$ for each $W\in \mathbb V_{1/2}$. Recall from the beginning of Subsection \ref{subsection-computations.Betti.no.lamplighter} that the elements of $\mathbb V_{1/2}$ are all the clopen sets of the form $[1\ul{1}0^{i_1}10^{i_2}1\cdots 10^{i_r}11]$, where $r\ge 0$ and $i_1,\dots i_r\ge 1$ (we interpret this term to be $[1\ul{1}1]$ if $r=0$).

By \cite[Proposition 4.20]{AC2}, there is a $*$-isomorphism from $(\calS_{1/2}[[t;T]],\odot,-)$ onto $p_{1/2}\gotR_{1/2} p_{1/2}$. Composing the isomorphism $K\langle \langle \pazX \rangle \rangle \cong \calS_{1/2} [[t;T]]$ from Proposition \ref{prop:isospecial} (which respects the respective Hadamard products and involutions) with this isomorphism we obtain a $*$-isomorphism
$$\Delta \colon K\langle \langle \pazX \rangle \rangle^{\circ}  \overset{\cong}{\longrightarrow} p_{1/2}\gotR_{1/2} p_{1/2}$$
such that
$$\Delta (f)_{[1\ul{1}0^{i_1}1\cdots 10^{i_r}11]} = (f, x_{i_r}\cdots x_{i_1})e_{00}([1\ul{1}0^{i_1}1\cdots 10^{i_r}11]) \qquad (f\in K\langle \langle \pazX \rangle \rangle )$$
for each $i_1,\dots ,i_r\ge 1$. Here, $(f, x_{i_r}\cdots x_{i_1})$ is the coefficient of $f$ corresponding to the monomial $x_{i_r}\cdots x_{i_1}$.

For each $n\ge 1$, set $\pazX_n=\{ x_1,\dots , x_n\}$. Let $L$ be a {\it language} over $\pazX_n$, i.e., a subset of the free monoid $\pazX_n^*$ on $\pazX_n$. We now give a formula for the rank of the element $\Delta(\ul{L})$, where
$$\ul{L} := \sum_{w\in L} w \in K\langle \langle \pazX_n \rangle \rangle $$
is the {\it characteristic series} of $L$ (\cite[p.8]{BR}). Note that, for $W=[1\ul{1}0^{i_1}10^{i_2}1\cdots 10^{i_r}11]\in \mathbb V_{1/2}$, we have
$$\Delta (\ul{L})_W= \begin{cases} e_{00}(W) & \text{ if } x_{i_r}\cdots x_{i_2}x_{i_1}\in L \\
0  & \text{ if } x_{i_r}\cdots x_{i_2}x_{i_1}\notin L\end{cases}.$$
Moreover, we have $\mu (W)= 2^{-(i_1+\cdots +i_r+r+3)}= 2^{-3}2^{-d(w)}$, where $w=x_{i_r}\cdots x_{i_2}x_{i_1}$. Let $s(L)= \sum_{j\ge 0} \alpha_j x^j$ be the series in $\Z^+[[x]]$ such that $\alpha_j$ is the number of words $w\in L$ such that $d(w)= j$. Then we have
\begin{equation}\label{equation-formula.for.rk.rat.series}
\rk(\Delta(\ul{L})) = \sum_{W \in \V_{1/2}} \mu(W) \Rk(\Delta(\ul{L})_W) =   \frac{1}{2^3}\sum_{w\in L} \frac{1}{2^{d(w)}} = \frac{1}{8}s(L)\Big(\frac{1}{2}\Big).
\end{equation}
Here we make use of \eqref{equation-rank.over.Rn} for the first equality, of the above observations for the second, and of the definition of $s(L)$ for the third. We say that $s(L) \in \Z^+[[x]]$ is the {\it generating function} of $L$ with respect to $d$, and we denote by $\alpha_L$ the real number $\rk (\Delta (\ul{L}))$ described in \eqref{equation-formula.for.rk.rat.series}.

We now recall the notion of a rational language (see \cite[Chapter 3]{BR}).

\begin{definition}\label{definition-rational.language}
The set of \textit{rational languages} over a finite set $F$ is the smallest set of subsets of the free monoid $F^*$ containing all the finite subsets of $F^*$ and closed under the following operations:
\begin{enumerate}[a)]
	\item union $L_1\cup L_2$;
	\item product $L_1L_2 := \{w_1 w_2 \in F^* \mid w_1 \in L_1, w_2 \in L_2\}$;
	\item $*$-product $L^* := \bigcup_{k \geq 0} L^k$.
\end{enumerate}
\end{definition}

A characterization of the rational languages is the following (\cite[Lemma 3.1.4]{BR}): a language over a finite alphabet $F$ is rational if and only if it is the support of some rational series $z \in \Z^+ \langle \langle F \rangle \rangle$. In fact, if $L$ is a rational language over $F$, its characteristic series $\ul{L} = \sum_{w \in L}w$ belongs to $R_{\mathrm{rat}}\langle F \rangle^{\circ}$ for any semiring $R$  (\cite[Proposition 3.2.1]{BR}).

It turns out that the set of rational languages over $F$, which we will denote by $\gotL(F)$, forms a Boolean subalgebra of $P (F^*)$, the power set of $F^*$ (\cite[Corollary 3.1.5]{BR}). However, the converse may not be true. Indeed, for a subfield $K$ of $\C$ closed under complex conjugation, given a rational series $z = \sum_{w \in F^*} \lambda_w w \in K_{\mathrm{rat}}\langle F \rangle$ it may be the case that its support $L = \text{supp}(z)$ is not a rational language. We put
$$\mathfrak K(F) := \{ \text{supp}(z) \mid z \in K_{\mathrm{rat}}\langle F \rangle^{\circ}\} \subseteq P(F^*),$$
and we let $\mathbf{B}_{\text{rat}}^K(F)$ be the Boolean algebra of subsets of $F^*$ generated by $\mathfrak K (F)$. Then by \cite[Proposition 6.10]{AC2} the $*$-regular closure of $K_{\text{rat}}\langle F\rangle ^{\circ}$ in $K\langle \langle F\rangle \rangle^{\circ}$ is contained in the set of formal power series whose support belongs to $\mathbf{B}_{\mathrm{rat}}^K(F)$, and for each set $L\in \mathbf{B}_{\mathrm{rat}}^K(F)$, the characteristic series $\ul{L}$ belongs to the $*$-regular closure.

We gather some of these facts in the following.  

\begin{proposition}\label{proposition-numbers.from.languages}
Let $n\ge 1$ and let $\pazX_n=\{x_1,\dots , x_n\}$, endowed with the degree function $d(x_i)= i+1$. With the above notation, we have $\gotL(\pazX_n) \subseteq \mathfrak K (\pazX_n) \subseteq \textbf{\emph{B}}_{\mathrm{rat}}^K(\pazX_n)$. The sets $\gotL(\pazX_n)$ and $\textbf{\emph{B}}_{\mathrm{rat}}^K(\pazX_n)$ are Boolean subalgebras of $P(\pazX_n^*)$, but this is not the case in general for $\mathfrak K (\pazX_n)$. Moreover, if $L\in \mathbf{B}_{\mathrm{rat}}^K(\pazX_n)$, then $\alpha_L\in \calG (\Gamma, K)$.
\end{proposition}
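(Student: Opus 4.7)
For the two inclusions: if $L\in \gotL(\pazX_n)$, then by the characterization of rational languages recalled just before the statement, $\ul{L}$ is $\Z^+$-rational, hence $\ul{L}\in K_{\mathrm{rat}}\langle \pazX_n\rangle^{\circ}$ and $L=\mathrm{supp}(\ul{L})\in \mathfrak{K}(\pazX_n)$; the inclusion $\mathfrak{K}(\pazX_n)\subseteq \mathbf{B}_{\mathrm{rat}}^K(\pazX_n)$ is immediate from the definition of the latter as the Boolean algebra generated by the former. The Boolean structure of $\gotL(\pazX_n)$ is the quoted \cite[Corollary 3.1.5]{BR}, and that of $\mathbf{B}_{\mathrm{rat}}^K(\pazX_n)$ is tautological. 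The family $\mathfrak{K}(\pazX_n)$ is always closed under finite intersection, since $\mathrm{supp}(z_1\odot z_2)=\mathrm{supp}(z_1)\cap \mathrm{supp}(z_2)$ and $K_{\mathrm{rat}}\langle \pazX_n\rangle$ is closed under the Hadamard product; however, for $n\ge 2$ the rational series $\sigma x_1\sigma-\sigma x_2\sigma$ (with $\sigma=(1-x_1-\cdots-x_n)^{-1}$) has support $\{w:|w|_{x_1}\neq |w|_{x_2}\}$, while by the theory of rational series over a field (see \cite[Chapter 6]{BR}) the complementary ``balanced'' language $\{w:|w|_{x_1}=|w|_{x_2}\}$ is not the support of any element of $K_{\mathrm{rat}}\langle \pazX_n\rangle$. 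Thus $\mathfrak{K}(\pazX_n)$ is not closed under complementation and is therefore not Boolean in general.

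For the main claim, fix $L\in \mathbf{B}_{\mathrm{rat}}^K(\pazX_n)$. By the result of \cite[Proposition 6.10]{AC2} recalled just before the statement, $\ul{L}$ lies in the $*$-regular closure of $K_{\mathrm{rat}}\langle \pazX_n\rangle^{\circ}$ inside $K\langle \langle \pazX_n\rangle \rangle^{\circ}$. Viewing $\pazX_n\subseteq \pazX$ and applying the $*$-isomorphism $\Delta\colon K\langle\langle \pazX\rangle\rangle^{\circ}\xrightarrow{\cong} p_{1/2}\gotR_{1/2}p_{1/2}$ (from the paragraph after Proposition \ref{prop:isospecial}), we deduce that $\Delta(\ul{L})$ belongs to the $*$-regular closure of $\Delta(K_{\mathrm{rat}}\langle \pazX_n\rangle^{\circ})$ inside $p_{1/2}\gotR_{1/2}p_{1/2}$. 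By \cite[Subsection 6.2]{AC2}, under the Fourier identification of Proposition \ref{proposition-coincide.lamplighter.group.alg} the image $\Delta(K_{\mathrm{rat}}\langle \pazX\rangle^{\circ})$ sits inside $\calR_{K\Gamma}$, whence $\Delta(\ul{L})\in \calR_{K\Gamma}$. Finally, \cite[Proposition 4.1]{AC2} asserts that every value of the canonical rank on $\calR_{K\Gamma}$ lies in $\calG(\Gamma,K)$, and combining this with the identity $\alpha_L=\rk(\Delta(\ul{L}))$ of \eqref{equation-formula.for.rk.rat.series} yields $\alpha_L\in \calG(\Gamma,K)$.

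The principal difficulty lies in the second paragraph, specifically in transporting the $*$-regular closure operation through $\Delta$ and identifying its image with a subring of $\calR_{K\Gamma}$: one must keep track simultaneously of the Hadamard product on $K\langle\langle \pazX\rangle\rangle^{\circ}$, the matrix product on $p_{1/2}\gotR_{1/2}p_{1/2}$, and the operator product on $\calR_{K\Gamma}$. Since $\Delta$ is a $*$-ring isomorphism, $*$-regular closures are preserved automatically, and the containment $\Delta(K_{\mathrm{rat}}\langle \pazX\rangle^{\circ})\subseteq \calR_{K\Gamma}$ is already established in \cite[Subsection 6.2]{AC2}; hence no new algebraic input is required, and the proof is essentially a synthesis of Proposition \ref{prop:isospecial}, \cite[Propositions 4.1 and 6.10]{AC2}, and the formula \eqref{equation-formula.for.rk.rat.series}.
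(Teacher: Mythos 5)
Your proposal follows the same overall route that the paper is implicitly taking when it cites \cite[Chapter 3]{BR} and \cite[Propositions 4.1, 4.26, 6.7 and 6.10]{AC2}: the inclusions $\gotL(\pazX_n)\subseteq \gotK(\pazX_n)\subseteq \mathbf{B}_{\mathrm{rat}}^K(\pazX_n)$, the Boolean structure arguments, and the passage $\ul{L}\in\calR(K_{\mathrm{rat}}\langle\pazX_n\rangle^{\circ},K\langle\langle\pazX_n\rangle\rangle^{\circ}) \xrightarrow{\Delta}\ \cdots\ \xrightarrow{\text{rank}}\alpha_L$. However, the step ``whence $\Delta(\ul{L})\in\calR_{K\Gamma}$'' hides a genuine gap, precisely where you claim ``no new algebraic input is required.'' Transporting by $\Delta$ only places $\Delta(\ul{L})$ in $\calR\big(\Delta(K_{\mathrm{rat}}\langle\pazX_n\rangle^{\circ}),\,p_{1/2}\gotR_{1/2}p_{1/2}\big)$; the containment $\Delta(K_{\mathrm{rat}}\langle\pazX_n\rangle^{\circ})\subseteq\calR_{K\Gamma}$ does not by itself force the closure taken inside $p_{1/2}\gotR_{1/2}p_{1/2}$ to land in $\calR_{K\Gamma}$, because $p_{1/2}\gotR_{1/2}p_{1/2}$ is not a subring of $\calR_{K\Gamma}$. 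To close this you must (i) view $p_{1/2}\gotR_{1/2}p_{1/2}$ inside $p_{1/2}\gotR_{\rk}p_{1/2}$ via the compatible embeddings $\gotR_{1/2}\hookrightarrow\gotR_{\infty}\hookrightarrow\gotR_{\rk}$, (ii) note $p_{1/2}\calR_{K\Gamma}p_{1/2}$ is a $*$-regular corner of $p_{1/2}\gotR_{\rk}p_{1/2}$ containing $\Delta(K_{\mathrm{rat}}\langle\pazX_n\rangle^{\circ})$, and (iii) invoke that a $*$-regular closure is unchanged when the ambient $*$-regular ring is enlarged. One must also verify that the rank in \eqref{equation-formula.for.rk.rat.series} agrees, under these identifications, with the canonical rank on $\calR_{K\Gamma}$ restricted from $\calU(\Gamma)$, which rests on the uniqueness clause of Theorem \ref{theorem-rank.function}. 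This is exactly the content encapsulated in the paper's citations to \cite[Propositions 4.26 and 6.7]{AC2}, which your proposal omits.

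A secondary issue: your explicit witness for $\gotK(\pazX_n)$ not being Boolean rests on the assertion that the balanced language $\{w:|w|_{x_1}=|w|_{x_2}\}$ is not the support of any $K$-rational series. That is a stronger statement than ``the balanced language is not rational,'' and a vague pointer to ``the theory of rational series over a field'' is not enough; you should cite the precise result in \cite{BR} (the discussion around \cite[Example 3.4.1]{BR} and Hankel-rank criteria is the right place). The rest of the first paragraph — the two inclusions, the Boolean structure of $\gotL(\pazX_n)$ and $\mathbf{B}_{\mathrm{rat}}^K(\pazX_n)$, and closure of $\gotK(\pazX_n)$ under intersection via the Hadamard product over a field — is correct.
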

\begin{proof}
See \cite[Chapter 3]{BR}. The  fact that $\alpha_L\in \calG (\Gamma, K)$ for  $L\in \mathbf{B}_{\mathrm{rat}}^K(\pazX_n)$ follows from \cite[Propositions 4.1, 4.26, 6.7 and 6.10]{AC2}.
\end{proof}

We now show that rational languages give rise to rational $\ell^2$-Betti numbers.

\begin{lemma}\label{lemma-generating.function.rational.language}
If a series $\sum_{j \geq 0} \alpha_j x^j \in \Z^+[[x]]$ is the generating function of a rational language $L \in \gotL(\pazX_n)$ with respect to $d$, then it is a rational series with constant term either $0$ or $1$. Consequently, $\alpha_L \in \Q$.
\end{lemma}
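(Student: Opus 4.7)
The strategy is to interpret $s(L)$ as the image of the characteristic series $\underline{L}=\sum_{w\in L}w$ under the monoid morphism $\phi\colon \pazX_n^*\to \{x^j:j\ge 0\}$, $x_i\mapsto x^{d(x_i)}=x^{i+1}$, and then invoke the Sch\"utzenberger linear representation of rational series to transport rationality. Because each $d(x_i)\ge 2$, only finitely many words of $\pazX_n^*$ have any given degree, so $\phi$ induces a well-defined $\Z$-linear map on the relevant series and sends $\underline{L}$ to $s(L)$.

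First, as recalled in the paragraph preceding the lemma, Proposition 3.2.1 of \cite{BR} gives $\underline{L}\in\Z_{\mathrm{rat}}\langle \pazX_n\rangle$. By Sch\"utzenberger's representation theorem, there exist an integer $N\ge 1$, matrices $\mu(x_i)\in M_N(\Z)$ (extended multiplicatively to $\mu\colon \pazX_n^*\to M_N(\Z)$), a row vector $\lambda\in \Z^{1\times N}$ and a column vector $\gamma\in \Z^{N\times 1}$ such that $(\underline{L},w)=\lambda\,\mu(w)\,\gamma$ for every $w\in \pazX_n^*$. Substituting this into the definition of $s(L)$ and regrouping by word length yields
$$s(L)(x)=\sum_{w\in \pazX_n^*}(\underline{L},w)\,x^{d(w)} = \lambda\Big(\sum_{k\ge 0} M(x)^k\Big)\gamma = \lambda\,(I-M(x))^{-1}\,\gamma,$$
where $M(x):=\sum_{i=1}^n \mu(x_i)\,x^{i+1}\in M_N(\Z[x])$. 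Since $M(0)=0$, $I-M(x)$ is invertible in $M_N(\Z[[x]])$, and its inverse has entries in $\Q(x)\cap \Z[[x]]$ by the cofactor formula; hence $s(L)\in \Z[[x]]$ is a rational power series. For the constant term, note that $d(x_i)=i+1\ge 2$ forces the empty word $\epsilon$ to be the unique element of $\pazX_n^*$ with $d(w)=0$, so $\alpha_0=(\underline{L},\epsilon)\in\{0,1\}$ according as $\epsilon\in L$ or not.

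For the consequence $\alpha_L\in\Q$, recall from \eqref{equation-formula.for.rk.rat.series} that $\alpha_L=\tfrac{1}{8}s(L)(1/2)$. Dominating $s(L)$ coefficientwise by the generating function of all of $\pazX_n^*$, namely $(1-(x^2+x^3+\cdots+x^{n+1}))^{-1}$, whose denominator at $x=1/2$ equals $\tfrac12+2^{-(n+1)}>0$, shows that $s(L)$ converges absolutely at $x=1/2$. Writing $s(L)=P(x)/Q(x)$ with $P,Q\in\Z[x]$ coprime, the value $Q(1/2)$ must be non-zero (otherwise $P(1/2)=0$ too, contradicting coprimality together with the finiteness of $s(L)(1/2)$), so $s(L)(1/2)=P(1/2)/Q(1/2)\in \Q$ and hence $\alpha_L\in\Q$.

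The only technical points are (i) the identification of $s(L)$ with $\phi(\underline{L})$ at the level of formal power series, which is legitimate thanks to the lower bound $d(x_i)\ge 2$, and (ii) guaranteeing that the denominator of the reduced rational form of $s(L)$ does not vanish at $1/2$; both are handled above and neither is substantial.
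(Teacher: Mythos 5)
Your proof is correct and follows essentially the same route as the paper: both reduce $s(L)$ to a one-variable rational power series by applying a degree-preserving morphism to $\underline{L}$ (the paper cites \cite[Proposition 1.4.2]{BR} where you unpack the Sch\"utzenberger linear representation directly) and then evaluate at $x=1/2$. The only difference is that you spell out the convergence of $s(L)$ at $1/2$ and the non-vanishing of the denominator there, details the paper leaves implicit after invoking the normal form $p(x)/(1-xq(x))$ from \cite[Proposition 6.1.1]{BR}.
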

\begin{proof}
Suppose $\alpha_j = \#(L \cap d^{-1}(\{j\}))$, being $L \in \gotL(\pazX_n)$ a rational language. In particular the series $\ul{L}= \sum_{w \in L}w$ is rational. Define a map $\rho : \pazX_n \ra \Z^+[[x]]$ by setting $\rho(b) = x^{d(b)}$. By \cite[Proposition 1.4.2]{BR}, the map $\rho$ extends uniquely to a morphism $\rho : \Z^+\langle \langle \pazX_n \rangle \rangle \ra \Z^+[[x]]$ which induces the identity on $\Z^+$ and, moreover, preserves rationality. Then
$$\rho\Big( \sum_{w \in L}w \Big) = \sum_{w \in L}x^{d(w)} = \sum_{j \geq 0} \alpha_j x^j$$
is rational in $\Z^+[[x]]$, and clearly its constant term is either $0$ or $1$.

By \cite[Proposition 6.1.1]{BR}, there exist polynomials $p(x),q(x) \in \Z[x]$ such that the series $\sum_{j \geq 0}\alpha_j x^j$ is the power series expansion of the rational function $\frac{p(x)}{1-xq(x)}$, that is,
$$s(L) = \sum_{j \geq 0}\alpha_j x^j = \frac{p(x)}{1-xq(x)} = p(x) \Big[1 + \sum_{j \geq 0}(xq(x))^j\Big].$$
Consequently, if $L \in \gotL(\pazX_n)$,  then $\alpha_L = \frac{1}{8}s(L)\big(\frac{1}{2}\big) \in \Q$.
\end{proof}

We obtain now a concrete irrational algebraic number of the form $\alpha_L$ for a suitable $L\in \mathbf{B}_{\text{rat}}^{\Q}(\pazX_2)$.

\begin{theorem}\label{thm:irrat-algl2betti}
There is $L\in \mathbf{B}_{{\rm rat}}^{\Q}(\pazX_2)$ such that $\alpha_L=\frac{1}{4}\sqrt{\frac{2}{7}}$. In particular, $\frac{1}{4}\sqrt{\frac{2}{7}}\in \calG(\Gamma, \mathbb Q)$.
\end{theorem}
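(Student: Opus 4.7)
The plan is to take $L := \{w \in \pazX_2^* : |w|_{x_1} = |w|_{x_2}\}$, the language of \emph{balanced} words over $\pazX_2=\{x_1,x_2\}$, and show that $L \in \mathbf{B}_{{\rm rat}}^{\Q}(\pazX_2)$ and that $\alpha_L = \tfrac{1}{4}\sqrt{2/7}$. The appearance of $\sqrt{2/7}$ strongly suggests using the classical identity $\sum_{k\geq 0}\binom{2k}{k}y^k = (1-4y)^{-1/2}$, which in turn points to $L$ being the set of balanced words: a balanced word with $k$ copies of each letter contributes a factor $\binom{2k}{k}$, and with $d(x_1)=2,\,d(x_2)=3$ its degree is $5k$, so evaluation at $y=2^{-5}=1/32$ drops us exactly inside the radius of convergence.

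The first step is membership of $L$ in $\mathbf{B}_{{\rm rat}}^{\Q}(\pazX_2)$. Setting $\sigma := (1-x_1-x_2)^{-1} = \sum_{w \in \pazX_2^*} w \in \Q_{{\rm rat}}\langle \pazX_2\rangle$, I would introduce
$$f := \sigma(x_1 - x_2)\sigma \in \Q_{{\rm rat}}\langle \pazX_2\rangle.$$
Expanding $\sigma x_i \sigma = \sum_{u,v}u\, x_i\, v$ and counting factorizations $w = u\, x_i\, v$ shows that $(f,w) = |w|_{x_1} - |w|_{x_2}$. Hence $\mathrm{supp}(f) = \pazX_2^* \setminus L$ lies in $\mathfrak{K}(\pazX_2)$. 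Since $\pazX_2^* \in \gotL(\pazX_2) \subseteq \mathbf{B}_{{\rm rat}}^{\Q}(\pazX_2)$ and $\mathbf{B}_{{\rm rat}}^{\Q}(\pazX_2)$ is a Boolean subalgebra of $P(\pazX_2^*)$ containing $\mathfrak{K}(\pazX_2)$ by Proposition~\ref{proposition-numbers.from.languages}, I conclude $L = \pazX_2^* \setminus \mathrm{supp}(f) \in \mathbf{B}_{{\rm rat}}^{\Q}(\pazX_2)$.

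The second step is the computation of $\alpha_L$. From the formula derived in \eqref{equation-formula.for.rk.rat.series}, we have $\alpha_L = \tfrac{1}{8}\,s(L)(\tfrac12)$, where
$$s(L)(x) = \sum_{w\in L} x^{d(w)} = \sum_{k\geq 0}\binom{2k}{k}\, x^{5k}.$$
Evaluating at $x=1/2$ gives $s(L)(1/2) = \sum_{k\geq 0}\binom{2k}{k}(1/32)^k = (1-1/8)^{-1/2} = \sqrt{8/7}$, so
$$\alpha_L = \tfrac{1}{8}\sqrt{8/7} = \tfrac{\sqrt{2}}{4\sqrt{7}} = \tfrac{1}{4}\sqrt{\tfrac{2}{7}}.$$
The membership $\tfrac{1}{4}\sqrt{2/7} \in \calG(\Gamma,\Q)$ then follows at once from the last clause of Proposition~\ref{proposition-numbers.from.languages}.

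I do not anticipate any genuine obstacle: the only delicate point is the bookkeeping identifying $(\sigma(x_1-x_2)\sigma, w)$ with $|w|_{x_1}-|w|_{x_2}$ (standard), and then a direct application of the central binomial generating function. The \emph{idea}-level step — guessing that the balanced-words language is the right choice — is motivated by matching the desired value $\tfrac{1}{4}\sqrt{2/7}$ against the algebraic numbers produced by $(1-4y)^{-1/2}$ at dyadic points.
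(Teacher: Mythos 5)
Your proof is correct and follows essentially the same route as the paper: same choice of the balanced-words language $L$, same reduction $\alpha_L = \tfrac18 s(L)(\tfrac12)$ via Equation~\eqref{equation-formula.for.rk.rat.series}, and the same central binomial generating function. The only variation is that where the paper invokes \cite[Example 3.4.1]{BR} to place $L$ in $\mathbf{B}_{\mathrm{rat}}^{\Q}(\pazX_2)$, you produce the explicit rational series $f=\sigma(x_1-x_2)\sigma$ with $(f,w)=|w|_{x_1}-|w|_{x_2}$ and take the complement of its support; this is a correct, self-contained substitute for the citation but not a different argument.
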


\begin{proof}
Let $L$ be the language on $\pazX_2=\{x_1,x_2\}$ given by
$$L= \{ w\in \pazX_2^* \mid |w|_{x_1} =|w|_{x_2} \}.$$
Here $|w|_{x_i}$ is the number of appearances of $x_i$ in $w$, for $i=1,2$. 
Then by \cite[Example 3.4.1]{BR}, $L\in \mathbf{B}_{\text{rat}}^{\Q}(\pazX_2)$ (although $L$ is {\it not} a rational language). If $w\in L$ then, since $d(x_1)= 2$ and $d(x_2)=3$, we get that $d(w)=5l$, where $l=|w|_{x_1}=|w|_{x_2}$. Therefore we get from Proposition \ref{proposition-numbers.from.languages} that
$$\alpha_L = \frac{1}{8} \sum_{j \geq 0} \frac{\alpha_j}{2^j} = \frac{1}{8} \sum_{l\ge 0} \binom{2l}{l} \frac{1}{2^{5l}} \in \calG (\Gamma, \Q).$$
Since
$$\sum_{l\ge 0} \binom{2l}{l} x^l = \frac{1}{\sqrt{1-4x}} \quad \text{ for } |x| < \frac{1}{4},$$
we obtain $\alpha _L = \frac{1}{4}\sqrt{\frac{2}{7}} \in \calG(\Gamma, \Q)$.	
\end{proof}

Taking different choices of variables $x_r,x_s$, with $r\ne s$, we indeed obtain that $\calG (\Gamma, \Q)$ contains all the algebraic numbers
$$\frac{1}{8}\sqrt{\frac{2^t}{2^t-1}}, \quad \text{ for } t \geq 3.$$

Similar formulas can be obtained to compute the ranks of elements coming from the copy of the algebra $\Q_{\text{rat}}\langle \pazX\rangle$ in the $*$-regular closure $\calR_n$ of $\calA_n$ in $\mathfrak R_n$, where $\calA_n$ are the approximations of $\Q \Gamma$ described in Subsection \ref{subsection-approximating.algebras.lamplighter}. The only differences are  that we have different degree functions on $\pazX$, depending on $n$, and that the factor $1/8$ must be substituted by the factor $2^{-2n-2}$.

\begin{remark}
By Theorem \ref{thm:irrat-algl2betti}, $\frac{1}{4}\sqrt{\frac{2}{7}}$ is a ``virtual'' $\ell^2$-Betti number arising from the lamplighter group $\Gamma $. This means that there are two $\ell^2$-Betti numbers arising from $\Gamma $, $\alpha_1$ and $\alpha_2$, such that $\frac{1}{4}\sqrt{\frac{2}{7}} = \alpha_1 - \alpha_2$. It would be interesting to know whether $\frac{1}{4}\sqrt{\frac{2}{7}}$ is itself an $\ell^2$-Betti number, and in that case, to determine a concrete matrix over $\mathbb Q \Gamma$ witnessing this fact. In particular, this would solve a question posed by Grabowski in \cite[p. 32]{Gra14}, see also \cite[Question 3]{Gra16}. We do not know any example of a group $G$ for which $\calC (G,\mathbb Q) \ne \calG (G,\mathbb Q)\cap \mathbb R^+$. 
\end{remark}

We close this section with the following problem:
\begin{question}\label{quest:algebraics}
With the notation established before, is $\alpha_L$ always an algebraic number when $L \in \mathbf B_{{\rm rat}}^{\Q}(\pazX_n)$?
\end{question}

\section{The odometer algebra}\label{section-odometer.alg}

In this last section we focus on computing the whole set of values that the Sylvester matrix rank function constructed from Theorem \ref{theorem-rank.function} can achieve in the case of the generalized odometer algebra. We first recall its definition.

Fix a sequence of natural numbers $\ol{n} = (n_i)_{i \in \N}$ with $n_i \geq 2$ for all $i \in \N$, and consider $X_i$ to be the finite space $\{0,1,...,n_i-1\}$ endowed with the discrete topology. From these we form the topological space $X = \prod_{i \in \N}X_i$ endowed with the product topology, which is in fact a Cantor space. Let $T$ be the homeomorphism on $X$ given by the odometer, namely for $x = (x_i) \in X$, $T$ is given by
$$T(x) = \begin{cases}
					(x_1+1,x_2,x_3,...) & \text{ if } x_1 \neq n_1-1, \\
					(0,...,0,x_{m+1} + 1,x_{m+2},...) & \text{ if } x_j = n_j-1 \text{ for $1 \leq j \leq m$ and } x_{m+1} \neq n_m-1, \\
					(0,0,...) & \text{ if } x_i = n_i-1 \text{ for all }i \in \N.
				 \end{cases}$$
Note that the odometer action is just addition of $(1,0,...)$ by carry-over.

Let $(K,-)$ be any field with a positive definite involution $-$. The \textit{generalized odometer algebra} is defined as the crossed product $*$-algebra $\calO(\ol{n}) := C_K(X) \rtimes_T \Z$. We obtain a measure $\mu$ on $X$ by taking the usual product measure, where we consider the measure on each component $X_i$ which assigns mass $\frac{1}{n_i}$ on each point in $X_i$. It is well-known (e.g. \cite[Section VIII.4]{Dav}) that $\mu$ is an ergodic, full and $T$-invariant probability measure on $X$, which in turn coincides with the Haar measure $\wh{\mu}$ on $X$ if one considers $X$ as an abelian group with addition by carry-over. We denote by $\rk_{\calO(\ol{n})}$ the Sylvester matrix rank function on $\calO(\ol{n})$ given by Theorem \ref{theorem-rank.function}.

\subsection{Characterizing the \texorpdfstring{$*$}{}-regular closure \texorpdfstring{$\calR_{\calO(\ol{n})}$}{}}\label{subsection-odometer.reg.closure}

We explicitly compute the $*$-regular closure $\calR_{\calO(\ol{n})}$ of the odometer algebra. Recall that it is defined as the $*$-regular closure of $\calO(\ol{n})$ inside $\gotR_{\rk}$ through the embedding $\calO(\ol{n}) \hookrightarrow \gotR_{\rk}$ given by Theorem \ref{theorem-rank.function}. We will use the notation $[a_1a_2 \dots a_r]$ for the cylinder sets $\{ (x_j)_{j\in \N}\mid x_i=a_i, 1 \leq i \leq r\}$, where $0\le a_i < n_i$, $1 \leq i \leq r$.

Define new integers $p_m := n_1 \cdots n_m$ for $m \in \N$. At each level $m \geq 1$, we take $E_m = [0 0 \cdots 0]$ (with $m$ zeroes) for the sequence of clopen sets, whose intersection gives the point $y = (0,0,...) \in X$. We take the partition $\calP_m$ of the complement $X \backslash E_m$ to be the obvious one, namely
$$\calP_m = \{[{1} 0 \cdots 0], ..., [{(n_1-1)} 0 \cdots 0],..., [{(n_1-1)} (n_2-1) \cdots (n_m-1)]\}.$$
We denote by $\calO(\ol{n})_m$ the unital $*$-subalgebra of $\calO(\ol{n})$ generated by the partial isometries $\{\chi_Z t \mid Z \in \calP_m\}$. The quasi-partition $\ol{\calP}_m$ is really simple in this case: write $Z_{m,l} =  T^l(E_m)$ for $1 \le l < p_m$. Note that these clopen sets form exactly the partition $\calP_m$, and that $T(Z_{m,p_m-1}) = E_m$. Therefore there is only one possible $W \in \V_m$, which has length $p_m$ and is given by
$$W = E_m \cap T^{-1}(Z_{m,1}) \cap T^{-2}(Z_{m,2}) \cap \cdots \cap T^{-p_m+1}(Z_{m,p_m-1}) \cap T^{-p_m}(E_m) = E_m.$$

The representations $\pi_m : \calO(\ol{n})_m \to \gotR_m, x \mapsto (h_W \cdot x)_W$ become $*$-isomorphisms, where $\gotR_m = M_{p_m}(K)$. 
Under this identification, the embeddings $\iota_m : \calO(\ol{n})_m \hookrightarrow \calO(\ol{n})_{m+1}$ become the block-diagonal embeddings
$$M_{p_m}(K) \hookrightarrow M_{p_{m+1}}(K), \quad x \mapsto \text{diag}(x,\stackrel{n_{m+1}}{\dots},x),$$
so that $\calO(\ol{n})_{\infty} := \varinjlim_m \calO(\ol{n})_m \cong \varinjlim_m M_{p_m}(K)$. Note that $\calO(\ol{n})_{\infty}$ is already $*$-regular, but it does not contain $\calO(\ol{n})$: in fact, it is contained in $\calO(\ol{n})$. Intuitively, in order to get containment of the whole algebra $\calO(\ol{n})$ we need to adjoin to $\calO(\ol{n})_{\infty}$ the element $t$.

\begin{definition}\label{definition-An.t}
For every $m \geq  1$, we denote by $\calO(\ol{n})^t_m$ the unital $*$-subalgebra of $\calO(\ol{n})$ generated by $\calO(\ol{n})_m$ and $t$.
\end{definition}
We can completely characterize these $*$-subalgebras.
\begin{lemma}\label{lemma-charac.An.t}
There exists a $*$-isomorphism $\calO(\ol{n})^t_m \cong M_{p_m}(K[t^{p_m}, t^{-p_m}])$.
\end{lemma}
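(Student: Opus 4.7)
The plan is to exhibit an explicit $*$-algebra homomorphism
\[
\Phi\colon M_{p_m}\bigl(K[s,s^{-1}]\bigr) \longrightarrow \calO(\ol n)^t_m,
\]
sending the standard matrix units $E_{ij}$ to the matrix units $e_{ij}=e_{ij}(W)$ of $\calO(\ol n)_m \cong M_{p_m}(K)$ (with $W = E_m$) and the indeterminate $s$ to $t^{p_m}$, and then to verify that $\Phi$ is a $*$-isomorphism.

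The first step is to check well-definedness, which reduces to showing that $t^{p_m}$ is central in $\calO(\ol n)^t_m$. The crucial structural observation is that $T^{p_m}$ fixes (as a set) every clopen subset of $X$ determined by its first $m$ coordinates, so $t^{p_m}$ commutes with $\chi_{E_m}$ and $\chi_{X\setminus E_m}$, and therefore with each $e_{ij}$ (these matrix units being built as words in those characteristic functions and $t^{\pm 1}$). Together with the identity $(t^{p_m})^* = t^{-p_m}$, matching the $*$-operation $s \mapsto s^{-1}$ on $K[s, s^{-1}]$, this makes $\Phi$ a $*$-algebra homomorphism.

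For injectivity, suppose $\Phi(A) = 0$ for $A = (a_{ij}(s))$. Sandwiching $\sum a_{ij}(t^{p_m})\, e_{ij} = 0$ between $e_{kk}$ and $e_{ll}$ and using centrality of $t^{p_m}$ isolates $a_{kl}(t^{p_m})\, e_{kl} = 0$; right-multiplying by $e_{lk}$ then yields $a_{kl}(t^{p_m})\, \chi_{Z_{m,k}} = 0$. The natural $\Z$-grading $\calO(\ol n) = \bigoplus_i C_K(X)\, t^i$ then forces each coefficient of $a_{kl}$ to vanish, because the monomials $c_r\, \chi_{Z_{m,k}}\, t^{rp_m}$ lie in distinct graded components.

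The main obstacle is surjectivity: since $\calO(\ol n)^t_m$ is generated as a $*$-algebra by $\calO(\ol n)_m$ together with $t$, it suffices to exhibit $t$ itself in the image of $\Phi$ (taking adjoints then supplies $t^{-1}$). I would decompose
\[
t \;=\; \chi_{X\setminus E_m}\, t + \chi_{E_m}\, t \;=\; \sum_{l=1}^{p_m-1} e_{l,l-1} \;+\; \chi_{E_m}\, t,
\]
so that only the ``wrap-around'' term $\chi_{E_m}\, t$ requires identification. The key claim, which encodes precisely the twisting between $\calO(\ol n)_m$ and the Laurent polynomial factor, is
\[
\chi_{E_m}\, t \;=\; e_{0,\, p_m-1}\, t^{p_m}.
\]
To establish it I would first prove by induction on $k$ that $(\chi_{X\setminus E_m}\, t)^k\, \chi_{E_m} = \chi_{Z_{m,k}}\, t^k$ for $0 \leq k \leq p_m-1$, so in particular $e_{p_m-1,0} = \chi_{Z_{m,p_m-1}}\, t^{p_m-1}$; then $t\cdot e_{p_m-1,0} = \chi_{T(Z_{m,p_m-1})}\, t^{p_m} = \chi_{E_m}\, t^{p_m}$, and right-multiplying this identity by $e_{0, p_m-1}$ while invoking $e_{p_m-1,0}\, e_{0, p_m-1} = e_{p_m-1, p_m-1}$ and centrality of $t^{p_m}$ delivers the displayed identity. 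Consequently $t = \Phi\bigl(\sum_{l=1}^{p_m-1} E_{l,l-1} + E_{0, p_m-1}\, s\bigr)$, and surjectivity follows.
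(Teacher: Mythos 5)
Your proof is correct and takes essentially the same approach as the paper: both hinge on the complete system of matrix units $e_{ij}^{(m)}=e_{ij}(E_m)$, the fact that $t^{p_m}$ centralizes them, and the decomposition $t=\sum_{i=0}^{p_m-2}e_{i+1,i}^{(m)}+t^{p_m}e_{0,p_m-1}^{(m)}$. The paper phrases it by identifying $\calO(\ol n)^t_m\cong M_{p_m}(T)$ with $T$ the centralizer of the matrix units and computing $T=K[t^{p_m},t^{-p_m}]$, while you construct the inverse map explicitly and verify injectivity via the $\Z$-grading (a step the paper leaves implicit), but the content is the same.
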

\begin{proof}
For $0 \leq i,j < p_m$, the elements $e_{ij}^{(m)} := e_{ij}(E_m)$ form a complete system of matrix units inside $\calO(\ol{n})^t_m$, so there is an isomorphism $\calO(\ol{n})^t_m \cong M_{p_m}(T)$ with $T$ being the centralizer of the family $\{e_{ij}^{(m)} \mid 0 \leq i,j < p_m\}$ in $\calO(\ol{n})^t_m$. The isomorphism is given explicitly by
$$a \mapsto \sum_{i,j=0}^{p_m-1} a_{ij} e_{ij}^{(m)}, \quad \text{ with } a_{ij} = \sum_{k=0}^{p_m-1} e_{ki}^{(m)} \cdot a \cdot e_{jk}^{(m)} \in T,$$
which is also a $*$-isomorphism. Since $t^{p_m} e_{ij}^{(m)} t^{-p_m} = e_{ij}^{(m)}$ and 
$$t = \sum_{i=0}^{p_m-1} t e_{ii}^{(m)} = \sum_{i=0}^{p_m-2} e_{i+1,i}^{(m)} + t^{p_m} e_{0,p_m-1}^{(m)} \in M_{p_m}(K[t^{p_m},t^{-p_m}]),$$
we deduce that $T = K[t^{p_m},t^{-p_m}]$, as desired.
\end{proof}
The inclusion $\calO(\ol{n})^t_m \subseteq \calO(\ol{n})^t_{m+1}$ translates to an embedding from $M_{p_m}(K[t^{p_m},t^{-p_m}])$ to \linebreak
 $M_{p_{m+1}}(K[t^{p_{m+1}},t^{-p_{m+1}}])$ that extends the previous one $M_{p_m}(K) \hookrightarrow M_{p_{m+1}}(K)$, and sends the element $t^{p_m} \cdot \text{Id}_{p^m}$ to the element
$$\left( \begin{array}{@{}c@{}}
					\begin{matrix}
						\bigzero_{p_m} & & & \bigzero_{p_m} & t^{p_{m+1}} \cdot \text{Id}_{p_m} \\
						\text{Id}_{p_m}	& \bigzero_{p_m} & & & \bigzero_{p_m} \\
						 & \ddots & \ddots \stackinset{c}{-.1in}{c}{.15in}{\footnotesize$n_{m+1}$}{} & & \\
						 & & \ddots & \bigzero_{p_m} & \\
						\hspace*{0.2cm} \bigzero_{p_m} & & & \text{Id}_{p_m} & \bigzero_{p_m}
					\end{matrix}											
				 \end{array}\right).$$

\begin{lemma}\label{lemma-charac.An.t.2}
$\calO(\ol{n})$ is $*$-isomorphic to the direct limit $\varinjlim_m M_{p_m}(K[t^{p_m},t^{-p_m}])$ with respect to the previous embeddings.
\end{lemma}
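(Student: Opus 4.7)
The plan is to identify $\calO(\ol{n})$ with the ascending union $\bigcup_{m\ge 1} \calO(\ol{n})^t_m$ and then invoke Lemma \ref{lemma-charac.An.t} together with the compatibility of the embeddings already spelled out just before this lemma.

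First I would observe that $\calO(\ol{n})_m \subseteq \calO(\ol{n})_{m+1}$ (the partition $\calP_{m+1}\cup\{E_{m+1}\}$ refines $\calP_m\cup\{E_m\}$, so each generator $\chi_Z t$ with $Z \in \calP_m$ equals a finite sum of generators of $\calO(\ol{n})_{m+1}$), hence $\calO(\ol{n})^t_m\subseteq \calO(\ol{n})^t_{m+1}$. This makes $(\calO(\ol{n})^t_m)_{m\ge 1}$ an ascending chain of unital $*$-subalgebras of $\calO(\ol{n})$, so its union $\calS := \bigcup_{m\ge 1}\calO(\ol{n})^t_m$ is a unital $*$-subalgebra of $\calO(\ol{n})$, and it is naturally a direct limit of the $\calO(\ol{n})^t_m$ with respect to the inclusions.

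Next I would show that $\calS = \calO(\ol{n})$. Since $\calO(\ol{n}) = C_K(X)\rtimes_T \Z$ is generated as a $*$-algebra by $C_K(X)$ and $t$, it suffices to prove that $C_K(X)\subseteq \calS$. Every element of $C_K(X)$ is a finite $K$-linear combination of characteristic functions of cylinder sets, so we only need to check that $\chi_{[a_1\cdots a_r]}\in \calO(\ol{n})^t_m$ for some $m$. I would take $m\ge r$; then $[a_1\cdots a_r]$ is a finite disjoint union of cylinders of length $m$, and for each such $Z$ of length $m$ we have $\chi_Z\in \calO(\ol{n})_m$: indeed, if $Z\in\calP_m$ then $\chi_Z = (\chi_Z t)(\chi_Z t)^* \in \calO(\ol{n})_m$, while $\chi_{E_m} = 1-\sum_{Z\in\calP_m}\chi_Z \in \calO(\ol{n})_m$. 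Hence $\chi_{[a_1\cdots a_r]}\in \calO(\ol{n})_m\subseteq \calO(\ol{n})^t_m\subseteq \calS$, and of course $t\in \calS$, so $\calS=\calO(\ol{n})$.

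Finally, by Lemma \ref{lemma-charac.An.t} there are $*$-isomorphisms $\Phi_m\colon \calO(\ol{n})^t_m \xrightarrow{\ \cong\ } M_{p_m}(K[t^{p_m},t^{-p_m}])$ sending $e^{(m)}_{ij}\mapsto e_{ij}$ and $t^{p_m}\mapsto t^{p_m}\cdot \operatorname{Id}_{p_m}$. The only thing left to check is that these isomorphisms intertwine the inclusion $\calO(\ol{n})^t_m\hookrightarrow \calO(\ol{n})^t_{m+1}$ with the block-diagonal/cyclic embedding described immediately before the lemma; this is a direct computation using $e^{(m)}_{ij} = \sum_{k=0}^{n_{m+1}-1} e^{(m+1)}_{i+kp_m,\, j+kp_m}$ together with the identity $t = \sum_{i=0}^{p_{m+1}-2} e^{(m+1)}_{i+1,i} + t^{p_{m+1}}e^{(m+1)}_{0,p_{m+1}-1}$, from which one reads off that $t^{p_m}\cdot\operatorname{Id}_{p_m}\in \calO(\ol{n})^t_m$ maps precisely to the displayed matrix in $M_{p_{m+1}}(K[t^{p_{m+1}},t^{-p_{m+1}}])$. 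Passing to the direct limit then yields
\[
\calO(\ol{n}) \;=\; \varinjlim_m \calO(\ol{n})^t_m \;\cong\; \varinjlim_m M_{p_m}(K[t^{p_m},t^{-p_m}]).
\]
The only real obstacle is the bookkeeping in the last step (verifying that $\Phi_{m+1}$ restricted to $\calO(\ol{n})^t_m$ agrees with the stated embedding), and this is pinned down by the explicit expression of $t^{p_m}\cdot \operatorname{Id}_{p_m}$ as a word in the $e^{(m+1)}_{ij}$'s and $t^{p_{m+1}}$.
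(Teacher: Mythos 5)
Your proof is correct and follows essentially the same approach as the paper: identify $\calO(\ol{n})$ with the ascending union of the $\calO(\ol{n})^t_m$ and apply Lemma \ref{lemma-charac.An.t}. The only cosmetic difference is that the paper outsources the inclusion $C_K(X)\subseteq\bigcup_m\calO(\ol{n})^t_m$ to Remark \ref{remark-algebra.Ainfty} (which invokes \cite[Lemma 4.3]{AC}), whereas you re-derive it directly from the fact that each $\chi_Z$, for $Z$ a cylinder of length $m$, equals $(\chi_Z t)(\chi_Z t)^*$ or the complementary sum; both routes amount to the same observation.
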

\begin{proof}
By Remark \ref{remark-algebra.Ainfty} the algebra $\calO(\ol{n})_{\infty}$ contains $C_K(X)$, hence the $*$-subalgebra of $\calO(\ol{n})$ generated by $t$ and $\calO(\ol{n})_{\infty}$ is $\calO(\ol{n})$ itself. Now each $\calO(\ol{n})_m$ sits inside $\calO(\ol{n})^t_m$, hence $\calO(\ol{n})_{\infty} = \varinjlim_m \calO(\ol{n})_m \subseteq \varinjlim_m \calO(\ol{n})^t_m$. But $t \in \varinjlim_m \calO(\ol{n})^t_m$ already, so $\calO(\ol{n}) = \varinjlim_m \calO(\ol{n})^t_m \cong \varinjlim_m M_{p_m}(K[t^{p_m},t^{-p_m}])$ by Lemma \ref{lemma-charac.An.t}.
\end{proof}

We are now ready to compute $\calR_{\calO(\ol{n})}$.

\begin{theorem}\label{theorem-reg.closure.odometer}
There is a $*$-isomorphism $\calR_{\calO(\ol{n})} \cong \varinjlim_m M_{p_m}(K(t^{p_m}))$, where the maps $M_{p_m}(K(t^{p_m})) \hookrightarrow M_{p_{m+1}}(K(t^{p_{m+1}}))$ are induced by the inclusions of matrices over Laurent polynomial rings.
\end{theorem}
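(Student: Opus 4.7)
The strategy is to combine the identification $\calO(\ol{n})\cong \varinjlim_m M_{p_m}(K[t^{p_m},t^{-p_m}])$ from Lemma \ref{lemma-charac.An.t.2} with a level-by-level computation of $*$-regular closures in $\gotR_{\rk}$. Concretely, the plan is to establish that $\calR(\calO(\ol{n})^t_m,\gotR_{\rk})=M_{p_m}(K(t^{p_m}))$ for each $m\ge 1$, and then to pass to the direct limit.

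The heart of the argument is the claim that every nonzero $f\in K[t^{p_m},t^{-p_m}]\subseteq \gotR_{\rk}$ is a unit. By Theorem \ref{theorem-rank.function}, $\pi_{\rk}(t)$ is the rank-limit of $N_n:=\pi_n(\chi_{X\setminus E_n}t)\in \gotR_n=M_{p_n}(K)$, and a direct computation shows that $N_n$ is the lower nilpotent shift matrix. Continuity of multiplication in the rank metric then yields $\pi_{\rk}(t^{p_m})=\lim_n N_n^{p_m}$. For any polynomial $g\in K[z]$ with $g(0)\neq 0$ and $n\ge m$, the matrix $g(N_n^{p_m})$ is lower triangular with $g(0)$ on its diagonal, hence invertible in $M_{p_n}(K)$ with normalized rank $1$; continuity of the rank along the sequence forces $\rk(g(\pi_{\rk}(t^{p_m})))=1$, and in the $*$-regular ring $\gotR_{\rk}$ (whose rank is faithful) this is equivalent to invertibility. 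A general nonzero $f\in K[z,z^{-1}]$ factors as $f(z)=z^{-a}g(z)$ with $a\in \Z_{\ge 0}$ and $g(0)\ne 0$, whence $f(\pi_{\rk}(t^{p_m}))=\pi_{\rk}(t)^{-ap_m}g(\pi_{\rk}(t^{p_m}))$ is a product of two units in $\gotR_{\rk}$.

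From this invertibility I deduce that $\calR(K[t^{p_m},t^{-p_m}],\gotR_{\rk})=K(t^{p_m})$. Indeed, the field of fractions embeds in $\gotR_{\rk}$ because every nonzero element of the Laurent polynomial ring is a unit there, it is $*$-regular since it is a field, and any regular subring $R\subseteq \gotR_{\rk}$ containing $K[t^{p_m},t^{-p_m}]$ must contain the inverse of every nonzero $a\in K[t^{p_m},t^{-p_m}]$ (regularity supplies $b\in R$ with $aba=a$, and invertibility of $a$ in $\gotR_{\rk}$ forces $b=a^{-1}$). A routine corner argument with matrix units from $M_{p_m}(K[t^{p_m},t^{-p_m}])\subseteq M_{p_m}(\gotR_{\rk})$ upgrades this to $\calR(M_{p_m}(K[t^{p_m},t^{-p_m}]),\gotR_{\rk})=M_{p_m}(K(t^{p_m}))$, and the isomorphism of Lemma \ref{lemma-charac.An.t} identifies the left-hand side with $\calR(\calO(\ol{n})^t_m,\gotR_{\rk})$.

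Finally, $\calO(\ol{n})=\bigcup_m\calO(\ol{n})^t_m$ lies in the increasing chain of $*$-regular subrings $M_{p_1}(K(t^{p_1}))\subseteq M_{p_2}(K(t^{p_2}))\subseteq\cdots$ of $\gotR_{\rk}$, with inclusions induced by those of the Laurent polynomial rings described after Lemma \ref{lemma-charac.An.t}. The union of a directed family of $*$-regular subrings of a fixed $*$-regular ring is again $*$-regular, so minimality of the $*$-regular closure gives $\calR_{\calO(\ol{n})}=\varinjlim_m M_{p_m}(K(t^{p_m}))$. The principal technical obstacle in this scheme is the invertibility statement that drives the central step; everything else is then formal.
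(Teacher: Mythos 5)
Your argument is correct, and it takes the same overall route as the paper's proof: identify $\calO(\ol{n})^t_m\cong M_{p_m}(K[t^{p_m},t^{-p_m}])$ inside $\gotR_{\rk}$, show that passing to the $*$-regular closure promotes the Laurent polynomial coefficient ring to $K(t^{p_m})$, and then conclude by a direct-limit / minimality argument. The one genuinely different ingredient is how you establish the key fact that nonzero elements of $K[t^{p_m},t^{-p_m}]$ become units in $\gotR_{\rk}$. The paper dispatches this by citing an external lemma (Lemma 4.4 of the companion paper), which asserts that the rational function field $K(t^{p_m})$ already sits inside $\calR_{\calO(\ol{n})}$. You instead prove the invertibility directly: $\pi_{\rk}(t)$ is the rank-limit of the strictly lower nilpotent shifts $N_n\in M_{p_n}(K)$, so for a polynomial $g$ with $g(0)\ne 0$ each $g(N_n^{p_m})$ is lower triangular with nonzero constant diagonal and hence has normalized rank $1$; rank-continuity then forces $\rk(g(\pi_{\rk}(t^{p_m})))=1$, which in a $*$-regular ring with faithful rank is the same as invertibility; the general Laurent case follows by factoring out a power of $t$. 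This is a clean, self-contained replacement for the cited lemma, and in fact proves the slightly stronger statement that $\calR(K[t^{p_m},t^{-p_m}],\gotR_{\rk})=K(t^{p_m})$ rather than just an inclusion. The corner/matrix-unit reduction and the final passage through the increasing chain of $*$-regular subrings are formal and match the paper's intent; one small thing worth spelling out is that $e_{00}^{(m)}$ commutes with $\pi_{\rk}(t^{p_m})$ (since $t^{p_m}$ is central in $\calO(\ol{n})^t_m$), so the invertibility of $g(\pi_{\rk}(t^{p_m}))$ in $\gotR_{\rk}$ transfers to invertibility of $g(\pi_{\rk}(t^{p_m}))e_{00}^{(m)}$ in the corner $e_{00}^{(m)}\gotR_{\rk}e_{00}^{(m)}$ where the actual $*$-regular closure computation happens.
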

\begin{proof}
We have embeddings $\calO(\ol{n})^t_m \hookrightarrow \calO(\ol{n}) \hookrightarrow \calR_{\calO(\ol{n})} \hookrightarrow \gotR_{\rk}$. By \cite[Lemma 4.4]{AC2}, the rational function field $K(t^{p_m})$ sits inside $\calR_{\calO(\ol{n})}$. Hence there is, for each $m \geq 1$, a commutative diagram
\begin{equation*}
\xymatrix{
	\calO(\ol{n})^t_m \ar@{^{(}->}[r] \ar@{^{(}->}[d] & M_{p_m}(K(t^{p_m})) \ar@{^{(}->}[r] & \calR_{\calO(\ol{n})} \ar@{}[d]|*=0[@]{=} \\
	\calO(\ol{n})^t_{m+1} \ar@{^{(}->}[r] & M_{p_{m+1}}(K(t^{p_{m+1}})) \ar@{^{(}->}[r] & \calR_{\calO(\ol{n})}
}\label{diagram-comm.diag.5}
\end{equation*}
It is straightforward to prove that for any non-zero element $q(t) \in K[t^{p_m},t^{-p_m}]$, the corresponding matrix in $M_{p_{m+1}}(K[t^{p_{m+1}}, t^{-p_{m+1}}])$ becomes invertible in $M_{p_{m+1}}(K(t^{p_{m+1}}))$. Therefore the embedding $\calO(\ol{n})^t_m \hookrightarrow \calO(\ol{n})^t_{m+1}$ extends uniquely to an embedding $M_{p_m}(K(t^{p_m})) \hookrightarrow M_{p_{m+1}}(K(t^{p_{m+1}}))$, and 
the previous commutative diagrams give embeddings
\begin{equation*}\label{equation-comm.diag.6}
\calO(\ol{n}) \cong \varinjlim_m \calO(\ol{n})^t_m \hookrightarrow \varinjlim_m M_{p_m}(K(t^{p_m})) \hookrightarrow \calR_{\calO(\ol{n})}.
\end{equation*}
But $\varinjlim_m M_{p_m}(K(t^{p_m}))$ is already $*$-regular and contains $\calO(\ol{n})$, so necessarily $\calR_{\calO(\ol{n})} \cong \varinjlim_m M_{p_m}(K(t^{p_m}))$, as required.
\end{proof}

In particular, since $\varinjlim_m M_{p_m}(K(t^{p_m}))$ has a unique Sylvester matrix rank function, it coincides with the one given by Theorem \ref{theorem-rank.function} under the previous $*$-isomorphism.

\subsection{Characterizing the set \texorpdfstring{$\calC'(\calO(\ol{n}))$}{}}\label{subsection-odometer.alg.betti.no}

We characterize the set $\calC'(\calO(\ol{n}))$ of all positive real values that the Sylvester matrix rank function $\rk_{\calO(\ol{n})}$ can achieve, and the subgroup $\calG(\calO(\ol{n}))$ it generates. First, let
$$\calC'(\calR_{\calO(\ol{n})}) := \rk_{\calR_{\calO(\ol{n})}}\Big( \bigcup_{k \geq 1} M_k(\calR_{\calO(\ol{n})}) \Big) \subseteq \R^+$$
and note that $\calC'(\calO(\ol{n})) \subseteq \calC'(\calR_{\calO(\ol{n})})= \calC(\calR_{\calO(\ol{n})}) $ (see Definition \ref{def:semigroupsCandCprime} and Remark \ref{rem:semigroupsCandCprime}). The following definition will be essential.

\begin{definition}\label{definition-supernatural.no}
For each sequence $\ol{n} = (n_1,n_2,...)$ of positive integers $n_i \geq 2$, one may associate to it the \textit{supernatural number}
$$n = \prod_{i \in \N} n_i = \prod_{q \in \P}q^{\varepsilon_q(n)},$$
where $\P$ is the set of prime numbers ordered with respect to the natural ordering, and each $\varepsilon_q(n) \in \{0\} \cup \N \cup \{\infty\}$. In more detail, if $n_i = \prod_{q \in \P}q^{\varepsilon_q(n_i)}$ is the prime decomposition of $n_i$, then, for each $q\in \P$,  $\varepsilon_q (n)$ is defined by
$\varepsilon_q (n) = \sum_{i=1}^{\infty} \varepsilon _q (n_i) \in  \{0\} \cup \N \cup \{\infty\}$, and $n$ is defined as the formal product $\prod_{q \in \P}q^{\varepsilon_q(n)}$.  
\end{definition}

As in \cite[Definition 7.4.2]{LLR}, from any supernatural number $n$ one can construct an additive subgroup of $\Q$ containing $1$, denoted by $\Z(n)$, consisting of those fractions $\frac{a}{b}$ with $a \in \Z$, and $b \in \Z \backslash \{0\}$ being of the form
$$b = \prod_{q \in \P} q^{\varepsilon_q(b)},$$
where $\varepsilon_q(b) \leq \varepsilon_q(n)$ for all $q \in \P$, and $\varepsilon_q(b) = 0$ for all but finitely many $q$'s. If $n$ comes from a sequence $\ol{n} = (n_1,n_2,...)$ as above, $\Z(n)$ is exactly the additive subgroup of $\Q$ consisting of those fractions of the form
$$\frac{a}{n_1 \cdots n_r}, \quad \text{ with } a \in \Z \text{ and } r \geq 1.$$
Each group $\Z (n)$ gives rise to a subgroup $\mathbb G (n)=\Z(n)/\Z$  of the group $\Q/\Z$. Indeed the groups $\mathbb G (n)$ describe all the subgroups of $\Q/\Z$. These groups are called \textit{Pr\"ufer groups}.

\begin{theorem}\label{theorem-betti.numbers.odometer}
We have $\calC'(\calO(\ol{n})) = \calC(\calO(\ol{n})) = \calC'(\calR_{\calO(\ol{n})}) = \Z(n)^+$. In particular, $\calG(\calO(\ol{n})) = \Z(n)$.
\end{theorem}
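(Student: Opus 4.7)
The plan is to pin down the set of rank values on $\calR_{\calO(\ol{n})}$ via its explicit description from Theorem \ref{theorem-reg.closure.odometer}, and then verify that every such value is already realized inside $\calO(\ol{n})$. I would first note that $\calR_{\calO(\ol{n})}$ is $*$-regular, so by the comments in Remark \ref{rem:semigroupsCandCprime} we automatically get $\calC'(\calR_{\calO(\ol{n})})=\calC(\calR_{\calO(\ol{n})})$, and also that the inclusions $\calO(\ol{n})\hookrightarrow \calR_{\calO(\ol{n})}\hookrightarrow \gotR_{\rk}$ all respect the canonical rank function (by Theorem \ref{theorem-rank.function} and the construction of $\calR_{\calO(\ol{n})}$). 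Thus both $\calC'(\calO(\ol{n}))$ and $\calC(\calO(\ol{n}))$ are contained in $\calC'(\calR_{\calO(\ol{n})})$, and the whole theorem reduces to the two containments $\calC'(\calR_{\calO(\ol{n})})\subseteq \Z(n)^+$ and $\Z(n)^+\subseteq \calC'(\calO(\ol{n}))\cap \calC(\calO(\ol{n}))$.

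For the upper bound, I would use Theorem \ref{theorem-reg.closure.odometer} to identify $\calR_{\calO(\ol{n})}\cong \varinjlim_m M_{p_m}(K(t^{p_m}))$. Each $M_{p_m}(K(t^{p_m}))$ is simple Artinian, hence admits a unique Sylvester matrix rank function; by compatibility with the unique extension of the rank function from $\calO(\ol{n})_m\cong M_{p_m}(K)$ (where the rank of the idempotent $\chi_{E_m}$ equals $\mu(E_m)=1/p_m$), this unique rank is $\frac{1}{p_m}\Rk$, with $\Rk$ the usual matrix rank over $K(t^{p_m})$. Any $A\in M_k(\calR_{\calO(\ol{n})})$ lies, for some $m$, in $M_k(M_{p_m}(K(t^{p_m})))=M_{kp_m}(K(t^{p_m}))$, so $\rk(A)\in\frac{1}{p_m}\Z^+\subseteq \Z(n)^+$. (The mild check here is that the transition maps $M_{p_m}(K(t^{p_m}))\hookrightarrow M_{p_{m+1}}(K(t^{p_{m+1}}))$ in Theorem \ref{theorem-reg.closure.odometer} really do preserve the normalized rank, which is forced by uniqueness on the target.)

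For the lower bound, given $\alpha=\frac{a}{p_m}\in \Z(n)^+$, I would write $a=qp_m+s$ with $0\le s<p_m$, fix any clopen set $F\subseteq X$ that is a disjoint union of $s$ atoms of $\calP_m\cup\{E_m\}$ (so $\mu(F)=s/p_m$), and form the idempotent
\[
P \;=\; \operatorname{diag}(1,\ldots,1,\chi_F)\;\in\; M_{q+1}(\calO(\ol{n})),
\]
with $q$ ones on the diagonal. The characterizing identity \eqref{equation-unique.property} together with additivity of $\rk_{\calO(\ol{n})}$ on orthogonal idempotents gives $\rk_{\calO(\ol{n})}(P)=q+s/p_m=a/p_m$, so $\alpha\in\calC'(\calO(\ol{n}))$. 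To put $\alpha$ in $\calC(\calO(\ol{n}))$ as well, I would pick any integer $k\ge \alpha$, observe that $k-\alpha=\frac{kp_m-a}{p_m}$ again belongs to $\Z(n)^+$, and repeat the construction to obtain an idempotent $Q\in M_k(\calO(\ol{n}))$ of rank $k-\alpha$; then $b^{\calO}(Q)=k-\rk(Q)=\alpha$.

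Combining, $\Z(n)^+\subseteq \calC(\calO(\ol{n}))\cap \calC'(\calO(\ol{n}))\subseteq \calC'(\calR_{\calO(\ol{n})})\subseteq \Z(n)^+$, giving the chain of equalities, and passing to the generated subgroup yields $\calG(\calO(\ol{n}))=\Z(n)$. I do not expect any step to be a serious obstacle: the only point that requires a small argument rather than a bare invocation is the identification of the canonical rank on $M_{p_m}(K(t^{p_m}))$ with $\frac{1}{p_m}\Rk$, which follows from uniqueness of Sylvester matrix rank functions on simple Artinian rings together with the known values on the matrix units $e_{ij}^{(m)}=e_{ij}(E_m)$.
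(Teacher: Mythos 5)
Your proof is correct and follows the same overall architecture as the paper's: show $\calC'(\calR_{\calO(\ol{n})}) \subseteq \Z(n)^+$ using the explicit description of $\calR_{\calO(\ol{n})}$ from Theorem~\ref{theorem-reg.closure.odometer}, then realize every element of $\Z(n)^+$ by exhibiting suitable idempotents in matrix algebras over $\calO(\ol{n})$. The one place where you take a different route is the upper bound. The paper first replaces an arbitrary matrix over $\calR_{\calO(\ol{n})}$ by an equivalent projection (using $*$-regularity), then diagonalizes that projection (citing Goodearl, Prop.\ 2.10), and only then reads off the rank from the direct limit description; your version skips the reduction to projections entirely and instead notes that a $k\times k$ matrix over $\varinjlim_m M_{p_m}(K(t^{p_m}))$ has all its finitely many entries in some fixed $M_{p_m}(K(t^{p_m}))$, hence lies in $M_{kp_m}(K(t^{p_m}))$, whose unique Sylvester matrix rank function is $\frac{1}{p_m}\Rk$, forcing $\rk(A)\in\frac{1}{p_m}\Z^+$. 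This is a genuine shortcut: it trades the projection--diagonalization machinery for the (also standard) uniqueness of the Sylvester rank on a simple Artinian ring, and is arguably more economical; the paper's version mirrors the argument of \cite[Prop.\ 4.1]{AC2} and is stated in terms of projections because that is the viewpoint used throughout \cite{AC2}. Your lower bound is the same observation the paper makes (that $\rk(e_{ii}^{(m)})=1/p_m$), just spelled out more explicitly, and you correctly handle the extra step of putting $\alpha$ into $\calC(\calO(\ol{n}))$ via the complementary rank $k-\alpha$ (you should pad the resulting idempotent with zero blocks so it actually sits in $M_k(\calO(\ol{n}))$, but that is a cosmetic point).
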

\begin{proof}
The argument is similar to the one given in the proof of \cite[Proposition 4.1]{AC2}. Since $\calR_{\calO(\ol{n})}$ is a $*$-regular ring with positive definite involution, each matrix algebra $M_k(\calR_{\calO(\ol{n})})$ is also $*$-regular. Hence for each $A \in M_k(\calR_{\calO(\ol{n})})$ there exists a projection $P \in M_k(\calR_{\calO(\ol{n})})$ such that $\rk_{\calR_{\calO(\ol{n})}}(A) = \rk_{\calR_{\calO(\ol{n})}}(P)$. We conclude that $\calC'(\calR_{\calO(\ol{n})})$ equals the set of positive real numbers of the form $\rk_{\calR_{\calO(\ol{n})}}(P)$, where $P$ ranges over matrix projections with coefficients in $\calR_{\calO(\ol{n})}$. Now each such projection $P$ is equivalent to a diagonal projection \cite[Proposition 2.10]{Goo91}, that is of the form $\text{diag}(p_1,p_2,...,p_r)$ for some projections $p_1,...,p_r \in \calR_{\calO(\ol{n})}$, 
so that $\rk_{\calR_{\calO(\ol{n})}}(P) = \rk_{\calR_{\calO(\ol{n})}}(p_1) + \cdots + \rk_{\calR_{\calO(\ol{n})}}(p_r)$. But since $\calR_{\calO(\ol{n})} \cong \varinjlim_m M_{p_m}(K(t^{p_m}))$ by Theorem \ref{theorem-reg.closure.odometer}, the set of ranks of elements in $\calR_{\calO(\ol{n})}$ is contained in $\Z(n)^+ \cap [0,1]$. Therefore $\rk_{\calR_{\calO(\ol{n})}}(P) \in \Z(n)^+$. This proves the inclusion $\calC'(\calR_{\calO(\ol{n})}) \subseteq \Z(n)^+$. The inclusion $\Z(n)^+ \subseteq \calC'(\calO(\ol{n}))$ is straightforward, since for $0 \leq i < p_m$ we have $\rk_{\calO(\ol{n})}(e_{ii}^{(m)}) = \frac{1}{p_m}$. The equality 
$ \calC (\calO(\ol{n}))= \Z (n)^+$ also follows from the above.

The last part of the theorem is immediate.
\end{proof}

In a sense, Theorem \ref{theorem-betti.numbers.odometer} confirms the SAC for a class of crossed product algebras, as follows:

\begin{remark}\label{rem:fourierforX}
Let $n$ be a supernatural number and let $n= \prod_{i\in \N} n_i$ a decomposition of $n$, as above. Consider the presentation of $\mathbb G (n)$ given by generators $\{g_m \}_{m\in \N}$ and relations $g_1^{n_1}= 1$ and $g_{m+1}^{n_{m+1}} = g_m$ for all $m\ge 1$. The topological group $X(n)=\prod_{i\in \N} X_i$ is naturally isomorphic with the Pontryagin dual $\widehat{\mathbb G (n)}$ of the Pr\"ufer group $\mathbb G (n)$. Indeed, let $\xi_{p_m}$ denote the canonical primitive $p_m$-root of unity in $\C$, where $p_m= n_1\cdots n_m$, and observe that $\xi_{p_{m+1}}^{m_{n+1}}= \xi _{p_m}$ for each $m\in \N$. The map $\phi \colon X(n)\to \widehat{\mathbb G (n)}$ defined by
$$\phi_x (g_m)= \xi_{p_m}^{a_1+a_2n_1+\cdots + a_mn_{m-1}}, $$
where $x= (a_1,a_2,\dots )\in X(n)$, is a group isomorphism and a homeomorphism. Therefore if $K$ is a subfield of $\C$ closed under complex conjugation containing all $p_m^{\text{th}}$ roots of unity, Fourier transform defines an isomorphism
$$\mathcal F \colon K[\mathbb G(n)] \longrightarrow  C_K(X(n)) ,$$
and we can pull back the automorphism induced by $T$ on $C_K(X(n))$ to an automorphism $\rho$ on $K[\mathbb G (n)]$ (which is {\it not} induced by an automorphism of $\mathbb G (n)$). Note that $\rho (g_m)= \xi_{p_m} g_m\in K\cdot \mathbb G (n)$ for all $m\ge 1$. Therefore we can interpret Theorem \ref{theorem-betti.numbers.odometer} as giving a positive answer to the SAC for the crossed product $\calA = K[\mathbb G (n)]\rtimes_{\rho}\Z\cong \calO (n)$: the set of $\calA$-Betti numbers is exactly the semigroup $\Z(n)^+$ generated by the inverses of the orders of the elements of $\mathbb G (n)$. Note finally that the groups $X(n)$, $n$ a supernatural, give all the profinite completions of $\Z$. With this view, the dynamical system considered on $X(n)$ is generated by addition by $1$.
\end{remark}

\section*{Acknowledgments}

The authors would like to thank the anonymous referees for their very careful reading of the manuscript and for their many suggestions, which have improved the exposition of the paper.

\section*{Data availability statement}

Data sharing not applicable to this article as no datasets were generated or analysed during the current study.

\end{document}